\documentclass[11pt, a4paper]{article}\usepackage{amsfonts}
\usepackage{amsmath}
\usepackage{amsthm}
\usepackage{amscd}
\usepackage{amsmath}
\usepackage{amssymb}
\usepackage{amscd}
\usepackage{amsfonts}
\usepackage{amsbsy}
\usepackage{graphicx}
\usepackage{booktabs}
\usepackage{makecell}
\usepackage{enumitem}
\usepackage{tikz}
\usepackage{booktabs}
\usepackage{multirow}
\usepackage{diagbox}
\usepackage{appendix}
\usepackage{mathrsfs}
\usepackage{amsmath,amsfonts,amsthm,amssymb,amscd,color,xcolor}
\usepackage{graphicx,amscd,mathrsfs,wrapfig,mathrsfs,lipsum}
\usepackage{microtype}
\usepackage{float}
\usepackage{tikz}
\usepackage{multicol}
\usepackage{caption}
\usetikzlibrary{arrows}
\usepackage{hyperref}
\usepackage{comment}

\usepackage{hyperref}

\hypersetup{colorlinks=true,citecolor=blue,%
        linkcolor=blue,urlcolor=blue,bookmarksnumbered=true,
        bookmarksopen=true,bookmarksopenlevel=1,breaklinks=true}


\newcommand{\R}{\ensuremath{\mathbb{R}}}

\newcommand{\N}{\ensuremath{\mathbb{N}}}

\newcommand{\Z}{\ensuremath{\mathbb{Z}}}

\newcommand{\la}{\lambda}

\newcommand{\e}{\varepsilon}
\def\epsilon{\varepsilon}
\textheight=615pt \textwidth=360pt
\newcommand{\supp}{\mathop{\rm supp}\nolimits}
\newcommand{\leb}{\mathop{\rm Leb}\nolimits}

\newcommand{\dd}{{\textup d}}
\newcommand{\Id}{{\textup{Id}}}

\newcommand{\pP}{{\mathbb P}}

\newcommand{\PPPP}{{\mathfrak P}}
\newcommand{\Gl}{{\textup{GL}}}
\newcommand{\Sl}{{\textup{SL}}}
\newcommand{\Tv}{{\textup{TV}}}
\newcommand{\M}{{\textup{M}}}
\newcommand{\bOmega}{{\bf {\Omega}}}
\newcommand{\bomega}{{\boldsymbol{\omega}}}
\newcommand{\bFF}{{\boldsymbol{\mathcal{F}}}}
\newcommand{\bpP}{{{\textup{\bf P}}}}
\newcommand{\dett}{{\textup{det}}}
\newcommand{\TTT}{{\textup{T}}}
\newcommand{\XX}{\mathcal{X}}
\newcommand{\OO}{{\cal O}}
\newcommand{\NN}{{\cal N}}

\newcommand{\Leb}{\mathop{\mathrm{Leb}}\nolimits}
\newcommand{\ZZ}{{\cal Z}}
\newcommand{\dist}{\mathop{\rm dist}\nolimits}

\newtheorem*{mt}{Main Theorem}
\newtheorem{theorem}{Theorem}[section]
\newtheorem{proposition}[theorem]{Proposition}
\newtheorem{lemma}[theorem]{Lemma}

\newtheorem{corollary}[theorem]{Corollary}

\theoremstyle{definition}
\newtheorem{definition}[theorem]{Definition}

\numberwithin{equation}{section}
\renewcommand{\div}{{\mathrm{div}}}

\numberwithin{figure}{section}
\def\9{{\infty}}

\def\bbp{{\mathbb{P}}}
\def\bbr{{\mathbb{R}}}
\def\bbt{{\mathbb{T}}}

\def\na{{\nabla}}
\def\ve{{\varepsilon}}

\def\hH{{\mathscr{H}}}
\def\mM{{\mathscr{M}}}
\def\vV{{\mathscr{V}}}
\def\eE{{\mathscr{E}}}

\newcommand{\lag}{\langle}
\newcommand{\rag}{\rangle}

\def\wt{\widetilde}
\def\ol{\overline}
\def\wh{\widehat}
\def\ol{\overline}
\def\({\left(}
\def\){\right)}
\def\<{\langle}
\def\>{\rangle}


\newcommand{\ty}{\infty}
\newcommand{\LL}{{\mathcal{L}}}
\newcommand{\E}{{\mathbb{E}}}
\newcommand{\aA}{{\mathcal{A}}}


\def\DD{{\mathcal{D}}}
\def\HH{{\mathcal{H}}}
\def\TT{{\mathcal{T}}}
\def\PP{{\mathcal{P}}}
\def\KK{{\mathcal{K}}}
\def\VV{{\mathcal{V}}}

\newcommand{\FFFF}{{\mathfrak F}}

\setcounter{tocdepth}{1} 

\colorlet{darkblue}{blue!50!black}

\hypersetup{
    colorlinks,%
    citecolor=blue,%
    filecolor=red,%
    linkcolor=darkblue,%
    urlcolor=blue,%
    pdfnewwindow=true,%
    pdfstartview={FitH}
}

\begin{document}

\author{Vahagn~Nersesyan\,\footnote{NYU-ECNU Institute of Mathematical Sciences at NYU Shanghai, 3663 Zhongshan Road North, Shanghai, 200062, China, e-mail: \href{mailto:vahagn.nersesyan@nyu.edu}{Vahagn.Nersesyan@nyu.edu}}         \and 
  Deng Zhang\,\footnote{School of mathematical sciences, Shanghai Jiao Tong University, 200240, Shanghai, China, e-mail: \href{mailto:dzhang@sjtu.edu.cn}{dzhang@sjtu.edu.cn}}
       \and Chenwan Zhou\,\footnote{School of mathematical sciences, Shanghai Jiao Tong University, 200240, Shanghai, China; Department of Mathematical Sciences, Durham University, DH1 3LE, UK, e-mail: \href{mailto:daydayupzcw@sjtu.edu.cn}{daydayupzcw@sjtu.edu.cn}}
}

 \date{\today}

\title{On the chaotic behavior of the Lagrangian flow of the 2D Navier--Stokes system with bounded degenerate noise}
\date{\today}
\maketitle

\begin{abstract}

We consider a fluid governed by the randomly forced 2D Navier--Stokes system. It is assumed that the force is bounded, acts directly only on a small number of Fourier modes, and satisfies some natural decomposability and observability properties. Under these assumptions, we show that the Lagrangian flow associated with the random fluid exhibits chaotic behavior characterized by the strict positivity of the top Lyapunov exponent. To achieve this, we introduce a new abstract result that allows to derive positivity of the top Lyapunov exponent from controllability properties of the underlying deterministic system.

\medskip
\noindent
{\bf AMS subject classification:} 35Q30, 37A50, 37H15, 76F20, 93B05

\medskip
\noindent
{\bf Keywords:} Lagrangian chaos, top Lyapunov exponent, Navier--Stokes system, degenerate noise, mixing, Furstenberg's criterion, controllability

\end{abstract}

 \tableofcontents

\section{Introduction}\label{Sec-Itro-Main}

 The study of chaotic behavior of dynamical systems has been an important topic in recent decades. It is widely accepted that chaos is the sensitive dependence on initial conditions characterized by the positivity of the top Lyapunov exponent. Given the significant challenges in proving such properties in purely deterministic scenarios, increasing attention has shifted towards randomly forced systems.  In these systems, the presence of noise has averaging effects making the analysis of chaotic properties more tractable~\cite{You13, BBPS323}.

\subsection{Lagrangian chaos}

  Chaos plays an important role in the study of fluid dynamics, providing critical insights into the turbulence phenomena \cite{BJPV98}. 
 Despite its significance, there are only a few mathematically rigorous results on fluid~models.

  In this paper, we consider the Lagrangian flow associated with the randomly forced 2D Navier--Stokes system on the torus $\bbt^2:=\R^2/2\pi\Z^2$. The~Lagrangian flow is a family of diffeomorphisms 
  $$
  \phi^t: \bbt^2\rightarrow \bbt^2, \quad x\mapsto y(t), \quad  t\ge 0
  $$ 
  defined through the ODE 
\begin{equation}\label{E:1.1}
  \dot y(t)=u(t,y(t)),\quad y(0)=x\in \bbt^2,
\end{equation}where $u: \R_+\times\bbt^2\rightarrow \R^2$ is a sufficiently smooth velocity field of an incompressible fluid governed by the Navier--Stokes system
\begin{gather}
 \partial_t u - \nu \Delta u + \<u, \na\> u + \na p= \eta,\ \ \div\,  u=0, \quad x\in \bbt^2,\label{E:1.2}\\
	u(0)=u_0. \label{E:1.3}
\end{gather}
 Here $p=p(t,x)$ represents the pressure of the fluid, $\nu>0$ is the kinematic viscosity,  and $\eta$ is a random force (noise) defined on some probability space~$(\Omega, \mathcal{F}, \bbp)$. The~pair~$(u_t, \phi^t)$ contains the information about the velocities and positions of fluid particles at time $t$.

Chaoticity of the Lagrangian flow or {\it Lagrangian chaos} in the context of the system~\eqref{E:1.1}, \eqref{E:1.2} refers to the strict positivity of the top Lyapunov exponent for the Lagrangian flow~$\phi^t$. That is, to establish Lagrangian chaos, one seeks to 
 prove the existence of a deterministic constant~$\lambda_+ > 0$  such that the following limit holds:
$$
\lambda_+=\lim_{t\rightarrow \infty}\frac 1t\log|D_x\phi^t| \quad \text{$\bbp$-almost surely}
$$ for typical (in some sense) initial data $(u_0, x)$. Here, $D_x\phi^t$ is the Jacobian matrix of $\phi^t$ at the point $x\in \bbt^2$ and $|D_x\phi^t|$ denotes its norm.~Thus, the exponent $\lambda_+$ provides information about the exponential growth of the Jacobian. Heuristically, this means that  
$$
d_{\bbt^2}(\phi^t(x_1),\phi^t(x_2)) \approx e^{\la_+ t} d_{\bbt^2}(x_1,x_2) 
$$for sufficiently close points $x_1,x_2\in \bbt^2$. Therefore, the positivity of $\lambda_+$ indicates exponential sensitivity of Lagrangian trajectories with respect to the initial data, which is the essence of Lagrangian chaos.

The study of Lagrangian chaos for the Navier--Stokes system was initiated in~\cite{BBS22} for the case of a {\it non-degenerate} white-in-time noise. In~that work, the non-degeneracy of the noise is crucial as it ensures the {\it strong~Feller property}, which is essential for the employed methods. The~case of {\it highly degenerate} (or {\it hypoelliptic}) white-in-time noise was recently addressed in~\cite{CR24}.

  The main objective of this paper is to demonstrate Lagrangian chaos in the case when the noise~$\eta$ is a ({\it non-Gaussian}) {\it bounded highly degenerate} process perturbing directly only a small number of low Fourier~modes. In~order to establish this result, we introduce a new abstract criterion for chaoticity of random dynamical systems, by developing the mixing results and the framework of the papers~\cite{KNS20,KNS20.0}.~This criterion relies on the controllability properties of the underlying deterministic system and can be applied in many other interesting situations. 
This paper addresses a question raised in~\cite{BBS22} (see Remark 1.11) and~\cite{BBPS323} (see Section~6).

\subsection{Main result in the case of a Haar noise}

The main results of this paper apply to the system \eqref{E:1.1},~\eqref{E:1.2} driven by a wide class of bounded noises. However, in this Introduction, we focus on a particular case when $\eta$ is a {\it Haar coloured noise}. More precisely, we assume that the following condition is satisfied.
 \begin{description}
 \item[Structure of the noise.]
{\sl The process $\eta$ in \eqref{E:1.2} is of the form
$$
	\eta(t,x)=  \sum_{j\in\mathcal{I}} \eta^j(t)E_j(x),
$$ where 
\begin{itemize}
	\item  $\{E_j\}$ are the trigonometric functions  
$$E_j(x)=j^{\bot}
\begin{cases}
\cos\langle j, x  \rangle \ \textrm{for} \ j_1>0 \ \textrm{or} \ j_1=0, \ j_2>0, \\
\sin\langle j, x \rangle \ \textrm{for} \ j_1<0 \ \textrm{or} \ j_1=0, \ j_2<0
\end{cases}
$$with $j^{\bot}=(-j_2, j_1)$,  and  $\mathcal{I}\subset \mathbb{Z}^2_*$ is a finite set given by
$$
	\mathcal{I}=\left\{j=(j_1, j_2)\in \mathbb{Z}^2_*: |j_i|\leq 1,\, i=1, 2\right\},
$$
	\item  $\{\eta^j\}$ are   independent copies of a random process  $\tilde\eta$ given~by
$$
	\tilde\eta(t)=\sum_{i=0}^\infty \xi_i h_0(t-i)
	+\sum_{i=1}^\infty c_i\sum_{l=0}^\infty \xi_{il}h_{il}(t),
$$with $\{h_0,h_{il}\}$ being the $L^\ty$-normalized
 Haar system, $c_i=A^{-i}$ with some $A>1$ or
  	$c_i=C i^{-q}$ with some $C>0$ and  $q>1$, 
 and~$\{\xi_i,\xi_{il}\}$ are independent identically distributed (i.i.d.) scalar random variables with Lipschitz-continuous density~$\rho$ such that 
\begin{equation}\label{E:1.4}
	  \supp\rho\subset[-1,1] \quad\text{and}\quad \rho(0)>0.
\end{equation} 
\end{itemize}

}
\end{description}

 We consider the Navier--Stokes system \eqref{E:1.2} in the usual space of divergence-free vector fields with zero mean value
 $$
H:=\left\{u\in L^2(\bbt^2, \R^2): \div\,u=0,   \ \int_{\bbt^2}u(x)\dd x=0 \right\}, 
$$as well as in more regular spaces $V^k:=H^k\cap H$, where $H^k:=H^k(\bbt^2, \R ^2)$ is~the Sobolev space of order $k\ge1$ endowed with the norm $\|\cdot\|_{H^k}$. For~any $u_0\in H$, the problem~\eqref{E:1.2}, \eqref{E:1.3} has a unique solution whose restriction to integer times~$u_n$ forms a Markov process. It is proved in~\cite{KNS20} that under the above conditions this process has a unique stationary measure~$\mu$ which is exponentially mixing. The inclusion part in \eqref{E:1.4} implies that the process~$\eta$ is bounded; as a result, the support $X$ of the stationary measure~$\mu$ is a compact set in $V^5$. The pair~$(u_n, y_n)$, called {\it Lagrangian~process}, is well-defined due to the regularity of the velocity field and is Markovian in the compact metric space~$X\times \bbt^2$. The incompressibility of the fluid implies that~$\mu\times \leb$ is a stationary measure for this process, where~$\leb$ is the normalized Lebesgue measure on~$\bbt^2$. However, ergodicity of~$\mu$ does not necessarily guarantee ergodicity of the product measure~$\mu\times \leb$. Establishing the ergodicity of the latter is a considerably more challenging task, especially when dealing with a highly degenerate noise. The~following is our main result in the setting of a Haar noise.
 \begin{mt}
Under the above conditions, the measure $\mu\times \leb $ is a mixing stationary measure for the   Lagrangian process $(u_n, y_n)$. Moreover, there exists a deterministic constant $\lambda_+>0$ such that the following limit~holds:
\begin{equation}\label{E:1.5}
	\lambda_+=\lim_{n\rightarrow \infty}\frac 1n\log|D_x\phi^n| 
\end{equation}
for $\mu\times \leb\times\, \pP$-a.e. $(u_0,x,\omega)\in X\times \bbt^2\times \Omega$. 
\end{mt}
  A more general version of this result is presented in Theorem~\ref{T:4.2}, which applies to a broader class of random processes $\eta$ satisfying some decomposability and observability assumptions.

\subsection{Related literature}

In the physics literature, there has been extensive research on the Lagrangian and Eulerian chaos, as well as their interconnections; e.g., see~\cite{AGM-96,BJPV98,CFVP-91,FD-01, GV-94}.   However, the number of mathematically rigorous results remains limited despite the importance of the topic. 

Chaoticity of the Galerkin approximations of the 2D Navier--Stokes system with a degenerate noise has been proved in~\cite{BPS24}. The proof relies on a version of a sufficient condition for chaoticity obtained in~\cite{BBPS-422}, whose hypotheses are verified by using a reduction to genericity properties of a diagonal sub-algebra and some computational algebraic geometry. This approach not only allows to show the positivity of the top Lyapunov exponent but also furnishes a quantitative lower bound.

Chaoticity of the Eulerian component (i.e., the velocity field) of the stochastic Navier--Stokes system  is an open problem.~Lagrangian chaos has been established in the paper~\cite{BBS22} for the system \eqref{E:1.1},~\eqref{E:1.2} driven by a white-in-time noise that is non-degenerate at high Fourier modes. More~precisely, the noise is assumed to be of the form
$$
\eta(t,x)=\partial_t W(t,x), \quad  W(t,x)=\sum_{j\in \Z^2_*} b_j \beta_j(t) E_j(x),
$$where $\{\beta_j\}$ is a sequence of independent standard Brownian motions and~$\{b_j\}$ is a sequence of real numbers such that 
\begin{equation}
	\frac{c}{|j|^{\alpha}}  \le |b_j|\le \frac{C}{|j|^{\alpha}}  \quad \text{ for }|j|\ge  L  \label{E:1.6}
\end{equation}
with some constants $c,C, L>0$ and $\alpha>5$.~The non-degeneracy assumption~\eqref{E:1.6} ensures the {\it strong Feller property} for the Lagrangian process~$(u_t, y_t)$ which is essential for the arguments of~\cite{BBS22}. Indeed, the strong Feller property allows to prove the ergodicity of the Lagrangian process, which combined with the multiplicative ergodic theorem guarantees the existence of the exponent~$\la_+$. Since the Lagrangian flow $\phi^t:\bbt^2\to \bbt^2$ is volume-preserving, we have $\la_+\ge0$. The strong Feller property is also used to prove the positivity of~$\la_+$. Indeed, according to Furstenberg's criterion \cite{Fur63, Led86}, if~$\la_+=0$, then there is a measurable deterministic structure that is almost surely invariant under the dynamics of the triple~$(u_t, y_t, D_x\phi^t)$ (the precise formulation is recalled below in Theorem~\ref{T:2.3}). When the velocity~$u$ is provided by a finite-dimensional system, e.g., by Galerkin approximations of the 2D Navier--Stokes system, the existence of invariant structure can be ruled out (thus establishing the positivity of~$\la_+$) by showing that the probability law of~$D_x\phi^t$ is sufficiently non-degenerate; for details see~Section~2.4 in~\cite{BBS22}. In the case when~$u$ is infinite-dimensional and is given by the 2D Navier--Stokes system, this strategy does not work anymore.  
Under the strong Feller assumption, the paper~\cite{BBS22} furnishes a refined version of Furstenberg's criterion with an alternative of two possible continuous invariant structures.   Then the existence of such structures is ruled out by using an approximate controllability~property. 

In a series of subsequent papers by the same authors, building on the Lagrangian chaos derived in~\cite{BBS22}, further important results have been established. These include almost-sure exponential mixing of passive scalars \cite{BBPS22c} and enhanced dissipation~\cite{BBPS21} for the Navier--Stokes equations driven by a non-degenerate noise. Furthermore, for a version of Batchelor's law a rigorous proof has been provided in~\cite{BBPS22d}. In all these papers the noise is white-in-time and non-degenerate with coefficients satisfying \eqref{E:1.6}. The recent paper~\cite{CR24} introduces a new elegant method for construction of a continuous invariant structure that does not require strong Feller property. Instead, the construction relies on a smoothing estimate for the transition function of the triple matrix process. This estimate is verified for the Navier--Stokes system driven by a highly degenerate white-in-time noise using the Malliavin calculus approach from~\cite{HM06}. Thus, the paper~\cite{CR24} establishes the positivity of the top Lyapunov exponent and proves exponential mixing of passive scalars.

The current paper focuses on the case of (non-Gaussian) bounded highly degenerate noises and paves the way for extending the above results in this~framework.

\subsection{Ideas of the proof}

The proof of the Main Theorem can be summarized as follows.

 \medskip
   \noindent 
{\bf Ergodicity of the Lagrangian process.} The first step in Main Theorem's proof is the study of the ergodicity of the Lagrangian process $(u_n, y_n)$. Given that the process is considered in a compact phase space, the existence of a stationary measure follows from the classical Bogolyubov--Krylov argument (see, for instance,~\cite{KS12}). The~uniqueness of stationary measure and mixing are much more delicate and are derived by developing the controllability approach of the paper~\cite{KNS20} (see also~\cite{KNS20.0}). In the case of a non-degenerate bounded noise, the~mixing for the process~$(u_n, y_n)$ has been shown~in~\cite{JNPS21}.

The ODE component~\eqref{E:1.1} of the Lagrangian process does not necessarily depend analytically on the force $\eta$ in~\eqref{E:1.2}.~Consequently, the abstract result of~\cite{KNS20} cannot be applied directly to prove mixing for the process~$(u_n, y_n)$ in the highly degenerate setting. Instead, using an approximation argument, we show that the analyticity condition required in~\cite{KNS20} can be dropped completely. As~a result, we derive the uniqueness and mixing of the stationary measure for the process $(u_n, y_n)$ by showing that the system~\eqref{E:1.1},~\eqref{E:1.2} and its linearization are approximately controllable. 

The ergodicity of the Lagrangian process ensures the existence of the  Lyapunov exponents. In particular, the limit \eqref{E:1.5} towards the top Lyapunov exponent~$\la_+$ holds for $\mu \times \leb$-almost any initial data $(u_0,x)$. The proof of the positivity of $\la_+$ is much more challenging and requires taking into consideration also the Jacobian process~$\{D_x\phi^n\}$.

 \medskip
   \noindent 
{\bf Refined Furstenberg's criterion.}
As mentioned before, Furstenberg's criterion indicates that the alternative to the positivity of the top Lyapunov exponent is the existence of a certain almost surely invariant structure under the dynamics of the triple $(u_t, \phi^t, D_x\phi^t)$. One~cannot rule out the existence of this structure by applying the approach of~\cite{BBS22} since the strong Feller property is not satisfied. 

When the phase space is compact, and the process is uniformly mixing with respect to the total variation metric, a refinement of Furstenberg's criterion is obtained in~\cite{Bou88} and further developed in~\cite{BCG23}. According to that result, if~$\la_+=0$, then there is a continuous deterministic structure that is almost surely invariant under the dynamics.~However, in the case of a degenerate noise, the Lagrangian process~$(u_n, y_n)$ of the Navier--Stokes system is mixing only with respect to the weaker dual-Lipschitz metric, rendering the results of \cite{Bou88} and~\cite{BCG23}~inapplicable in our setting. 
 
  Instead, we use a new extension of Furstenberg's criterion established in the recent paper~\cite{CR24}. This work provides a new method for constructing a continuous invariant structure using a smoothing estimate for the transition function of the associated triple matrix process. In Theorem~\ref{T:2.4} below, we present an abstract version of that result, assuming that the transition function of the matrix process satisfies a H\"older type estimate.

 \medskip   
 \noindent 
{\bf Controllability.} We establish both the existence and positivity of the top Lyapunov exponent $\lambda_+$ by using appropriate controllability properties of the triple $(u_t, y_t,D_x\phi^t)$ and its linearization. The verification of these controllability properties is more subtle than in the case of the Eulerian component $u_t$ alone as considered in~\cite{KNS20}, since in the current case the control acts directly only on the velocity equation making the problem much more~degenerate.

   Elements of controllability have been used in~\cite{BBS22} as well in order to rule out the degenerate scenario in Furstenberg's criterion.~However, in the current situation, less information is known about the invariant structure which we compensate by employing a stronger controllability property for the triple~$(u_t, y_t,D_x\phi^t)$: we use the fact that the triple can be steered approximately to any target state $(u^\sharp, y^\sharp, A^\sharp)\in X\times \bbt^2\times \Sl_2(\R)$ by~applying controls from the support of the noise (the latter being compact in our case).

 \medskip

  To make the presentation as general as possible, we establish an abstract criterion for positivity of the top Lyapunov exponent for general random dynamical systems. We build on the dynamical system framework and the results of the papers~\cite{KNS20,KNS20.0}. This criterion reduces the problem of the positivity of the top exponent to the verification of regularity, decomposability, and approximate controllability hypotheses for both nonlinear and linearized systems.
 We establish the Main Theorem by checking the validity of these hypotheses in the case of the triple matrix process $(u_t, y_t,D_x\phi^t)$
   of the Navier--Stokes system. This abstract criterion is general enough and can be applied in many other contexts.

\medbreak

    Thus, the following are the new key ingredients in our proofs:
   \begin{itemize}
   	\item abstract criterion for positivity of the top Lyapunov exponent,
   	\item removal of the analyticity requirement in~\cite{KNS20} and a H\"older estimate for the transition function of the matrix process,
   	   	\item  approximate controllability of the triple $(u_t, y_t,D_x\phi^t)$ using controls from the support of the noise,
   	\item approximate controllability of the linearization of the triple system using only a few Fourier modes.  
   \end{itemize}

 The paper is organized as follows. In Section~\ref{S:2}, we recall some basic facts from the theory of random dynamical systems and formulate a version of the refinement of Furstenberg's criterion from~\cite{CR24}.
   Section~\ref{S:3} establishes an abstract criterion for positivity of the top Lyapunov exponent. In Section~\ref{S:4}, we formulate and prove a general version of the Main Theorem. In Sections~\ref{S:5} and \ref{S:6}, we check the validity of the nonlinear and linear approximate controllability properties for the triple~$(u_t, y_t,D_x\phi^t)$ associated with the Navier--Stokes system~\eqref{E:1.1},~\eqref{E:1.2}. Finally, the Appendix contains the proofs of several results used throughout the paper.

\subsection*{Acknowledgments}

 The work of D. Zhang is partially supported by 
NSFC (No.12271352, 12322108)
and Shanghai Rising-Star Program 21QA1404500.

 \subsubsection*{Notation}

 Let $(Z,d_Z)$ be a Polish space endowed with its Borel $\sigma$-algebra~$\mathcal{B}(Z)$. We denote by~$B_Z(a, R)$ the closed ball in $Z$ of radius $R > 0$ centered at $a \in  Z$. We shall use the following spaces. 
 
   \smallskip
   \noindent 
   $L^\ty (Z)$ is the space of  bounded 
measurable functions $f:Z\to \R $  endowed with the sup-norm $\|\cdot\|_{\infty}$, and $C_b(Z)$ is its subspace consisting of continuous functions.

\smallskip
\noindent
$C_b^{\gamma}(Z),$ $0< \gamma\le 1$ is the space of continuous functions $f:Z\to \mathbb{R}$ such that
$$
|f|_{\gamma}:=\|f\|_{\infty}+\sup_{0<d_{Z}(z_1, z_2)\le 1}\frac {|f(z_1)-f(z_2)|}{d_Z(z_1,z_2)^{\gamma}}<\infty.
$$In the case $\gamma=1$, we write $L_b(Z)$ and $\|f\|_L$ instead of $C_b^{\gamma}(Z)$ and $|f|_{\gamma}$. 
When~$Z$ is compact, we write $C(Z)$, $C^{\gamma}(Z),$ and $L(Z)$.  

\smallskip
\noindent 
$\PP(Z)$ is the set of Borel probability measures on $Z$ endowed with the dual-Lipschitz metric 
$$
\|\mu_1-\mu_2\|_L^*:=\sup_{\|f\|_L\leq1}|\langle f, \mu_1 \rangle- \langle f, \mu_2 \rangle|, \quad \mu_1, \mu_2\in \PP(Z),
$$
where  $\langle f, \mu \rangle :=\int_Z f(z)\mu(\dd z)$ for $f\in C_b(Z)$ and $\mu\in \PP(Z)$.

We shall use the following matrix-related notations. For any integer $d\ge2$, $\M_d(\R)$ is the collection of all $d \times d$ matrices with real entries, $\Gl_d(\R)$ is the~group of invertible matrices, and $\Sl_d(\R)$ is the subgroup of matrices with determinant~$1$. Given a matrix~$A\in \M_d(\R)$, we denote by $|A|$ its Euclidian norm and by $A^\TTT$ its transpose. The real $d-1$-dimensional projective space~$P^{d-1}$ is the quotient space $\left(\R^d \setminus \{0\}\right)/\sim$ for the   equivalence relation defined by~$x \sim  y$ if there is a non-zero real number $\la $  such that $x = \la y$; the projective space~$P^{d-1}$ is a compact analytic manifold. Note that any matrix~$A\in \M_d(\R)$ defines a map from $P^{d-1}$ to $P^{d-1}$ which we denote by the same symbol $A$. Given a measure~$\nu \in \PP(P^{d-1})$ and a matrix~$A\in \M_d(\R)$, we write $A_*\nu $ for the pushforward measure of $\nu$ under~$A$.

 To simplify notation, we shall often use the symbol $\lesssim$ to indicate that an inequality holds up to an unessential multiplicative constant $C$.

\section{Background on random dynamical systems}\label{S:2}

In this section, we briefly recall some basic concepts and results on random dynamical systems (RDS) relevant to this paper. Given the forthcoming application to the 2D Navier--Stokes system with bounded noise, we only focus here on RDS with independent increments in a compact metric space.
For more details and proofs, we refer the reader to the books \cite{Kif86,Arn98,KS12}.

\subsection{Definitions}

 Let $(\Omega, \mathcal{F},\pP)$ be a probability space, $Z$ be a compact metric space endowed with its Borel $\sigma$-algebra $\mathcal{B}(Z)$, and $\varphi: \Omega\times Z\to Z$ be a measurable mapping such that $\varphi_\omega:Z\rightarrow Z$ is continuous for $\pP$-a.e. $\omega \in \Omega$. We consider an~RDS $\Phi=(\bOmega, \bFF,\bpP, \theta^n, \varphi^n)$ defined as follows.
 \begin{itemize}
 	\item[$\bullet$] $(\bOmega, \bFF,\bpP):=(\Omega, \mathcal{F},\pP)^\N$ with elements $\bomega\in \bOmega$ written as~$\bomega=(\omega_1, \omega_2,\ldots)$.
 	\item[$\bullet$] $\theta:\bOmega\to \bOmega$ is the shift $\theta \bomega=(\omega_2, \omega_3,\ldots)$. Note that it is measure-preserving, that is, $\bpP(\theta(\Gamma))=\bpP(\Gamma)$ for any~$\Gamma\in \bFF$. We denote $\theta^n:\bOmega\to \bOmega$  the $n$-fold composition of~$\theta$ with itself and $\theta^0=\Id_\bOmega$. 
 	\item[$\bullet$]  $\varphi^n: \bOmega\times Z\to Z$ are the compositions of~$\varphi$ with itself given by
\begin{equation}\label{E:2.1}
 \varphi^0_\bomega:=\Id_Z,\quad \varphi^n_\bomega:= \varphi_{\omega_n}\circ\cdots\circ\varphi_{\omega_1},\quad n\ge1.
\end{equation}
 \end{itemize}
   Note that the following cocycle property is satisfied for any $n, m\geq 0$:
$$
 \varphi^{n+m}_\bomega = \varphi^n_{\theta^m \bomega} \circ  \varphi^m_\bomega \quad\quad \text{$\bpP$-a.s.}.
$$

A process $\{z_n\}$ is associated with the RDS $\Phi$ via~$z_n=\varphi_\bomega^n(z)$; it is a Markov process with transition function 
$$
P_n(z, \Gamma):=\bpP\{z_n\in \Gamma\}, \quad z\in Z, \,\, \Gamma\in \mathcal{B}(Z)
$$  and Markov semigroups   
\begin{align*}
\PPPP_n&: L^\ty (Z)\rightarrow  L^\ty(Z), \ \  &\PPPP_n f(z)&:=\int_Z P_n(z, \dd y)f(y), \\
\PPPP_n^*&: \PP(Z)\rightarrow \PP(Z), \ \ &\PPPP_n^*\mu(\Gamma)&:=\int_Z P_n(z, \Gamma)\mu(\dd z).
\end{align*}
The continuity of $\varphi_\omega$ implies that $\PPPP_n$ is Feller, that is,  $\PPPP_n$ maps the space $C(Z)$ into itself. 
Recall that a measure $\mu\in \PP(Z)$ is called {\it stationary} if~$\PPPP_1^*\mu=\mu$. As~$Z$ is compact, there is at least one stationary measure. Given a stationary measure~$\mu$, a set $\Gamma\in \mathcal{B}(Z)$ is said to be $(\PPPP_1, \mu)$-invariant if the equality $\PPPP_1\mathbb{I}_\Gamma=\mathbb{I}_\Gamma$ holds $\mu$-a.s., where $\mathbb{I}_\Gamma$ is the indicator function of $\Gamma$. A~stationary measure $\mu$ is {\it ergodic} if all  $(\PPPP_1, \mu)$-invariant sets have $\mu$-measure zero or one, and $\mu$ is {\it mixing} if there is a sequence of positive numbers~$\gamma_n\to 0$ as $n\to \ty$ such that
$$
	 \left|\PPPP_n f (z)- \langle f, \mu \rangle  \right|\leq \gamma_n 	 \|f\|_L
$$
for any $z\in Z$ and $f\in L(Z)$. A mixing measure is ergodic.

\subsection{MET and Furstenberg's criterion}

Let the probability spaces $(\Omega, \mathcal{F},\pP)$ and $(\bOmega, \bFF,\bpP)$, the compact metric space~$Z$, and the RDS $\Phi=(\bOmega, \bFF,\bpP, \theta^n, \varphi^n)$ be as in the previous subsection. Assume that~$\aA:\Omega\times Z \to \Gl_d(\R)$, $d\ge2$ is a measurable mapping such that~$ \aA_{\omega,\cdot}:Z \to \Gl_d(\R)$ is continuous for $\pP$-a.e.~$\omega\in \Omega$. We define a sequence of mappings $\aA^n:\bOmega\times Z \to \Gl_d(\R)$ as follows
\begin{equation}\label{E:2.2}
	 \aA^0_{\bomega,z}:=\Id_{\R^d},\quad \aA^n_{\bomega,z}:= \aA_{\omega_n,\varphi^{n-1}_\bomega(z)}\circ\cdots\circ\aA_{\omega_1, z} \in \Gl_d(\R),\quad n\ge1
\end{equation}
 and note that the cocycle property 
  $$
      \aA^{n+m}_{\bomega,z} = \aA^n_{\theta^m\bomega,\varphi_\bomega^m(z)} \circ  \aA^m_{\bomega,z}  \quad\quad \text{$\bpP$-a.s.}
$$  holds for any $n, m\geq 0$ and $z\in Z$. The sequence $\{\aA^n\}$ is called a
{\it linear cocycle} over the RDS $\Phi$ generated by $\aA$.
  The following is a version of the multiplicative ergodic theorem (MET) of Oseledets~\cite{Osc68}; for a proof we refer to the books~\cite{Arn98,Via14} or the paper~\cite{Rag79}. 
\begin{theorem}\label{T:2.1}
Let $\mu\in \PP(Z)$ be an ergodic stationary measure for $\Phi$, and let~$\{\aA^n\}$ be a linear cocycle over $\Phi$ generated by $\aA$ satisfying the following integrability condition:
\begin{equation}\label{E:2.3}
  \E \int_Z\left(\log^+|\aA_{\omega, z}|+ \log^+|(\aA_{\omega, z})^{- 1}|\right) \mu(\dd z)<\infty,
\end{equation}where $\log^+(x):=\max\{0,\log(x)\}$ for $x>0$ and $\E$ is the expectation with respect to $\pP$.
Then there are $r \in \{1,\ldots, d\}$  deterministic real numbers
$$
\lambda_r< \cdots  < \lambda_1
$$
and for $\bpP\times \mu$-a.e. $(\bomega, z)$, a flag of subspaces
$$
\{0\}=:F_{r+1} \subsetneq F_r(\bomega, z)\subsetneq  \cdots \subsetneq F_2(\bomega, z) \subsetneq F_1:=\R^d
$$ 
such that
$$  \lambda_i = \displaystyle \lim_{n\rightarrow \infty}\frac 1n \log |\aA^n_{\bomega, z}v|,  \quad v\in F_i(\bomega, z) \setminus F_{i+1}(\bomega, z). 
$$
Moreover, for any $i\in  \{1,\ldots, r\}$, the mapping $(\bomega, z)\mapsto F_i(\bomega, z)$ is measurable
and $ \dim  F_i(\bomega,z)$ is constant for $\bpP\times \mu$-a.e. $(\bomega,z)$.
\end{theorem}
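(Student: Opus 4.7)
The plan is to derive Theorem~\ref{T:2.1} through the classical route: combine Kingman's subadditive ergodic theorem with the exterior-power trick to extract the Lyapunov spectrum, and then construct the filtration via a careful analysis of singular values.

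\textbf{Step 1 (Ergodic skew product).} Consider the measurable transformation $\Theta: \bOmega \times Z \to \bOmega \times Z$ given by $\Theta(\bomega, z) := (\theta\bomega, \varphi_{\omega_1}(z))$. Since $\mu$ is stationary and $\theta$ preserves $\bpP$, the measure $\bpP \times \mu$ is $\Theta$-invariant. A standard zero-one argument (using the independence of the coordinates of $\bomega$ together with the ergodicity of $\mu$ for the Markov semigroup $\PPPP_n$) shows that $\bpP \times \mu$ is in fact $\Theta$-ergodic.

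\textbf{Step 2 (Top exponent).} Setting $f_n(\bomega, z) := \log |\aA^n_{\bomega,z}|$, the cocycle identity \eqref{E:2.2} yields the subadditivity
$$
f_{n+m}(\bomega, z) \leq f_m(\bomega, z) + f_n\bigl(\Theta^m(\bomega, z)\bigr).
$$
The integrability hypothesis \eqref{E:2.3} gives $f_1^+ \in L^1(\bpP\times\mu)$, so Kingman's subadditive ergodic theorem combined with Step~1 furnishes a deterministic constant $\lambda_1$ such that $\tfrac{1}{n}f_n \to \lambda_1$ almost surely. Applying the same reasoning to $(\aA^n_{\bomega,z})^{-1}$ and using the second half of \eqref{E:2.3} shows $\lambda_1 \in \R$ (not $-\infty$).

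\textbf{Step 3 (Full spectrum via exterior powers).} For each $k \in \{1,\ldots, d\}$, the exterior-power cocycle $\Lambda^k \aA^n$ on $\Lambda^k \R^d$ is again a linear cocycle over $\Phi$ satisfying the integrability condition, since $|\Lambda^k A| \leq |A|^k$ and $|(\Lambda^k A)^{-1}| \leq |A^{-1}|^k$. Applying Step~2 to each $\Lambda^k \aA^n$ produces deterministic limits $\Lambda_k := \lim_{n\to\infty} \tfrac{1}{n}\log|\Lambda^k \aA^n_{\bomega,z}|$. Writing each $\Lambda^k A$ in terms of singular values of $A$ shows $\Lambda_k = \sigma_1 + \cdots + \sigma_k$ where $\sigma_1 \geq \cdots \geq \sigma_d$ are the almost sure limits of $\tfrac{1}{n}\log s_i(\aA^n_{\bomega,z})$, with $s_i$ the $i$th singular value. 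Retaining the distinct values yields $\lambda_r < \cdots < \lambda_1$ with multiplicities summing to $d$.

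\textbf{Step 4 (Oseledets filtration).} Following the Raghunathan approach, introduce the positive symmetric matrices $B_n(\bomega, z) := (\aA^n_{\bomega,z})^{\TTT} \aA^n_{\bomega,z}$. The key analytic step is to show that $B_n^{1/(2n)}$ converges $\bpP \times \mu$-almost surely to a positive definite matrix $\Lambda(\bomega, z)$ whose eigenvalues are $e^{\lambda_i}$ with multiplicities as in Step~3. Let $E_i(\bomega, z)$ denote the sum of the $\Lambda$-eigenspaces corresponding to the eigenvalues $e^{\lambda_r}, \ldots, e^{\lambda_i}$, and define
$$
F_i(\bomega, z) := \Bigl\{v \in \R^d : \limsup_{n \to \infty} \tfrac{1}{n}\log|\aA^n_{\bomega,z} v| \leq \lambda_i\Bigr\}.
$$
The backward identification between $E_i$ and $F_i$ yields the announced flag and the per-vector limits $\lambda_i$. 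Measurability of $F_i$ follows from its definition as a $\limsup$ of measurable objects, and $\Theta$-covariance ($\aA^1_{\bomega,z} F_i(\bomega,z) = F_i(\Theta(\bomega,z))$) combined with Step~1 forces $\dim F_i$ to be constant $\bpP \times \mu$-a.s.

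\textbf{Main obstacle.} The delicate point is Step~4: proving the almost sure convergence of $B_n^{1/(2n)}$ and correctly identifying its eigenspaces with the growth filtration. This is customarily handled by a repeated application of Kingman's theorem to the exterior-power cocycles of Step~3 together with careful estimates on the angles between the $k$th singular subspaces of $\aA^n$ and $\aA^{n+1}$, whose telescoping control produces the limiting orthonormal frame. The remaining passage from the $B_n$-eigenspaces to the intrinsic filtration $\{F_i\}$ relies on comparing forward growth with the singular decomposition and exploiting the gaps $\lambda_i - \lambda_{i+1} > 0$.
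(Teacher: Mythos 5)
The paper itself does not prove Theorem~\ref{T:2.1}: it is quoted as a known version of Oseledets' multiplicative ergodic theorem, with the proof delegated to the cited references (Arnold, Viana, Raghunathan); the only argument recorded in the text is the remark after the statement that the candidate exponents $\chi_i$ arise as almost sure limits of $\frac1n\log\sigma_i(\aA^n_{\bomega,z})$ via Kingman's subadditive ergodic theorem. Your Steps 1--3 reproduce exactly that part correctly: the skew product $\Theta$, invariance and ergodicity of $\bpP\times\mu$ (standard for i.i.d.-driven Markov chains with ergodic stationary measure), Kingman applied to $\log|\aA^n_{\bomega,z}|$ and to the inverse cocycle, and the exterior-power trick, from which the individual singular-value exponents are recovered as differences $\Lambda_k-\Lambda_{k-1}$. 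This is the same classical Raghunathan-style route as in the references the paper points to, so on the level of approach there is nothing to object to.

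As a proof, however, your proposal stops precisely where the content of the theorem lies. Step 4 --- the $\bpP\times\mu$-almost sure convergence of $\bigl((\aA^n_{\bomega,z})^{\TTT}\aA^n_{\bomega,z}\bigr)^{1/(2n)}$, the identification of its eigenspaces with the sets $F_i(\bomega,z)$ defined by $\limsup$-growth, and the upgrade from $\limsup$ to genuine limits for every $v\in F_i\setminus F_{i+1}$ --- is only described strategically (angle estimates between singular subspaces, telescoping) and explicitly labelled the ``main obstacle''. Kingman plus exterior powers yields only the deterministic numbers $\lambda_r<\cdots<\lambda_1$ and their multiplicities; it does not by itself produce the measurable flag, its covariance, or the exact per-vector asymptotics, and those estimates are the genuinely delicate part of the one-sided MET. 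So what you have is a correct outline of the standard proof rather than a complete one; to close it you must either carry out the Raghunathan convergence argument in detail or, as the paper does, simply cite it.
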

The numbers $\lambda_i$ are called {\it Lyapunov exponents} and
\begin{align*}
   m_i := \dim  F_i(\bomega,z) - \dim  F_{i+1}(\bomega,z)
\end{align*}are their {\it multiplicities}. In the proof of Theorem~\ref{T:2.1} it is shown that the Lyapunov exponents $\lambda_i$ are among the values obtained by
\begin{align*}
   \chi_i=\displaystyle \lim_{n\rightarrow \infty} \frac1 n \log \sigma_i(\aA^n_{\bomega, z}),
\end{align*}
where $\sigma_i(\aA^n_{\bomega, z})$ is the $i$-th singular value of the matrix $\aA^n_{\bomega, z}$.
  The existence of the limit and the fact that it is constant
for $\bpP\times \mu$--a.e.~$(\bomega, z)$ is derived from Kingman's subadditive ergodic theorem (see \cite{Kin73}). 
The multiplicities $m_i$ satisfy the relation
\begin{align*}
   m_i = \# \left\{1\leq j\leq d: \chi_j = \lambda_i\right\}.
\end{align*}
 From the fact that $\sigma_1(A)=|A|$ and $\sigma_d(A)=|A^{-1}|^{-1}$ for any $A\in \Gl_d(\R)$, it follows that 
\begin{align*}
\la_1&=  \lim_{n\rightarrow \infty} \frac1 n \log|\aA^n_{\bomega, z}|=: \lambda_+, \\
  \lambda_r& =-  \lim_{n\rightarrow \infty} \frac1 n \log|(\aA^n_{\bomega, z})^{-1}|=:\lambda_-. 	
\end{align*}
 Furthermore,
as $|\dett(A)|=\prod_{i=1}^d \sigma_i(A)$,
we have
\begin{align*}
   \lambda_\Sigma := \sum\limits_{i=1}^r m_i \lambda_i
   = \lim \limits_{n\to \infty} \frac{1}{n} \log |\dett(\aA^n_{\bomega, z})|.
\end{align*}Thus, $\lambda_\Sigma =0$ if $\dett(\aA^n_{\bomega, z})=1$. We arrive at the following corollary of Theorem~\ref{T:2.1}.
\begin{corollary}\label{C:2.2}
	If, additionally to the conditions of Theorem~\ref{T:2.1}, we assume that the generator~$\aA$ takes values in $\Sl_d(\R)$,
then $\lambda_\Sigma =0$.
\end{corollary}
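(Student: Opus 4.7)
The proof plan is essentially a one-line deduction from the formulas for $\lambda_\Sigma$ derived in the discussion preceding the corollary, so I would organize it as follows.

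First, I would observe that $\Sl_d(\R)$ is a subgroup of $\Gl_d(\R)$, i.e., it is closed under composition. Looking at the cocycle definition \eqref{E:2.2}, the matrix $\aA^n_{\bomega,z}$ is obtained as the product $\aA_{\omega_n,\varphi_\bomega^{n-1}(z)}\circ\cdots\circ\aA_{\omega_1,z}$. Under the additional hypothesis that the generator $\aA$ takes values in $\Sl_d(\R)$, every factor in this product lies in $\Sl_d(\R)$, and so the composition belongs to $\Sl_d(\R)$ as well. Consequently $\dett(\aA^n_{\bomega,z})=1$ for every $n\ge 0$ and for $\bpP$-a.e.\ $\bomega\in\bOmega$ and every $z\in Z$.

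Next, I would simply quote the identity
\begin{align*}
\lambda_\Sigma=\lim_{n\to\infty}\frac{1}{n}\log\bigl|\dett(\aA^n_{\bomega,z})\bigr|,
\end{align*}
which is valid for $\bpP\times\mu$-a.e.\ $(\bomega,z)$ by Theorem~\ref{T:2.1} together with the multiplicativity of the determinant and the factorization $|\dett(A)|=\prod_{i=1}^d\sigma_i(A)$. Substituting $|\dett(\aA^n_{\bomega,z})|=1$ into the right-hand side yields $\lambda_\Sigma=0$, which is the required conclusion.

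There is no real obstacle in this argument; the only point worth double-checking is that the integrability hypothesis \eqref{E:2.3} and the ergodicity of $\mu$ remain in force so that the formula for $\lambda_\Sigma$ may be invoked, but these are inherited directly from the statement of Theorem~\ref{T:2.1} which is assumed in the corollary. The result could therefore be presented as a brief remark immediately following the derivation of the expression for $\lambda_\Sigma$ in the paragraph above the corollary.
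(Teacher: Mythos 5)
Your proposal is correct and follows exactly the route the paper takes: the cocycle stays in $\Sl_d(\R)$ since the group is closed under composition, so $\dett(\aA^n_{\bomega,z})=1$, and substituting this into the identity $\lambda_\Sigma=\lim_{n\to\infty}\frac1n\log|\dett(\aA^n_{\bomega,z})|$ (derived in the discussion preceding the corollary via $|\dett(A)|=\prod_{i=1}^d\sigma_i(A)$) gives $\lambda_\Sigma=0$. Nothing is missing.
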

Recall that $P^{d-1}$ is the real projective space of~$\R^d$, and  $\PP(P^{d-1})$ is the set of Borel probability measures on $P^{d-1}$ endowed with the weak convergence topology and the associated Borel $\sigma$-algebra. Given a  set~$D \in  \mathcal{B}(Z)$, we shall say that $\{\nu_z\}_{z\in D} \subset \PP(P^{d-1})$ is a  measurable family if the mapping $z\mapsto \nu_z$ is measurable from $D$ to~$\PP(P^{d-1})$. A  family $\{\nu_z\}_{z\in D} \subset \PP(P^{d-1})$ is said to be weakly continuous if the mapping~$z\mapsto \nu_z$ is continuous from~$D$ to~$\PP(P^{d-1})$.

The next theorem is Furstenberg's criterion originally established in \cite{Fur63} in a particular case of i.i.d. matrices. The version presented here follows from Proposition~2 and Theorem~3 in~\cite{Led86}.
\begin{theorem}\label{T:2.3} Under the conditions of Theorem~\ref{T:2.1} and the assumption that $\lambda_+=\lambda_-$, there is a measurable family  $\{\nu_z\}_{z\in \supp 
\mu} \subset \PP(P^{d-1})$ such~that 
\begin{equation}\label{E:2.4}
\left(\aA^n_{\bomega, z}\right)_* \nu_z=\nu_{\varphi_{\bomega}^nz}
\end{equation}
 for any $n\ge 1$ and $\bpP\times \mu$-a.e. $(\bomega, z)$.
 \end{theorem}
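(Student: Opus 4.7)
The plan is to lift the base dynamics to the projective bundle $Z\times P^{d-1}$, produce an invariant probability measure $\tilde\mu$ there via Bogolyubov--Krylov, disintegrate $\tilde\mu$ along the projection to $Z$ to obtain the family $\{\nu_z\}_{z\in\supp\mu}$, and then upgrade the automatic averaged stationarity of the disintegration into the pathwise equivariance~\eqref{E:2.4}; the hypothesis $\lambda_+=\lambda_-$ enters precisely at this last step as a rigidity input.

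First I would consider the skew-product $\tilde\varphi_{\omega}(z,\bar v):=(\varphi_\omega z,\aA_{\omega,z}\bar v)$ on the compact metric space $Z\times P^{d-1}$, where $\aA_{\omega,z}\in\Gl_d(\R)$ acts on $P^{d-1}$ via the induced projective transformation. Since $\varphi_\omega$ and $\aA_{\omega,\cdot}$ are $\pP$-a.s.\ continuous, the associated Markov semigroup is Feller on the compact space $Z\times P^{d-1}$, so an invariant probability measure $\tilde\mu$ exists; after passing to an ergodic component if necessary I may arrange that the projection of $\tilde\mu$ to $Z$ coincides with the ergodic measure $\mu$. The disintegration theorem on Polish spaces then yields a measurable family $\{\nu_z\}_{z\in\supp\mu}\subset\PP(P^{d-1})$ with $\tilde\mu=\int_Z(\delta_z\otimes\nu_z)\,\mu(\dd z)$. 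Invariance of $\tilde\mu$ under the one-step Markov kernel translates, by testing against bounded measurable functions on $Z\times P^{d-1}$, into an \emph{averaged} identity: the conditional distribution of $\aA_{\omega,z}\bar v$ given $\varphi_\omega z=z'$, when $(z,\bar v)\sim\tilde\mu$ and $\omega\sim\pP$, equals $\nu_{z'}$ for $\mu$-a.e.\ $z'$; iterating gives the analogous statement with $\aA^n_{\bomega,z}$ and $\varphi^n_\bomega z$. This is strictly weaker than~\eqref{E:2.4}, which demands equality for almost every realization $\bomega$ separately rather than only on average.

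The main step is to promote this averaged invariance to the pointwise equivariance under the hypothesis $\lambda_+=\lambda_-$. Following Ledrappier~\cite{Led86}, I would introduce the random measures $\xi_n(\bomega,z):=(\aA^n_{\bomega,z})_*\nu_z$, interpret them as a martingale-type sequence in $\PP(P^{d-1})$ with respect to the filtration generated by $\omega_1,\ldots,\omega_n$, and analyze their asymptotic behavior through the MET. In the generic regime $\lambda_+>\lambda_-$ the measures $\xi_n$ contract exponentially onto the top Oseledets subspace, yielding the classical projective dichotomy. Under the coincidence hypothesis $\lambda_+=\lambda_-$, so that $r=1$ and $m_1=d$, the MET forces
\[
   \frac1n\log\bigl|\aA^n_{\bomega,z}\bar v\bigr|\to\lambda_+ \quad \text{for every } \bar v\in P^{d-1}
\]
and $\bpP\times\mu$-a.e.\ $(\bomega,z)$, so the cocycle admits no preferred contracting projective direction. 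A Jensen-type rigidity argument (the equality case in the strict concavity of a conditional-entropy/expectation functional on $\PP(P^{d-1})$) then forces $\xi_n$ to coincide with its conditional mean $\pP$-almost surely, which is precisely the equivariance~\eqref{E:2.4}.

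The hard part is this last rigidity step: without exponential contraction in projective directions one must still exclude all non-trivial martingale fluctuations of $\xi_n$, which requires a careful strict-convexity or entropy argument rather than a direct contraction estimate. The cleanest route is to invoke directly Proposition~2 and Theorem~3 of~\cite{Led86}, whose hypotheses are tailored to a measurable cocycle over an ergodic RDS satisfying the integrability~\eqref{E:2.3}; the compactness of $Z$ together with the continuity of $\varphi$ and $\aA$ guarantees that all measurability and support conditions are met, and the measurability of $z\mapsto\nu_z$ is automatic from measurable disintegration.
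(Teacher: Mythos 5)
The paper gives no internal proof of Theorem~\ref{T:2.3}: it presents the statement as the version following from Proposition~2 and Theorem~3 of~\cite{Led86}, which is exactly the citation your argument ultimately falls back on for the hard rigidity step, so your proposal is essentially the paper's route (projective lift and disintegration as context, then Ledrappier for the upgrade from averaged to pathwise invariance). One caveat on your interim sketch: $(\aA^n_{\bomega,z})_*\nu_z$ is not a forward martingale for the filtration of $\omega_1,\dots,\omega_n$ — in the cocycle \eqref{E:2.2} the newest matrix is composed outermost, and the disintegration identity only constrains the conditional law given the endpoint $\varphi_\omega z$, so the Jensen/rigidity step as written would not stand alone (Ledrappier runs the martingale argument on the natural extension, conditioning on the past) — but since you invoke \cite{Led86} for precisely that step, the conclusion is unaffected.
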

Note that the family  $\{\nu_z\}_{z\in \supp \mu}$ is deterministic, which makes the invariance relation \eqref{E:2.4} a strong rigidity property for the dynamical system. According to Furstenberg's criterion and Corollary~\ref{C:2.2}, this rigidity is incompatible with the chaoticity (i.e., the positivity of the top Lyapunov exponent~$\la_+$) in~the case when~$\aA$ takes values~in~$\Sl_d(\R)$.

 Based only on the measurability information provided in Theorem~\ref{T:2.3}, it is not clear how to rule out the possibility of existence of a family $\{\nu_z\}_{z\in \supp \mu}$ satisfying~\eqref{E:2.4}. However, under the additional assumption that the stationary measure~$\mu$ is mixing in the {\it total variation metric}, it is possible to select the family $\{\nu_z\}_{z\in \supp\mu}$ to be weakly continuous; see Section~4 of~\cite{Bou88} and Proposition~2.10 in~\cite{BCG23}. Unfortunately, these results are not applicable to the Lagrangian trajectories of the Navier--Stokes system, as the presence of degenerate noise only guarantees mixing in a weaker dual-Lipschitz metric.  In our context, we apply the following refinement of Furstenberg's criterion, which is a version of Proposition~3.4~in~\cite{CR24}.

Until the end of this section, we assume that the underlying probability space $(\Omega, \mathcal{F},\pP)$ has a special structure: $\Omega$ is a compact metric space, $\mathcal{F}$ is the Borel $\sigma$-algebra of $\Omega$, and $\mathbb{P}$ is a Borel probability measure on $\Omega$. Furthermore, we introduce the extended transition function
\begin{equation}\label{E:extend}
\wh{P}_1(z, \Gamma):=\mathbb{P}\{\omega:  ((\varphi_{\omega}(z), \aA_{\omega, z}))\in \Gamma\}, \quad z\in Z, \; \Gamma\in \mathcal{B}(Z\times \Gl_d(\R)).	
\end{equation}
 \begin{theorem} \label{T:2.4}
Additionaly to the conditions of Theorem~\ref{T:2.1},
   suppose that the generator~$ \aA: \Omega\times Z \to \Gl_d(\R)$ is continuous, and   there is a constant $0<\gamma\le1$ such that for any $c>0$ and some $C:=C(c)>0$, we~have 
\begin{equation}\label{E:ineq1}
\left|\int \wh{P}_1(z_1, \dd y) g(y)-\int \wh{P}_1(z_2, \dd y)g(y)\right|\le \left(C\|g\|_{\infty}+c|g|_{\gamma}\right)d_Z(z_1, z_2)^\gamma	
\end{equation}
for any $ g\in C_b^{\gamma}(Z\times \Gl_d(\R))$ and $z_1, z_2\in Z.$
If $\lambda_+=\lambda_-$,
then there is a weakly continuous family    $\{\nu_z\}_{z\in \supp \mu}\subset \PP(P^{d-1})$  such that the relation \eqref{E:2.4} holds  for any $n\ge 1$, any $z\in \supp 
\mu$, and $\bpP$-a.e. $\bomega$.
\end{theorem}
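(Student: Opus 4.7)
The plan is to upgrade the measurable family produced by Theorem~\ref{T:2.3} to a weakly continuous one, in the spirit of Proposition~3.4 in~\cite{CR24}, by iterating the invariance relation~\eqref{E:2.4} and exploiting the smoothing estimate~\eqref{E:ineq1}. It is enough to prove that for every $f$ in a fixed countable dense subset of $C(P^{d-1})$, the $\mu$-a.e.\ defined function $h_f(z):=\int_{P^{d-1}} f(x)\,d\nu_z(x)$ coincides $\mu$-almost everywhere with a continuous function on $\supp\mu$. Granting this, a standard diagonal extraction, combined with the tightness of $\PP(P^{d-1})$ that follows from the compactness of $P^{d-1}$, produces a weakly continuous family $\{\tilde\nu_z\}_{z\in\supp\mu}$; the invariance relation~\eqref{E:2.4} for $\tilde\nu_z$ then extends from the $\bpP$-a.s.\ identity on a $\mu$-full set to every $z\in\supp\mu$ by continuity of both sides and density of the full-measure set in the compact support.

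The key representation is obtained by iterating~\eqref{E:2.4} in its inverted form $\nu_z=(\aA^n_{\bomega,z})^{-1}_*\nu_{\varphi^n_\bomega z}$: pairing with $f$ and taking expectations over $\bomega$ yields, for every $n\ge 1$ and $\mu$-a.e.\ $z$,
\begin{equation*}
h_f(z)=\int_{Z\times\Gl_d(\R)} G_f(y,A)\,\wh P_n(z,dy\,dA),\qquad G_f(y,A):=\int_{P^{d-1}} f(A^{-1}x)\,d\nu_y(x),
\end{equation*}
where $\wh P_n(z,\cdot)$ is the natural $n$-step analogue of~\eqref{E:extend}, namely the joint law of $(\varphi^n_\bomega z,\aA^n_{\bomega,z})$. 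The function $G_f$ is bounded by $\|f\|_\infty$ but merely measurable in~$y$, so~\eqref{E:ineq1} does not apply directly.

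To bring $G_f$ within reach of~\eqref{E:ineq1}, for each $\varepsilon>0$ one uses Lusin's theorem with a Tietze-type extension in $\PP(P^{d-1})$ to produce a closed set $K_\varepsilon\subset\supp\mu$ with $\mu(K_\varepsilon)>1-\varepsilon$ and a weakly continuous family $\{\tilde\nu^\varepsilon_y\}_{y\in Z}$ coinciding with $\{\nu_y\}$ on $K_\varepsilon$; after mollification, the associated $G_f^\varepsilon(y,A):=\int f(A^{-1}x)\,d\tilde\nu^\varepsilon_y(x)$ is H\"older on $Z\times\Gl_d(\R)$ (with the matrix variable on bounded sets), satisfying $\|G_f^\varepsilon\|_\infty\le\|f\|_\infty$. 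Iterating~\eqref{E:ineq1} and exploiting the fact that the H\"older-seminorm coefficient $c$ may be taken strictly less than $1$, at the price of enlarging $C=C(c)$, yields
\begin{equation*}
\left|\int G_f^\varepsilon\,d\wh P_n(z_1,\cdot)-\int G_f^\varepsilon\,d\wh P_n(z_2,\cdot)\right|\le M_\varepsilon\,d_Z(z_1,z_2)^\gamma
\end{equation*}
with $M_\varepsilon$ independent of $n$. The discrepancy $\int(G_f-G_f^\varepsilon)\,d\wh P_n(z,\cdot)$ is dominated by $2\|f\|_\infty\,\bpP\{\varphi^n_\bomega z\in K_\varepsilon^c\}$; a Lipschitz approximation of $\mathbb I_{K_\varepsilon^c}$, combined with the iteration of~\eqref{E:ineq1} and the ergodicity of $\mu$, forces this quantity to be $O(\varepsilon)$ uniformly in $z\in\supp\mu$ once $n$ is large. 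Thus $h_f$ lies within $O(\varepsilon)$ of a H\"older function on $\supp\mu$, and letting $\varepsilon\to 0$ yields the required continuous version.

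The hardest technical point I anticipate is the iteration of~\eqref{E:ineq1} in the presence of the non-compact matrix factor $\Gl_d(\R)$: the cocycle $\aA^n_{\bomega,z}$ accumulates multiplicatively, so the H\"older seminorm of $G_f^\varepsilon$ in the matrix variable is only controlled on bounded subsets, and one must exploit the log-integrability hypothesis~\eqref{E:2.3} (and, in the intended application, the boundedness of the noise) to keep the constant $M_\varepsilon$ finite. A closely related subtlety is to arrange the Tietze extension and the mollification so that $G_f^\varepsilon$ carries a quantitative H\"older norm compatible with the iteration scheme. Once these regularity estimates are secured, the remaining steps---the diagonal extraction producing $\{\tilde\nu_z\}$ and the propagation of~\eqref{E:2.4} by continuity---are routine.
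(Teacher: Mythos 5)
Your strategy---take the measurable family supplied by Theorem~\ref{T:2.3} and upgrade it to a continuous version via Lusin approximation and iteration of \eqref{E:ineq1}---is essentially the Bougerol route, and it breaks at the two places where the hypotheses of Theorem~\ref{T:2.4} are too weak for it. First, the uniform-in-$n$ H\"older bound $M_\varepsilon$ for $z\mapsto\int G_f^\varepsilon\,\dd\wh P_n(z,\cdot)$ does not follow by iterating \eqref{E:ineq1}. The iteration runs over the extended chain on $Z\times\Gl_d(\R)$, and at the $k$-th step the one-step kernel is tested against $(y,B)\mapsto V_{n-k-1}(y,BB_0)$, where $V_m(y,B):=\E\,G_f^\varepsilon(\varphi^m_{\bomega}y,\aA^m_{\bomega,y}B)$ and $B_0$ ranges over $k$-fold products of one-step matrices. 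The matrix-direction seminorm of $V_m$ grows like $\E|\aA^m_{\bomega,y}|^\gamma$ and gets multiplied by $|B_0|^\gamma$, so the seminorm entering \eqref{E:ineq1} picks up a factor of order $\Lambda^{(n-1)\gamma}$ (with $\Lambda$ a bound on one-step matrices) that is discounted by only a single factor of $c$; choosing $c$ smaller does not help, since $c$ is fixed before the iteration and $C(c)$ is uncontrolled. In short, \eqref{E:ineq1} controls one step with matrices in the compact set $\aA(\Omega\times Z)$, not $n$ steps with the accumulated cocycle. The paper's proof of Lemma~\ref{continuous-measure} is organized precisely to avoid this: the backward recursion $\nu^{n+1}_{(z,A)}=A^{-1}_*\int\nu^n_y\,\wh P_1(z,\dd y)$ lives on the compact space $\tilde Z=Z\times\aA(\Omega\times Z)$, only one-step matrices ever act (the constant in \eqref{es2} comes from this compactness), and the induction closes with $M$ independent of $n$. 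Second, your error-control step ``$\bpP\{\varphi^n_\bomega z\in K_\varepsilon^c\}=O(\varepsilon)$ uniformly in $z$ for large $n$ by ergodicity'' is unavailable: Theorem~\ref{T:2.4} only assumes the conditions of Theorem~\ref{T:2.1}, i.e.\ an ergodic stationary measure and no mixing, so $P_n(z,\cdot)$ need not converge to $\mu$ for fixed $z$, let alone uniformly; and even with mixing in the dual-Lipschitz metric one cannot test the non-Lipschitz indicator $\mathbb{I}_{K_\varepsilon^c}$ without controlling the $\mu$-mass of a neighbourhood of $\partial K_\varepsilon$, which Lusin's theorem does not give. This is exactly the obstruction the paper invokes to explain why the results of \cite{Bou88} and \cite{BCG23}, which require total-variation mixing, are inapplicable here.

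Beyond these two gaps, note that the paper's argument is structured differently and never regularizes the Furstenberg family: it constructs a weakly continuous family from scratch as a Ces\`aro/Arzel\`a--Ascoli limit of the backward iterates, satisfying only the \emph{averaged} invariance \eqref{invariant}, and then uses the hypothesis $\lambda_+=\lambda_-$ through a Ledrappier-type rigidity result (Proposition~3.1 in \cite{CR24}) to upgrade averaged invariance to the pathwise relation \eqref{E:2.4}, first $\mu$-a.e.\ and then for every $z\in\supp\mu$ by continuity of $\nu$ and $\aA$. In your proposal the hypothesis $\lambda_+=\lambda_-$ enters only through Theorem~\ref{T:2.3}, and all the analytic weight rests on the regularity upgrade, which, as explained above, cannot be carried out under the stated assumptions; repairing the argument essentially forces you back onto the paper's route.
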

 The proof of this theorem is discussed in Section~\ref{S:7.1}.

\section{Abstract result}\label{S:3}

This section aims to establish a criterion for positivity of the top Lyapunov exponent for random dynamical systems. This is achieved by developing the sufficient conditions for ergodicity obtained in \cite{KNS20,KNS20.0} and building upon the results presented in the previous section.

\subsection{Formulation}\label{Subsec-Gener}

Given any integer $d\ge2$, separable Hilbert spaces $\hH$ and $E$, and a smooth compact Riemannian manifold without boundary $\mM$, let us denote by $\HH$ the product space $\hH\times \mM$, and assume that the mappings
\begin{align*}
	S&: \HH\times E\rightarrow  \HH, \quad (z, \eta)\mapsto S(z, \eta),\\
	\aA&: \HH\times E\to \Sl_d(\R), \quad (z, \eta)\mapsto \aA_{\eta, z}
\end{align*}
 are continuous. For any $z_0=z\in  \HH$, let us consider random sequences $\{z_n\}\subset \HH$ and $\{\aA^n_z\}\subset \Sl_d(\R)$ defined by
\begin{align}
z_n&=S(z_{n-1}, \eta_n), \quad 
  n\ge1, \label{E:3.1}\\
 \label{E:3.2}
\aA^n_z&=\aA_{\eta_n, z_{n-1}}\circ\aA^{n-1}_z, \ \ n\ge1,
\end{align}
where $\aA_z^0=\Id_{\R^d}$ and
  $\{\eta_n\}$ is a sequence of i.i.d. random variables in~$E$ defined on a probability space $(\Omega_0, \mathcal{F}_0, \bbp_0)$. Let us assume that the common law~$\ell$ of the random variables $\{\eta_n\}$ has compact support denoted by $\mathcal{K}$, and that there is a compact set $\XX\subset \HH$   that is invariant for the system~\eqref{E:3.1}, i.e., $S(\XX\times \mathcal{K})\subset \XX$. In what follows, we consider the restriction of the system~\eqref{E:3.1} to $\XX$. Under the above conditions,  $\{z_n\}$ is a discrete-time Markov process in $\XX$ with  transition function 
$$
P_n(z, \Gamma):=\bbp_0\{z_n\in \Gamma\}, \quad  z\in \XX, \ \Gamma\in \mathcal{B}(\XX) 
$$
and corresponding Markov semigroups $\PPPP_n$ and $\PPPP_n^*$.
For any $z\in \HH $ and sequence $\{\zeta_n\}\subset E$, we set 
$$
S_n(z; \zeta_1,\ldots, \zeta_n):=z_n \quad \text{ and  }\quad \aA^n(z; \zeta_1,\ldots, \zeta_n):= \aA^n_z,
$$ 
where $z_n$ and $\aA^n_z$ are defined recursively by~\eqref{E:3.1} and~\eqref{E:3.2} with $z_0=z$, $\aA_z^0=\Id_{\R^d}$, and~$\eta_k=\zeta_k$, $k=1, \ldots, n$. 
We assume that the following four hypotheses are satisfied.
 \begin{description}
\item[\hypertarget{H1}{(H$_1$)} Regularity.]
 {\sl There exists a Hilbert space~$\vV$ compactly embedded into~$\hH$ such that the mapping~$S$ is twice continuously differentiable from $\HH\times E$ to~$\vV\times \mM$ with bounded derivatives on bounded subsets.
}

\item[\hypertarget{H2}{(H$_2$)} Approximate controllability to a point.]
{\sl There is a point $\hat z\in \XX$ with the following property: for any $\e>0$, there is an integer $m\ge1$ such that, for any $z\in \XX$ and suitable controls $\zeta_1,\dots,\zeta_m\in\mathcal{K}$, we have}
$$
d_{\HH}(S_m(z;\zeta_1,\dots,\zeta_m),\hat z)<\e.
$$
\end{description}
For any $z\in \XX$, let~$\mathcal{K}^z$ be the collection of all $\eta\in \KK$ for which the derivative 
$$
 (D_\eta   S)(z,\eta):E\to \hH \times {\mathsf T}_y\mM=:\mathcal{T}
$$ has a dense image in $\mathcal{T}$, where $y$ is the $\mM$-component of $S(z,\eta)$ and ${\mathsf T}_y\mM$ is the tangent to $\mM$ at $y$. It is easy to see that~$\mathcal{K}^z$ is a Borel subset of~$E$ (see~Section~1.1 in~\cite{KNS20} for the~details). 
\begin{description}
\item[\hypertarget{H3}{(H$_3$)} Approximate controllability of the linearization.]
{\sl The set~$\mathcal{K}^z$ has full $\ell$-measure for any $z\in \XX$.
 }

\item[\hypertarget{H4}{(H$_4$)} Decomposability of the noise.]
{\sl There exists an orthonormal basis $\{\varphi_j\}$ in~$E$ such that the sequence $\{\eta_k\}$ can be represented in the form
$$
\eta_k=\sum_{j=1}^\infty b_j\xi_{jk}\varphi_j,
$$
where $\{b_j\}$ are real numbers with
$\sum_{j=1}^\infty b_j^2<\infty$ and $\{\xi_{jk}\}$ are independent random variables with Lipschitz-continuous densities~$\rho_j$ with respect to the Lebesgue measure on~$\R$. }
\end{description}
The following result is a version of Theorem~1.1 in~\cite{KNS20} and Theorem~1.1 in~\cite{KNS20.0}. The key difference is that we no longer assume the analyticity of the mapping $\eta\mapsto S(z,\eta)$. See also Theorem~3.1 in~\cite{JNPS21}, where analyticity is not required, instead, it is assumed that $\mathcal{K}^z=\mathcal{K}$ for any~$z\in \XX$; the latter condition is satisfied when the noise is {\it non-degenerate}.

\begin{theorem}\label{T:3.1}
Suppose that Hypotheses {\rm \hyperlink{H1}{(H$_1$)}--\hyperlink{H4}{(H$_4$)}} are satisfied. Then the Markov process $\{z_n\}$ has a unique stationary measure $\mu\in \PP(\XX)$, and there is a sequence~$\gamma_n\to 0$ as $n\to\infty$ such that
\begin{equation} \label{E:3.3}
\|\PPPP_n^*\lambda-\mu\|_L^*\le \gamma_n
\end{equation}for all $n\ge0$ and $\lambda\in\PP(\XX)$, 
where $\|\cdot\|_{L}^*$ is the dual-Lipschitz metric on $\XX$.
\end{theorem}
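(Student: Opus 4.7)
The plan is to follow the coupling architecture of~\cite{KNS20,KNS20.0} but to bypass the analyticity of $\eta\mapsto S(z,\eta)$ via an approximation on the decomposed noise. Existence of a stationary measure $\mu\in\PP(\XX)$ is immediate: by~\hyperlink{H1}{(H$_1$)} the map $S$ is continuous, so the semigroup $\PPPP_n$ is Feller on the compact set $\XX$, and Krylov--Bogoliubov supplies a stationary measure. The main content of the theorem is uniqueness together with the dual-Lipschitz mixing bound~(\ref{E:3.3}).

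For mixing, the idea is to construct, for arbitrary $\lambda,\lambda'\in\PP(\XX)$, a Markov coupling $(z_n,z_n')$ of $(\PPPP_n^*\lambda,\PPPP_n^*\lambda')$ on $\XX\times\XX$ with $\E\,d_\HH(z_n,z_n')\to 0$ at a rate independent of $\lambda,\lambda'$; this yields~(\ref{E:3.3}) by the Kantorovich--Rubinstein duality, and uniqueness follows. The coupling alternates between a \emph{recurrence} phase in which~\hyperlink{H2}{(H$_2$)} together with the decomposability~\hyperlink{H4}{(H$_4$)} drives both copies into a prescribed neighborhood of $\hat z$ with uniformly positive probability in boundedly many steps, and a \emph{local contraction} phase near $\hat z$ in which~\hyperlink{H3}{(H$_3$)} is used to build a measure-change coupling of the noise increments that contracts $d_\HH(z_n,z_n')$ by a definite factor with positive probability. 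Concretely, to steer $S(z_n',\eta_{n+1}')$ toward $S(z_n,\eta_{n+1})$ one solves, to leading order, $(D_\eta S)(z_n',\eta_{n+1})\,\delta\eta=S(z_n,\eta_{n+1})-S(z_n',\eta_{n+1})$, which requires $(D_\eta S)(z_n',\eta_{n+1})$ to be approximately surjective onto a prescribed finite-dimensional subspace $V\subset\mathcal T$, and a Lipschitz density on the noise coordinates along $\delta\eta$ to bound the Radon--Nikodym derivative.

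The contraction phase is the only place where~\cite{KNS20} used analyticity of $\eta\mapsto S(z,\eta)$, and is where I would introduce the approximation. Using the basis $\{\varphi_j\}$ from~\hyperlink{H4}{(H$_4$)}, truncate the noise to the span $E_N$ of $\{\varphi_1,\dots,\varphi_N\}$ and, on each slice $\{z\}\times E_N\cap\KK$, approximate $S$ uniformly in $C^2$ by polynomials $S^{(N,k)}$ in the coordinates $(\<\eta,\varphi_j\>)_{j\leq N}$; such polynomials are analytic in $\eta$. A direct check shows that $S^{(N,k)}$ still satisfies \hyperlink{H1}{(H$_1$)}, \hyperlink{H2}{(H$_2$)}, \hyperlink{H4}{(H$_4$)} and, for $k,N$ sufficiently large, a quantitative finite-dimensional replacement for \hyperlink{H3}{(H$_3$)}. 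The result of~\cite{KNS20} then applies to each $S^{(N,k)}$ and produces a unique stationary measure $\mu^{(N,k)}$ with mixing rate $\gamma_n^{(N,k)}\to 0$.

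\textbf{Main obstacle.} The crux is upgrading~\hyperlink{H3}{(H$_3$)} -- which is purely qualitative -- to an approximation-stable quantitative statement: for every finite-dimensional $V\subset\mathcal T$ and every $\varepsilon>0$, there should exist $\delta>0$ and a measurable family $\{G_z\}_{z\in\XX}$ of subsets of $\KK$ with $\ell(G_z)\geq 1-\varepsilon$ on which the composition of $(D_\eta S)(z,\eta)$ with the projection to $V$ admits a right inverse of norm at most $\delta^{-1}$, uniformly in $z\in\XX$ and in the approximation index $(N,k)$. I would extract such an estimate by combining the compactness of $\XX$, the $C^2$-regularity from~\hyperlink{H1}{(H$_1$)}, and an Egorov-type argument against $\ell$ restricted to the full-measure sets $\KK^z$ from~\hyperlink{H3}{(H$_3$)}. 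With the uniform estimate in hand, the rates $\gamma_n^{(N,k)}$ become uniform in $(N,k)$; tightness on the compact space $\PP(\XX)$ extracts a subsequential weak limit $\mu$ of $\mu^{(N,k)}$, and passing to the limit using $S^{(N,k)}\to S$ in $C^2_{\mathrm{loc}}$ yields~(\ref{E:3.3}) for the original system, together with uniqueness of $\mu$.
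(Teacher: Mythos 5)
Your existence step (Feller + Krylov--Bogolyubov on the compact $\XX$) and the overall coupling architecture are consistent with what the paper does, but the core of your plan --- approximating $S$ by polynomial maps $S^{(N,k)}$ in finitely many noise coordinates and then applying \cite{KNS20} to each approximation --- has a genuine gap at Hypothesis \hyperlink{H3}{(H$_3$)}. The target space $\mathcal{T}=\hH\times{\mathsf T}_y\mM$ is infinite-dimensional (in the application $\hH=V^5$), while $(D_\eta S^{(N,k)})(z,\eta)$ factors through the $N$-dimensional space $E_N$ and so has rank at most $N$; its image can therefore never be dense in $\mathcal{T}$, and the approximating systems violate \hyperlink{H3}{(H$_3$)}. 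Consequently \cite{KNS20} cannot be applied to them as a black box. You acknowledge this by substituting a ``quantitative finite-dimensional replacement'' for \hyperlink{H3}{(H$_3$)}, but \cite{KNS20} is not stated under such a hypothesis: to exploit it you would have to reopen the internal construction (the approximate right inverse of Theorem~2.8 and the coupling of Proposition~2.3 there), at which point approximating $S$ buys you nothing. The same applies to the claimed uniformity of the rates $\gamma_n^{(N,k)}$ in $(N,k)$: those rates are produced by constants internal to the coupling proof (measure of the good noise sets, Radon--Nikodym bounds, contraction factors) and cannot be extracted uniformly from the theorem used as a black box. Secondary unaddressed points: $\XX$ need not be invariant under $S^{(N,k)}$, and the truncated noise law is not $\ell$, so even setting up the approximate Markov family on $\XX$ and passing $\PPPP_n^{(N,k)*}\to\PPPP_n^*$ require further arguments.

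For comparison, the paper never approximates the system. It observes that in \cite{KNS20,KNS20.0} analyticity is used in exactly one place --- the construction of an approximate right inverse for the derivative $\mathbb{A}(z,\eta)=(D_\eta S)(z,\eta)$ on large-measure sets of noises --- and only through the derivative map. Its substitute (Theorem~\ref{p2.5}) keeps the true Tikhonov-regularized inverse $R_\gamma=\mathbb{A}^*(\mathbb{A}\mathbb{A}^*+\gamma I)^{-1}$ built from the true $\mathbb{A}$, and inserts analytic-in-$\eta$ approximations $\mathbb{A}_n$ of $\mathbb{A}$ only inside the scalar functional $\FFFF_{n,\e}$ (a finite sum of defect norms over an $\e_2/4$-net of $B_\VV(0,1)$) that defines the good sets $\KK^z_\e$ and $\DD_\e$; smallness of $\FFFF_{n,\e}$ is then shown to force the true $R_{\gamma_\e}$, projected to finitely many modes, to be an $\e_2$-approximate right inverse, while the uniform-in-$z$ measure bound comes from Lemma~3.1 of \cite{KNS20} rather than an Egorov argument over an approximation index. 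Thus density of the image is only ever invoked for the true derivative, where \hyperlink{H3}{(H$_3$)} actually holds. If you wish to salvage your scheme, the approximation must be made at the level of the derivative operator inside the right-inverse construction (as in Theorem~\ref{p2.5} and Proposition~\ref{PP:7.4}), not at the level of the solution map with truncated noise.
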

The proof of this theorem is discussed in Section~\ref{S:7.2}. It is easy to see that the support of the stationary measure $\mu$ is invariant for the system \eqref{E:3.1}, i.e.,  $S(\supp \mu\times \mathcal{K})\subset \supp \mu$. In what follows, we will assume that $\XX=\supp \mu$.

To simplify notation, we shall write  $R_n((z, \Id_{\mathbb{R}^d}); \zeta_1,\ldots, \zeta_n)$  instead of the couple $(S_n(z; \zeta_1,\ldots, \zeta_n),\aA^n(z; \zeta_1,\ldots, \zeta_n))$ and introduce the space  $\wh{\HH}:=\HH\times \Sl_d(\R)$. To formulate our sufficient condition for positivity of the top Lyapunov exponent of the linear cocycle~\eqref{E:3.2} over the RDS \eqref{E:3.1},
we~additionally assume that the following three hypotheses are satisfied. 
\begin{description}
	\item[\hypertarget{H5}{(H$_5$)} Regularity.]
  {\sl The mapping $\mathcal{A}$ is twice continuously differentiable from $\mathcal{H}\times E$ to~$\Sl_d(\R)$ with bounded derivatives on bounded subsets.}
\end{description}

\begin{description}
	\item[\hypertarget{H6}{(H$_6$)} Approximate controllability of the extended system.]
  {\sl There is an open set~$U\subset\Sl_d(\R)$ and points $z_0, z^\sharp\in \XX$ with the following property: for any~$\varepsilon>0$ and any target matrix $A^\sharp\in U$, there is an integer $m\ge 1$ and controls $\zeta_1,\ldots,\zeta_m\in \mathcal{K}$ such that}
$$
d_{\wh{\HH}}\left(R_m((z_0, \Id_{\mathbb{R}^d}); \zeta_1, \ldots, \zeta_m),(z^\sharp, A^\sharp)\right)<\varepsilon.
$$ 
\end{description}
For any $z\in \XX$, let~$\widehat{\mathcal{K}}^z$ be the collection of all $\eta\in \KK$ for which the derivative 
$$
 (D_\eta   R_1)(z;\eta):E\to \hH \times {\mathsf T}_y\mM\times  {\mathsf T}_{\aA_{\eta, z}}\Sl_d(\R)=:\widehat{\mathcal{T}}
$$ has a dense image in $\widehat{\mathcal{T}}$. Again, $\widehat{\mathcal{K}}^z$ is a Borel subset of~$E$. 

\begin{description}
\item[\hypertarget{H7}{(H$_7$)} Linearization of the extended system.]
{\sl The set~$\widehat{\mathcal{K}}^z$ has full $\ell$-mea\-sure for any $z\in \XX$.
 }
\end{description}
As $\widehat{\mathcal{K}}^z$ is contained in $\mathcal{K}^z$, Hypothesis \hyperlink{H7}{\rm(H$_7$)} is stronger than \hyperlink{H3}{\rm(H$_3$)}.

The main result of this section is the following theorem; its proof is given in the next subsection. 
\begin{theorem}\label{T:3.2}
Suppose that Hypotheses {\rm \hyperlink{H1}{(H$_1$)}--\hyperlink{H7}{(H$_7$)}} are satisfied.
Then there exists a deterministic constant $\lambda_+>0$
such that 
\begin{align}\label{E:3.4}
   \lambda_+=\displaystyle \lim_{n\rightarrow \infty}\frac 1n  \log|\aA^n_{z}|\quad \quad \text{for  $\pP_0\times \mu$-a.e. $(\omega, z)$}.
\end{align}
\end{theorem}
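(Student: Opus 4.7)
The plan is to combine the existence of a mixing stationary measure with the multiplicative ergodic theorem, the refined Furstenberg criterion (Theorem~\ref{T:2.4}), and a rigidity-versus-controllability argument.

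First, I invoke Theorem~\ref{T:3.1}, whose assumptions are exactly (H$_1$)--(H$_4$), to obtain a unique mixing stationary measure $\mu\in\PP(\XX)$. Because $\XX$ is compact and $\aA$ is continuous with values in $\Sl_d(\R)$, the integrability condition~\eqref{E:2.3} is automatic, so Theorem~\ref{T:2.1} produces Lyapunov exponents $\lambda_r<\cdots<\lambda_1=\lambda_+$ and, by Corollary~\ref{C:2.2}, the identity $\lambda_\Sigma=\sum_i m_i\lambda_i=0$. In~particular, the deterministic limit in~\eqref{E:3.4} exists and satisfies $\lambda_+\ge 0$; the task is to rule out the equality $\lambda_+=0$.

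Suppose, toward contradiction, that $\lambda_+=0$. Combined with $\lambda_i\le\lambda_+$ and $\lambda_\Sigma=0$, this forces all $\lambda_i=0$, hence $\lambda_+=\lambda_-$, placing us in the regime of Theorem~\ref{T:2.4}. To invoke that theorem, the main technical step is verifying the H\"older-type smoothing estimate~\eqref{E:ineq1} for the extended transition function $\wh{P}_1$: the idea is to use the decomposability hypothesis (H$_4$) to disintegrate the noise along a basis with Lipschitz one-dimensional marginal densities, and then, in the spirit of the change-of-variables technique underlying Theorem~\ref{T:3.1}, to exploit the linearization hypothesis (H$_7$) to absorb small displacements of the base point $z$ into smooth shifts of the noise. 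The regularity hypotheses (H$_1$) and (H$_5$) will ensure that this change of variables is controllable, while tuning a smoothing parameter produces the required trade-off between $C\|g\|_\infty$ and $c|g|_\gamma$. I~expect this to be the hardest part of the proof. Granted the estimate, Theorem~\ref{T:2.4} delivers a weakly continuous family $\{\nu_z\}_{z\in\supp\mu}\subset\PP(P^{d-1})$ satisfying $(\aA^n_{\bomega,z})_*\nu_z=\nu_{\varphi^n_\bomega z}$ for every $z\in\supp\mu$, every $n\ge 1$, and $\bpP$-a.e.\ $\bomega$.

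Next I extend this invariance from $\bpP$-almost every $\bomega$ to every $\bomega\in\supp\bpP=\mathcal{K}^\N$. Fixing $z$ and $n$, the map
\[
(\zeta_1,\ldots,\zeta_n)\mapsto\bigl(\aA^n(z;\zeta_1,\ldots,\zeta_n)_*\nu_z,\;\nu_{S_n(z;\zeta_1,\ldots,\zeta_n)}\bigr)
\]
is continuous into $\PP(P^{d-1})^2$ by the continuity of $S$, $\aA$ and the weak continuity of $\{\nu_z\}$; since the two components coincide $\ell^n$-almost surely, they coincide on the closed support $\mathcal{K}^n$. Invoking now the controllability hypothesis (H$_6$), for every $A^\sharp\in U$ there exist approximating controls $(\zeta_1,\ldots,\zeta_m)\in\mathcal{K}^m$ driving $(z_0,\Id_{\R^d})$ arbitrarily close to $(z^\sharp,A^\sharp)$; passing to the limit in the invariance relation and using the continuity of $A\mapsto A_*\nu_{z_0}$ yields
\[
(A^\sharp)_*\nu_{z_0}=\nu_{z^\sharp}\qquad\text{for every }A^\sharp\in U.
\]
Fixing any $A_0\in U$, the set $A_0^{-1}U$ is then an open neighborhood of $\Id_{\R^d}$ contained in the stabilizer $\{g\in\Sl_d(\R):g_*\nu_{z_0}=\nu_{z_0}\}$. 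Being a subgroup with non-empty interior, this stabilizer is open, and since $\Sl_d(\R)$ is connected it must equal the whole group. As $\Sl_d(\R)$ acts transitively on $P^{d-1}$ with non-unimodular stabilizer, it admits no invariant probability measure on $P^{d-1}$, producing the desired contradiction and hence $\lambda_+>0$.
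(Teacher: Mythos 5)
Your proposal is correct in outline and, for its first half, follows the paper's Step~1 exactly: Theorem~\ref{T:3.1} under (H$_1$)--(H$_4$) gives the mixing measure $\mu$, compactness of $\XX\times\KK$ gives the integrability condition~\eqref{E:2.3}, and Theorem~\ref{T:2.1} with Corollary~\ref{C:2.2} yields the deterministic limit \eqref{E:3.4} with $\lambda_+\ge0$ and reduces positivity to excluding $\lambda_+=\lambda_-=0$ via Theorem~\ref{T:2.4}. Where you genuinely diverge is the endgame. The paper chooses a \emph{single} matrix $A^\sharp\in U$ with $(A^\sharp)_*\nu_{z_0}\neq\nu_{z^\sharp}$ (quoting, without proof, that $\{A:A_*\nu=\nu'\}$ has empty interior in $\Sl_d(\R)$), and then contradicts the $\bpP$-a.e.\ invariance \eqref{E:2.4} quantitatively on a cylinder event of positive probability built from the controls of (H$_6$). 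You instead first upgrade the a.e.\ invariance to invariance for \emph{all} noise words in $\KK^n$ (two continuous maps agreeing $\ell^{\otimes n}$-a.e.\ agree on $\supp\ell^{\otimes n}=\KK^n$), pass to the limit along the approximating controls to obtain $(A^\sharp)_*\nu_{z_0}=\nu_{z^\sharp}$ for every $A^\sharp\in U$, and then run the stabilizer argument: an open neighborhood of the identity inside the stabilizer of $\nu_{z_0}$ forces the stabilizer to be all of the connected group $\Sl_d(\R)$, contradicting the non-existence of an $\Sl_d(\R)$-invariant probability measure on $P^{d-1}$. This is a valid and somewhat more self-contained variant: it effectively \emph{proves} inline the empty-interior fact that the paper only asserts, at the price of needing the classical non-amenability-type fact about $P^{d-1}$; both routes consume the same inputs (weak continuity of $\{\nu_z\}$, continuity of $S$ and $\aA$, and (H$_6$)).

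The one genuine gap is the H\"older smoothing estimate \eqref{E:ineq1} for the extended transition function, which you only sketch and then ``grant''. In the paper this is precisely Lemma~\ref{E:L3.3} (estimate \eqref{E:ineq2} with exponent $1/2$), and it is the technical heart of the proof: it is established in the Appendix through the coupling construction of Proposition~\ref{esitimate-triple-semigroup}, which in turn rests on Proposition~\ref{PP:7.4} (a quantitative measure-transformation statement with total-variation bound of order $\|z-z'\|^{1/2}$) and on the non-analytic version of the approximate right inverse, Theorem~\ref{p2.5}, where (H$_5$) and (H$_7$) enter quantitatively and where the specific exponent comes from. Your heuristic (shifting the noise using (H$_4$) and the linearization (H$_7$), with a smoothing parameter producing the $C\|g\|_\infty$ versus $c|g|_\gamma$ trade-off) does point in the right direction, but as written it does not constitute a proof of the estimate, and without it Theorem~\ref{T:2.4} cannot be invoked.
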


\subsection{Proof of Theorem~\ref{T:3.2}}\label{S:3.2}

The proof of Theorem~\ref{T:3.2} is divided into two steps: first, we show that the limit $\lambda_+$ in \eqref{E:3.4} exists and is constant for $\pP_0\times \mu$-a.e. $(\omega,z)$, then we prove that~$\lambda_+>0$. To achieve this, we represent the systems \eqref{E:3.1} and~\eqref{E:3.2} in the form discussed in Section~\ref{S:2} and apply the MET together with the improved version of Furstenberg's criterion.

\paragraph{\bf Step 1:} The system \eqref{E:3.1} defines an RDS as in Section~\ref{S:2} by choosing 
\begin{itemize}
	\item[$\bullet$] the probability space
$(\Omega, \mathcal{F},\pP):=(\mathcal{K}, \mathcal{B}(\mathcal{K}), \ell)$,
	\item[$\bullet$] the compact metric space $Z:=\XX$,
	\item[$\bullet$] the mapping $\varphi: \Omega\times Z\to Z$, $(\omega,z)\mapsto S(z,\omega)=:\varphi_\omega(z)$ and its compositions $\varphi^n_\bomega$ defined by \eqref{E:2.1},
	\item[$\bullet$] the product space $(\bOmega, \bFF, \bpP):=(\mathcal{K}, \mathcal{B}(\mathcal{K}), \ell)^{\mathbb{N}}$ with elements written as $\bomega=(\omega_1,\omega_2,\ldots)$ and the shift $\theta\bomega=(\omega_{2}, \omega_{3},\ldots)$, 
	\item[$\bullet$] the generator $\aA:\Omega\times Z \to \Sl_d(\R)$ and its compositions $\aA^n_{\bomega, z}$ defined by~\eqref{E:2.2}.

	\end{itemize}
By Theorem~\ref{T:3.1}, this RDS has a unique stationary measure $\mu\in \PP(\XX)$ which is mixing in the sense that \eqref{E:3.3} holds. Furthermore, the integrability condition~\eqref{E:2.3} is satisfied as $\aA:  \KK\times \XX\rightarrow \Sl_d(\R)$ is continuous and $\XX$ and~$\KK$ are compact. Thus, by Theorem~\ref{T:2.1},
there exists a deterministic constant $\lambda_+$
such that
$$
   \lambda_+=\displaystyle \lim_{n\rightarrow \infty}\frac 1n  \log|\aA^n_{\bomega, z}| \quad \quad \text{for  $\bpP\times \mu$-a.e. $(\bomega, z)$}.
$$This clearly implies \eqref{E:3.4}. As  $\aA$ is  $\Sl_d(\R)$-valued, we have $\la_+\ge0.$

\paragraph{\bf Step 2.} To prove the positivity of the exponent $\la_+$, we use Theorem~\ref{T:2.4}. The conditions of that theorem are fulfilled due to the following result whose proof is deferred to Section~\ref{SS:7.2.2}.  
\begin{lemma}\label{E:L3.3}
   Under Hypotheses {\rm \hyperlink{H1}{(H$_1$)}}, {\rm \hyperlink{H4}{(H$_4$)}}, {\rm \hyperlink{H5}{(H$_5$)}}, and {\rm \hyperlink{H7}{(H$_7$)}}, for any $c>0$ there is a constant $C:=C(c)>0$ such that 
\begin{equation}\label{E:ineq2}
\left|\int \wh{P}_1(z_1, \dd y) g(y)-\int \wh{P}_1(z_2, \dd y)g(y)\right|\le \left(C\|g\|_{\infty}+c|g|_{\gamma}\right)d_\XX(z_1, z_2)^\frac12	
\end{equation}
for any $ g\in C_b^{\frac12}(\XX \times  \Sl_d(\R))$ and $z_1, z_2\in \XX$, where  $\wh{P}_1$ is the extended transition function defined by \eqref{E:extend}.
\end{lemma}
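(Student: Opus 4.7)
\emph{Strategy.}
Write $\int g\,\mathrm{d}\wh P_1(z,\cdot)=\E\bigl[g(R_1(z,\eta))\bigr]$ with $R_1(z,\eta):=(S(z,\eta),\aA_{\eta,z})$. My plan is to adapt the coupling/change-of-variables argument from \cite{KNS20,KNS20.0}—originally designed for the Markov semigroup of $S$ alone—to the extended one-step map $R_1$. For $z_1,z_2\in\XX$, I will introduce a random noise shift $\tau=\tau(z_1,z_2,\eta)\in E$, supported in finitely many Fourier modes, and decompose
\begin{equation*}
\int g\,\mathrm{d}\wh P_1(z_1,\cdot)-\int g\,\mathrm{d}\wh P_1(z_2,\cdot)=I_1+I_2,
\end{equation*}
where $I_1=\E\bigl[g(R_1(z_1,\eta))-g(R_1(z_2,\eta+\tau))\bigr]$ and $I_2=\E\bigl[g(R_1(z_2,\eta+\tau))-g(R_1(z_2,\eta))\bigr]$. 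The first term will be absorbed into $|g|_{1/2}$ via continuity of coupled trajectories; the second into $\|g\|_\infty$ via a Girsanov-type density perturbation.

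\emph{Construction of $\tau$ and estimates.}
By the $C^2$ regularity in (H$_1$) and (H$_5$), Taylor expansion in $z$ gives $R_1(z_1,\eta)-R_1(z_2,\eta)=D_zR_1(z_2,\eta)(z_1-z_2)+O\bigl(d_\XX(z_1,z_2)^2\bigr)$. By (H$_7$), the derivative $D_\eta R_1(z_2,\eta):E\to\widehat{\mathcal{T}}$ has dense image for $\ell$-a.e.\ $\eta$; combined with the decomposability (H$_4$), for any prescribed $\delta>0$ one can measurably select $\tau\in\lspan(\varphi_1,\dots,\varphi_{N(\delta)})$ with $\|\tau\|_E\le C_\delta\,d_\XX(z_1,z_2)$ such that
\begin{equation*}
\bigl\|D_\eta R_1(z_2,\eta)\tau-\bigl(R_1(z_1,\eta)-R_1(z_2,\eta)\bigr)\bigr\|\le\delta\,d_\XX(z_1,z_2).
\end{equation*}
Taylor expansion in $\eta$ then yields $d_{\wh\HH}\bigl(R_1(z_1,\eta),R_1(z_2,\eta+\tau)\bigr)\le\delta\,d_\XX(z_1,z_2)+C_\delta'\,d_\XX(z_1,z_2)^2$, whence $|I_1|\le|g|_{1/2}\bigl(\delta\,d_\XX(z_1,z_2)+C_\delta'\,d_\XX(z_1,z_2)^2\bigr)^{1/2}$. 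For $I_2$, (H$_4$) expresses $\eta$ through independent coordinates with Lipschitz densities $\rho_j$; since $\tau$ perturbs only the first $N(\delta)$ of them by random amounts of order $d_\XX(z_1,z_2)$, a change-of-measure estimate based on the Lipschitz $\rho_j$ delivers $|I_2|\le K_\delta\,\|g\|_\infty\,d_\XX(z_1,z_2)$.

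\emph{Balancing and main obstacle.}
Given $c>0$, choose $\delta=\delta(c)$ with $\delta^{1/2}\le c/2$. A case-split on $d_\XX(z_1,z_2)$—using the coupling for small distances (where the higher-order $C_\delta'\,d^2$ term is dominated by $\delta\,d$) and the trivial bound $2\|g\|_\infty$ for distances bounded away from zero (which is admissible after absorbing into $d_\XX(z_1,z_2)^{1/2}$)—then yields the required inequality with $C(c)$ absorbing all constants. The principal difficulty lies in the measurable construction of $\tau$: density of the range of $D_\eta R_1(z_2,\eta)$ does not by itself furnish a continuous right inverse, so one must, as in \cite{KNS20}, project onto the $N(\delta)$-dimensional subspace spanned by the first noise modes and invoke a finite-dimensional surjectivity argument on a full $\ell$-measure subset of $\eta$, which necessitates a careful treatment of the exceptional null set coming from (H$_7$). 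A secondary technical point is propagating the $z$-dependence through the random-shift Girsanov computation for $I_2$, which is precisely where the Lipschitz regularity of the $\rho_j$ in (H$_4$) becomes indispensable.
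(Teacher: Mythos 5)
Your overall architecture—couple $R_1(z_1,\eta)$ with $R_1(z_2,\eta+\tau)$ via an $\eta$-dependent noise shift $\tau$ of size $O(d_\XX(z_1,z_2))$, absorb the coupled term into $|g|_{1/2}$ and the change of measure into $\|g\|_\infty$—is indeed the skeleton of the paper's argument, which runs through the coupling statement of Proposition~\ref{esitimate-triple-semigroup} and the shift construction of Proposition~\ref{PP:7.4}. But two of your key quantitative claims are not justified and, as stated, fail. First, Hypothesis (H$_7$) gives density of the image of $D_\eta R_1(z_2,\eta)$ for $\ell$-a.e.\ $\eta$, and density alone yields no uniform bound $\|\tau\|_E\le C_\delta\,d_\XX(z_1,z_2)$ with $C_\delta$ independent of $\eta$: the norm of an approximate preimage can blow up as $\eta$ ranges over the full-measure set. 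The paper builds an approximate right inverse with a uniform norm bound only on a compact set $\DD_\e$ of pairs $(z,\eta)$ whose $\eta$-sections have measure $\ge 1-\e_1$ strictly less than one (Theorem~\ref{p2.5}, which moreover must be re-proved here because analyticity of $\eta\mapsto S(z,\eta)$ is dropped). The complement is a set of small but \emph{positive} measure, not the ``exceptional null set'' you describe; there $\tau$ must be set to zero, your pathwise bound on $d_{\wh\HH}(R_1(z_1,\eta),R_1(z_2,\eta+\tau))$ fails, and one must instead invoke the Lipschitz dependence of $R_1$ on $z$ (constant $q_0^{-1}$ in the paper) to see that the coupled images are still within $O(d_\XX(z_1,z_2))$, so that this event contributes $O(\e_1)\,|g|_{1/2}\,d_\XX(z_1,z_2)^{1/2}$, absorbable into the $c$-term. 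This repair is exactly the role of the intermediate event $\Omega^{3,n}_{z,z'}$ in the paper's proof, and it is missing from your estimate of $I_1$.

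Second, your bound $|I_2|\le K_\delta\|g\|_\infty\,d_\XX(z_1,z_2)$ is not what the construction delivers. Since $\tau$ depends on $\eta$ and must vanish outside the good set, $|I_2|$ is controlled by $\|g\|_\infty\,\|\Psi^{z_1,z_2}_*\ell-\ell\|_{\textup{var}}$ with $\Psi^{z_1,z_2}(\eta)=\eta+\tau(\eta)$, and the Lipschitz densities of (H$_4$) only handle the in-set density/Jacobian change; the dominant obstruction is the discontinuity of $\tau$ across the boundary of the good set, whose contribution is governed by the $\ell$-measure of a tubular neighbourhood of a level set of the functional $\FFFF_\e$. Proving that this measure is linear in the width—so that the optimized total variation bound becomes $C\,d_\XX(z_1,z_2)^{1/2}$, which is inequality \eqref{E:aa3} and the reason the lemma has exponent $1/2$ rather than $1$—requires the Sard-theorem/regular-value machinery of Lemma~\ref{l3.3}, Lemma~\ref{l2.6} and Corollary~\ref{T:4.3}. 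Your linear-in-$d$ claim ignores this boundary effect entirely; the weaker $d^{1/2}$ bound would suffice for \eqref{E:ineq2}, but it is precisely the part that needs proof, and no argument for it is offered. So the proposal captures the right strategy but leaves out the two mechanisms (uniform approximate right inverse on a non-full-measure set, and the boundary estimate in the measure transformation) that constitute the technical core of the paper's proof.
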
 

 Arguing by contradiction, let us assume that $\lambda_+=0$. From Corollary~\ref{C:2.2} it follows that $\lambda_+=\lambda_-=0$, which, combined with Theorem~\ref{T:2.4}, implies the existence of a family of weakly continuous measures  $\{\nu_z\}_{z\in \XX}\subset \PP(P^{d-1})$ satisfying the equality \eqref{E:2.4} for any $n\ge 1$, any $z\in 
\XX$, and $\bpP$-a.e. $\bomega$.
 
Let the open set $U\subset\Sl_d(\R)$ and the points $z_0, z^\sharp\in \XX$ be as in Hypothesis~\hyperlink{H6}{\rm(H$_6$)}.
For any given measures $\nu, \nu' \in \PP(P^{d-1})$, the~set
$$
\{ A\in\Sl_d(\R): A_*\nu=\nu'\} 
$$
has an empty interior in $\Sl_d(\R)$. Therefore, there exists $A^\sharp\in U$ such that
$$ 
(A^\sharp)_*\nu_{z_0}\neq \nu_{z^\sharp}.
$$Let us denote
\begin{align}  \label{E:3.5}
    \|(A^\sharp)_*\nu_{z_0}- \nu_{z^\sharp}\|_L^* =: \ve>0,
\end{align}
where $\|\cdot\|_L^*$ is the dual-Lipschitz metric on $\PP(P^{d-1})$.  By Theorem~\ref{T:2.4},
we have the continuity of the maps
 $z\mapsto \nu_z$, $\XX \to  \PP(P^{d-1})$ and 
   $A \mapsto A_* \nu_{z_0}$, $\Sl_d(\R) \to  \PP(P^{d-1})$. 
Hence, there exists $\delta>0$
such that
 \begin{align}  \label{E:3.6}
   \|\nu_{z}-\nu_{z^\sharp}\|_L^* +
   \|(\wh A)_*\nu_{z_0}- (A^\sharp)_* \nu_{z_0}\|_L^*< \frac{\ve}{2} 
\end{align} for any $z\in B_{\XX}(z^\sharp, \delta)$ and $\wh A\in B_{\Sl_d(\R)}(A^\sharp, \delta)$.
Now, according to Hypothesis~{\rm \hyperlink{H6}{(H$_6$)}}, there is an integer $m\ge 1$ and controls $\zeta_1,\ldots,\zeta_m\in \mathcal{K}$ verifying   
\begin{equation}\label{E:3.7}
	 d_{\wh \HH}\left(R_m((z_0, \Id_{\mathbb{R}^d}); \zeta_1, \ldots, \zeta_m) , (z^\sharp, A^\sharp)\right) <\frac{\delta}{2}.
\end{equation}
 For any $\delta'>0$, let the event ${\bf\Gamma}_{\delta'}\in \bFF$ be defined by 
$$
{\bf\Gamma}_{\delta'}=\prod_{l=1}^m B_\KK(\zeta_l,\delta') \times \KK\times \KK\times\ldots.
$$ By the continuity of~$S$ and $\aA$, we have
 \begin{align*}
   & d_{\wh \HH}\left((\varphi_{\bomega}^m{z_0}, \aA^m_{\bomega, z_0}),R_m((z_0, \Id_{\mathbb{R}^d}); \zeta_1, \ldots, \zeta_m)\right)<\frac{\delta}{2} 
\end{align*} for any~$\bomega\in {\bf\Gamma}_{\delta'}$ and sufficiently small $\delta'\in (0,\delta)$.
Therefore, in view of \eqref{E:3.7},  
$$
 d_{\wh \HH}\left((\varphi_{\bomega}^mz_0, \aA^m_{\bomega, z_0}),(z^\sharp, A^\sharp)\right) < \delta, \quad \bomega\in {\bf\Gamma}_{\delta'}.
$$
Taking into account \eqref{E:3.6}, we obtain
$$
    \| \nu_{\varphi_{\bomega}^mz_0}-\nu_{z^\sharp}\|_L^* +
   \|(\aA^m_{\bomega, z_0})_* \nu_{z_0}- (A^\sharp)_*\nu_{z_0}\|_L^*\leq \frac{\ve}{2}.
$$
This, along with \eqref{E:3.5}, yields that
$$
   \|(\aA^m_{\bomega, z_0})_*\nu_{z_0}-\nu_{\varphi_{\bomega}^mz_0}\|_L^*
   \geq \|(A^\sharp)_*\nu_{z_0}-\nu_{z^\sharp}\|_L^* - \frac{\ve}{2}
   = \frac{\ve}{2} >0.
$$
This inequality contradicts the equality \eqref{E:2.4} for $z=z_0$ on the event
  ${\bf\Gamma}_{\delta'}$ which has a
  positive $\bpP$-probability. This contradiction completes the proof of Theorem~\ref{T:3.2}.

\section{Application to the Navier--Stokes system}\label{S:4}

Here, we present a more general version of the Main Theorem formulated in the Introduction and establish its proof by using the abstract result of the previous section, along with the controllability properties obtained in Sections~\ref{S:5} and~\ref{S:6}.

\subsection{Formulation}\label{S:4.1}

 Let us consider the system~\eqref{E:1.1},~\eqref{E:1.2} driven by a random process $\eta$ that has independent and stationary increments and takes values in a finite-dimensional space $\eE$. We assume that the space~$\eE$ has the specific form
$$
\eE:= \textrm{span} \{E_j: j\in \mathcal{I}\},
$$ where $E_j$ are given by
\begin{equation}\label{EEE:f}
E_j(x):=j^{\bot}
\begin{cases}
\cos\langle j, x  \rangle \ \textrm{for} \ j_1>0 \ \textrm{or} \ j_1=0, \ j_2>0, \\
\sin\langle j, x \rangle \ \textrm{for} \ j_1<0 \ \textrm{or} \ j_1=0, \ j_2<0
\end{cases}
\end{equation}
with $j^{\bot}=(-j_2, j_1)$, and the index set $\mathcal{I}\subset \mathbb{Z}^2_*$ is defined by
 \begin{equation}\label{E:4.4}
\mathcal{I}:=\left\{j=(j_1, j_2)\in \mathbb{Z}^2_*: |j_i|\leq 1,\, i=1, 2\right\}.
\end{equation}
 The process $\eta$ is assumed to be of the~form  
\begin{equation}\label{E:4.1}
\eta(t, x)=\sum_{k=1}^{\infty} \mathbb{I}_k(t) \eta_k(t-k+1, x),
\end{equation}
where $\mathbb{I}_k$ is the indicator function of the interval $[k-1, k)$ and $\{\eta_k\}$ is a sequence of i.i.d. random variables in $L^2(J, \eE)$ with $J:=[0,1]$ defined on a probability space $(\Omega_0, \mathcal{F}_0, \bbp_0)$. We denote by~$\ell \in \PP(L^2(J, \eE))$ the common law of the random variables $\{\eta_k\}$ and assume that $\KK:=\supp \ell$ is compact in~$L^2(J, \eE)$. Under these assumptions, for any initial data $(u_0, x)\in V^5\times \bbt^2$, the system~\eqref{E:1.1}, \eqref{E:1.2} has a unique solution~$(u(t), \phi^t)$. The incompressibility condition $\div\, u=0$ implies that the Jacobian $D_x\phi^t$ of $\phi^t$ at any point $x\in \bbt^2$ belongs to~$\Sl_2(\R)$.

The {\it Lagrangian process}~$(u_n, y_n)$ obtained by restricting $(u(t), \phi^t)$ to integer times is Markovian. In what follows, we will study this process within the framework of the previous section by setting~$\HH:= V^5 \times \bbt^2$ and~$E:=L^2(J, \eE)$, and considering the time-1 shift along the trajectories of the system \eqref{E:1.1}, \eqref{E:1.2}:
\begin{equation}\label{E:4.2}
S: \HH\times E\to \HH, \ \  ((u,x), \eta)\mapsto (u_1, y_1)	
\end{equation}
  and the {\it derivative cocycle} generated by the mapping
$$
\aA: \HH\times E\to \Sl_2(\R), \quad  ((u,x), \eta)\mapsto \aA_{\eta, u,x}:=D_x\phi^1.
$$
  We choose~$V^5$ as phase space for the velocity component since then $S$ and $\aA$ are well defined and twice continuously differentiable from $\HH\times E$ to~$V^6\times \bbt^2$ and from  $\mathcal{H}\times E$ to~$\Sl_d(\R)$, respectively, with bounded derivatives on bounded subsets (cf. Hypotheses~\hyperlink{H1}{\rm(H$_1$)} and {\rm \hyperlink{H5}{(H$_5$)}}).  We have the relations
\begin{equation}\label{E:4.3} 
(u_n, y_n) =S((u_{n-1}, y_{n-1}), \eta_n), \ \ n\ge 1.
\end{equation}
Let us define a sequence of sets as follows
$$
 X_0:=\{0\}, \ X_n:=S^u(X_{n-1}, \mathcal{K}),\ n\geq 1
$$with $S^u$ being the velocity component of $S$, and denote by $X$ the closure\footnote{Given our assumptions on the structure of the space $\eE$, the union $\cup_{n\geq0}{X_n}$ belongs to~$V^m$ for any $m\ge1$. 
By defining $X$ as the $V^8$-closure of this union, we simplify the argument of the proof of the approximate controllability of the linearized system in Section~\ref{S:6}. Taking~$V^5$ would also work, provided that an additional result is used on the controllability of linear PDEs with initial data as a control, in the spirit of Proposition 7.2 in~\cite{KNS20}.} in~$V^8$ of the union $\cup_{n\geq0}{X_n}$. It is easy to see that $X$ is compact in~$V^5$ and the set 
$\XX:= X\times \bbt^2 $
is compact in $\HH$ and invariant under the dynamics, that is, $S(\XX\times \mathcal{K})\subset \XX$. Below, we consider the restriction of the RDS~\eqref{E:4.3} to $\mathcal{X}$.

To be able to apply Theorem~\ref{T:3.2}, we need additional assumptions on the law $\ell$.~To this end, let us recall the notion of {\it observable functions} from~\cite{KNS20}.

 \begin{definition}\label{D:4.1}
 Let the finite-dimensional space $\eE$ be endowed with an inner product $(\cdot, \cdot)_{\eE}$ and an orthonormal basis $\{e_j\}_{j\in \mathcal{I}}$. A function $\zeta\in L^2(J,\eE)$ is said to be {\it Lipschitz-observable} if for any Lipschitz-continuous functions $a_j: J\rightarrow \R$, $j\in \mathcal{I}$ and any continuous function $b: J\rightarrow \R$  the equality
 $$
 \sum_{j\in \mathcal{I}}a_j(t)(\zeta(t), e_j)_{\eE}+b(t)=0\ \textrm{in} \ L^2(J,\R)
 $$
implies that $a_j\equiv b\equiv 0$ on $J$ for $j\in \mathcal{I}$. 
\end{definition}
It is easy to see that the observability property does not depend on the choice of the basis $\{e_j\}_{j\in \mathcal{I}}$ (see Remark~4.2 in~\cite{KNS20}). In what~follows, we~denote by $\{e_j\}_{j\in \mathbb{Z}^2_*}$ the orthonormal basis in $V^5$ obtained by normalizing the functions $E_j$ in \eqref{EEE:f}.~The following hypothesis gathers all the requirements on the sequence   $\{\eta_k\}$.

\begin{description} 
\item[\hypertarget{N}{(N)}] 
{\sl The random variables $\{\eta_k\}$ are of the form
$$
\eta_k(t,x)=\sum_{j\in \mathcal{I}} \sum_{l=1}^\ty b_l \xi_{jl}^k \psi_l(t)e_j(x),
$$where 
\begin{itemize}
	\item $\mathcal{I} \subset \mathbb{Z}^2_*$ is the set defined by \eqref{E:4.4}, 
	\item  $\{b_l\}$ are non-zero numbers such that 
$\sum_{l=1}^\ty b_l^2<\ty$, 
	\item  $\{ \xi_{jl}^k\}$ are independent scalar random variables such that $| \xi_{jl}^k| \le  1$ a.s. and 
$$
\DD(\xi_{jl}^k)=\rho_{jl}(r)\dd r
$$ with Lipschitz-continuous density $\rho_{jl}$ such that $\rho_{jl}(0)>0$, 
	\item   $\{\psi_l\}$ is an  orthonormal basis in $L^2(J,\R)$ such that~$\psi_1(t)=1$,~$t\in J$.
\end{itemize}
 Moreover, $\{\eta_k\}$ are observable a.s. in the sense of Definition~\ref{D:4.1}.} 
\end{description} 
In particular, this hypothesis guarantees that the support $\KK$ of the law $\ell$ of random variables $\{\eta_k\}$ is compact in $V^5$ and $0\in\KK$.

The following is a more general version of the Main Theorem formulated in the Introduction.
\begin{theorem}\label{T:4.2}
Under Hypothesis {\rm \hyperlink{N}{(N)}}, the Lagrangian process $(u_n, y_n)$ has a unique stationary measure $\mu\times \leb \in \PP(\mathcal{X})$ which is mixing and its support is equal to $\XX$. Moreover, there exists a deterministic constant $\lambda_+>0$ such that the following limit holds:
$$
\lambda_+=\lim_{n\rightarrow \infty}\frac 1n\log|D_x\phi^n|
$$for $\mu\times \leb\times \pP_0$-a.e. $(u,x,\omega)\in \XX\times \Omega_0$. 
\end{theorem}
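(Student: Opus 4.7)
The plan is to apply the abstract Theorem~\ref{T:3.2} (with its companion Theorem~\ref{T:3.1}) to the RDS $(S,\aA)$ introduced in Section~\ref{S:4.1}, acting on the compact space $\XX=X\times\bbt^2$ with i.i.d.\ noise $\eta_k\in\KK\subset L^2(J,\eE)$. The entire proof therefore reduces to verifying Hypotheses \hyperlink{H1}{(H$_1$)}--\hyperlink{H7}{(H$_7$)} in this setting, after which Theorem~\ref{T:3.1} produces a unique mixing stationary measure on $\XX$ and Theorem~\ref{T:3.2} delivers the deterministic constant $\la_+>0$ with the stated a.s.\ limit. The regularity hypotheses \hyperlink{H1}{(H$_1$)} and \hyperlink{H5}{(H$_5$)} follow from combining parabolic smoothing of the Navier--Stokes semigroup (so that $S$ and $\aA$ gain one Sobolev derivative in the velocity, yielding a compactly embedded $\vV=V^6\times\bbt^2\hookrightarrow\hH$) with smooth dependence of the ODE \eqref{E:1.1} on its velocity field, which is regular enough thanks to $V^5\hookrightarrow C^4$. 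The decomposability hypothesis \hyperlink{H4}{(H$_4$)} is immediate from \hyperlink{N}{(N)}: take the orthonormal basis $\{\psi_l\otimes e_j\}_{(j,l)\in\mathcal I\times\N}$ of $E$ with weights $b_l$ and scalar coefficients $\xi_{jl}^k$.

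For the base controllability I would choose $\hat z=(0,0)\in\XX$. Since $0\in\KK$, applying the trivial control repeatedly drives $u_n\to 0$ in $V^5$ by viscous dissipation, after which a further step with a small suitable control in $\KK$---exploiting the fact that $\{E_j\}_{j\in\mathcal I}$ together with their Lie brackets span $\R^2$ at every point of $\bbt^2$---can transport the particle arbitrarily close to $0$, giving \hyperlink{H2}{(H$_2$)}. For \hyperlink{H3}{(H$_3$)} I would follow the strategy of Section~4 of~\cite{KNS20}, using the a.s.\ observability built into \hyperlink{N}{(N)} and a backward run of the adjoint linearised Navier--Stokes equation to show density of the image of $(D_\eta S)(z,\eta)$ in $V^5$; the extra position coordinate ${\mathsf T}_y\bbt^2=\R^2$ is handled because a perturbation of $\eta$ in the direction $\psi_l\otimes e_j$ produces, through the velocity ODE, a perturbation of the trajectory which the modes $\{E_j\}_{j\in\mathcal I}$ make dense in $\R^2$.

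The heart of the proof---and the main obstacle---is the verification of the extended controllability Hypotheses \hyperlink{H6}{(H$_6$)} and \hyperlink{H7}{(H$_7$)} for the triple $(u,y,D_x\phi)$, which are carried out in Sections~\ref{S:5} and~\ref{S:6}. The difficulty is that the control acts only on the velocity equation, so the Jacobian $D_x\phi\in\Sl_2(\R)$ is driven only indirectly through the time-ordered exponential of $\nabla u$ along the Lagrangian trajectory: $\tfrac{d}{dt}D_x\phi^t=(\nabla u)(t,\phi^t)\,D_x\phi^t$. For \hyperlink{H6}{(H$_6$)} the plan is to construct control sequences in $\KK$ that first steer $(u,y)$ close to a chosen target $(z^\sharp,\cdot)$ and then, by a carefully chosen additional segment of the forcing, sculpt the velocity gradient along the trajectory so that the resulting multiplicative integral approximates any prescribed $A^\sharp$ in a suitable open set $U\subset\Sl_2(\R)$ (a natural choice being a neighbourhood of a matrix reachable via exponentials of trace-free elements generated by the available Fourier modes). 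Hypothesis \hyperlink{H7}{(H$_7$)} is a Malliavin-type density statement requiring that variations of $\eta$ produce a dense subspace of $\hH\times{\mathsf T}_y\bbt^2\times{\mathsf T}_{\aA_{\eta,z}}\Sl_2(\R)$, and is obtained by extending the observability argument to the augmented state, tracking the derivative of the Jacobian cocycle with respect to the noise; this is the most delicate technical step and I expect the bulk of Sections~\ref{S:5}--\ref{S:6} to be devoted to it.

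Once Hypotheses \hyperlink{H1}{(H$_1$)}--\hyperlink{H7}{(H$_7$)} are verified, Theorem~\ref{T:3.1} produces a unique mixing stationary measure $\mu^\XX$ on $\XX$. To identify $\mu^\XX=\mu\times\Leb$ I would observe that $(u_n)$ is itself a Markov process whose unique stationary measure is $\mu$ (the main mixing result of~\cite{KNS20} under \hyperlink{N}{(N)}), so the velocity marginal of $\mu^\XX$ equals $\mu$; on the other hand, the incompressibility $\div\,u=0$ makes $\Leb$ invariant under $\phi^1(\cdot;u)$ for every realisation, so $\mu\times\Leb$ is itself stationary, and by uniqueness coincides with $\mu^\XX$. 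Its support is clearly $X\times\bbt^2=\XX$. Finally, Theorem~\ref{T:3.2} provides the deterministic $\la_+>0$ and the $\mu\times\Leb\times\pP_0$-a.s.\ limit, completing the proof.
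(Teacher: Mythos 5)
Your proposal is correct and follows essentially the same route as the paper: the proof of Theorem~\ref{T:4.2} consists precisely in verifying Hypotheses \hyperlink{H1}{(H$_1$)}--\hyperlink{H7}{(H$_7$)} (regularity from parabolic smoothing, \hyperlink{H2}{(H$_2$)} and the support identification via Corollary~\ref{C:5.6}, \hyperlink{H4}{(H$_4$)} from \hyperlink{N}{(N)}, \hyperlink{H6}{(H$_6$)} via Theorem~\ref{T:5.1}, \hyperlink{H7}{(H$_7$)} via Theorem~\ref{T:6.1}) and then invoking Theorems~\ref{T:3.1} and~\ref{T:3.2}, exactly as you outline. Your sketches of the controllability steps differ only in minor heuristics (e.g.\ the paper uses explicit shear flows rather than Lie-bracket considerations), but these belong to Sections~\ref{S:5}--\ref{S:6}, which both you and the paper treat as the delegated technical core.
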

The Haar coloured noise mentioned in the Introduction satisfies Hypothesis~{\rm \hyperlink{N}{(N)}}. Indeed, the observability is shown in Section 5.2 in \cite{KNS20}, and the remaining properties follow directly from the construction. Consequently, the Main Theorem is a particular case of Theorem~\ref{T:4.2}.

The ergodicity of the Lagrangian process in the case of a non-degenerate bounded noise has been shown in~\cite{JNPS21}.

\subsection{Proof of Theorem~\ref{T:4.2}}

 Theorem~\ref{T:4.2} is established by applying Theorem~\ref{T:3.2} within the framework described in the previous subsection. Thus, it is necessary to check the validity of Hypotheses \hyperlink{H1}{\rm(H$_1$)}--\hyperlink{H7}{\rm(H$_7$)}. The regularity properties~\hyperlink{H1}{\rm(H$_1$)} and {\rm \hyperlink{H5}{(H$_5$)}} with $\hH=V^5$ and  $\vV=V^6$ follow from the parabolic regularization of the Navier--Stokes system and the smooth dependence of the solution on the right-hand side and the initial data (e.g., see~\cite{BV92}). The approximate controllability of the nonlinear system to a point~\hyperlink{H2}{(H$_2$)} is verified according to Corollary~\ref{C:5.6}, the decomposability property~\hyperlink{H4}{(H$_4$)} follows immediately from Hypothesis~\hyperlink{N}{(N)}, and the approximate controllability of the linearized system~\hyperlink{H7}{(H$_7$)} is checked in Theorem~\ref{T:6.1}.~As already mentioned,~\hyperlink{H7}{(H$_7$)} implies~\hyperlink{H3}{(H$_3$)}.
 Theorefore, the conditions of Theorem~\ref{T:3.1} are satisfied, so~$\mu\times \leb$ is a mixing stationary measure for the Lagrangian process~$(u_n,y_n)$.~Corollary~\ref{C:5.6} implies that~$\supp \,(\mu\times \leb)=\XX$. Finally, the approximate controllability of the extended system {\rm \hyperlink{H6}{(H$_6$)}} is verified in 
Theorem~\ref{T:5.1}. Thus, applying Theorem~\ref{T:3.2}, we complete the proof of Theorem~\ref{T:4.2}.

 \subsection{Projective process}

Let us consider the projective equation 
\begin{equation}\label{EE:vt}
	  \dot{v}_t   = \Pi_{v_t} D_xu(t,y(t)) v_t,  
\end{equation}where $\Pi_{v}$ is the orthogonal projection onto ${\mathsf T}_{v} P$ (the tangent to the projective space $P$ at $v$).~Using the ergodicity of the projective process $(u_n,y_n,v_n)$, one can derive the following improvement of Theorem~\ref{T:4.2} in the spirit of~Corollary~1.7 in~\cite{BBS22}.
   \begin{theorem}\label{T:4.3T}
Under the conditions of Theorem~\ref{T:4.2}, the projective process $(u_n, y_n,v_n)$ has a unique stationary measure. Moreover, for any non-zero vector $v\in \R^2$, the following limit holds:
\begin{equation}\label{E:projl}
\lambda_+=\lim_{n\rightarrow \infty}\frac 1n\log|D_x\phi^nv|	
\end{equation}
for $\mu\times \leb\times \pP_0$-a.e. $(u,x,\omega)\in \XX\times \Omega_0$. 
\end{theorem}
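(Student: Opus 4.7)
The plan is to apply Theorem~\ref{T:3.1} to the projective Markov chain $(u_n, y_n, v_n)$ on the compact phase space $\widetilde{\XX} := X \times \bbt^2 \times P^1$, and then to combine the resulting uniqueness and mixing with the MET of Theorem~\ref{T:2.1} and Birkhoff's theorem in order to identify the limit in \eqref{E:projl} with $\lambda_+$.

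\emph{Projective RDS and the easy hypotheses.} I would set $\hH = V^5$, $\mM = \bbt^2 \times P^1$, and $E = L^2(J, \eE)$, and define the one-step map
\[
\widetilde{S}\big((u, x, v), \eta\big) := \big(u_1,\, y_1,\, D_x\phi^1 v / |D_x\phi^1 v|\big),
\]
so that the projective chain falls into the framework of Section~\ref{Subsec-Gener} with compact invariant set $\widetilde{\XX}$. Regularity \hyperlink{H1}{\rm(H$_1$)} follows from \hyperlink{H5}{\rm(H$_5$)} together with the smoothness of the normalization map $\Sl_2(\R) \times P^1 \to P^1$, $(A, [v]) \mapsto [Av]$, and decomposability \hyperlink{H4}{\rm(H$_4$)} is inherited verbatim from \hyperlink{N}{\rm(N)}. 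For the linearized controllability \hyperlink{H3}{\rm(H$_3$)}, the orbit map $A \mapsto [Av]$ is a surjective submersion $\Sl_2(\R) \to P^1$ for every $v$, so the density furnished by \hyperlink{H7}{\rm(H$_7$)} for the triple $(u, y, A)$ projects to the required density for $(u, y, v)$.

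\emph{Projective controllability to a point.} The key new step is \hyperlink{H2}{\rm(H$_2$)}: approximate controllability of the projective triple to a fixed target $(\hat u, \hat x, \hat v)$. Starting from an arbitrary direction $v_0$, the reachable set $\{[Av_0] : A \in U\}$ is a non-empty open subset of $P^1$, so by covering $P^1$ with a finite family of such open sets and chaining several applications of \hyperlink{H6}{\rm(H$_6$)} (each preceded by the nonlinear controllability to $z_0$ from Corollary~\ref{C:5.6} to reset the $(u, y)$-position), one steers any initial direction approximately to $\hat v$ at $(\hat u, \hat x)$. With \hyperlink{H1}{\rm(H$_1$)}--\hyperlink{H4}{\rm(H$_4$)} verified, Theorem~\ref{T:3.1} yields a unique stationary measure $\widetilde\mu \in \PP(\widetilde\XX)$ with mixing in the dual-Lipschitz metric; by uniqueness its $(u, x)$-marginal equals $\mu \times \leb$.

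\emph{Identification of the limit.} From the telescoping
\[
\log|D_x\phi^n v| = \log|v| + \sum_{k=0}^{n-1}\log|D_{y_k}\phi^1 v_k|,
\]
and Birkhoff's theorem applied to the ergodic chain $(u_n, y_n, v_n)$ with the bounded continuous integrand $\log|\aA_{\eta, u, x} v|$, one obtains, for $\widetilde\mu \times \pP_0$-a.e.\ $(u, x, v, \omega)$,
\[
\lim_{n\to\infty}\tfrac{1}{n}\log|D_x\phi^n v| = \Lambda := \int\!\!\int \log|\aA_{\eta, u, x} v|\,d\ell(\eta)\,d\widetilde\mu.
\]
By Furstenberg's formula $\Lambda$ is a Lyapunov exponent of the cocycle; since $\aA$ takes values in $\Sl_2(\R)$ the only candidates are $\pm\lambda_+$, and $\Lambda = -\lambda_+$ is excluded by the fact that the forward projective dynamics attracts generic directions to the unstable Oseledets line $E^+$, so mixing forces the $P^1$-conditional of $\widetilde\mu$ to live on $E^+$ and not on $E^-$. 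Hence $\Lambda = \lambda_+$. To upgrade ``$\widetilde\mu$-a.e.'' to ``any fixed $v \neq 0$ and $\mu \times \leb \times \pP_0$-a.e.\ $(u, x, \omega)$'', I would apply the MET of Theorem~\ref{T:2.1}: for each fixed $v$ the limit exists a.e.\ and equals $\lambda_+$ except on the exceptional set $\{v \in E^-(u, x, \omega)\}$, which has measure zero because the law of the random line $E^-$ in $P^1$ is non-atomic --- a property one expects to deduce from the uniqueness of $\widetilde\mu$ together with a standard Furstenberg-type argument (using the backward cocycle to transfer non-atomicity from $E^+$ to $E^-$).

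\emph{Main obstacle.} I expect the hardest step to be the verification of the projective controllability property \hyperlink{H2}{\rm(H$_2$)}: \hyperlink{H6}{\rm(H$_6$)} only provides approximate reachability of directions in an open subset of $P^1$ that depends on the starting direction $v_0$, so reaching a single prescribed $\hat v$ from arbitrary $v_0$ requires a careful chaining of controlled segments while managing the drift of the $(u, y)$-component. A secondary but non-trivial point is the non-atomicity of the law of $E^-$ needed for the upgrade from $\widetilde\mu$-a.e.\ to fixed-$v$ statements.
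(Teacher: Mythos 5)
Your reduction of the unique ergodicity statement to Theorem~\ref{T:3.1}, with $\hH=V^5$, $\mM=\bbt^2\times P$, (H$_1$) and (H$_4$) immediate, (H$_3$) obtained by projecting (H$_7$) through the submersion $A\mapsto [Av]$, and (H$_2$) from the nonlinear controllability of the extended system, is exactly the paper's route (the paper is equally brief here, observing that the matrix-level controllability of Sections~\ref{S:5}--\ref{S:6} is the harder statement and the projective one follows by projection). One minor weakness: chaining the abstract hypothesis (H$_6$) over a covering of $P$ by reachable open sets only shows that each initial direction reaches \emph{some} open set of directions; to verify (H$_2$) you need one fixed target $(\hat u,\hat x,\hat v)$ reachable from \emph{all} initial data in a uniform number of steps, and for that it is cleaner to use Proposition~\ref{P:5.3} and Theorem~\ref{T:5.1} in full strength (implement the rotation carrying the current direction to $\hat v$ while holding $(u,y)$ at $(0,\wt y)$, with uniformity in $m$ by compactness of $P$ and Corollary~\ref{C:5.6}).

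The genuine gap is in the identification of the limit for every fixed $v\neq0$. The paper disposes of this in one stroke by citing Theorem~III.1.2 of \cite{Kif86}, which says precisely that unique ergodicity of the projectivized chain yields, for every nonzero $v$, the a.s.\ limit $\lambda_+$. Your hand-made substitute leaves two essential claims unproved. First, the exclusion of $\Lambda=-\lambda_+$ ``because mixing forces the $P$-conditional of $\widetilde\mu$ to live on $E^+$'' is not meaningful as stated: $\widetilde\mu$ is deterministic while the Oseledets spaces are random, and for the one-sided cocycle the contracting line $E^-$ is measurable with respect to the \emph{entire future} noise, so relating the disintegration of $\widetilde\mu$ to random attracting directions requires sample-measure machinery you have not set up. Second, the upgrade to fixed $v$ rests on non-atomicity of the conditional law of $E^-$, which you merely assert; it does not follow from uniqueness of the forward projective stationary measure, because $E^-$ obeys the backward equivariance $E^-(\bomega,z)=(\aA_{\omega_1,z})^{-1}E^-(\theta\bomega,\varphi_{\omega_1}(z))$ in which the matrix and the line are dependent, so $\E\,\delta_{E^-(\cdot,z)}$ is not a stationary measure of the forward chain and $\widetilde\mu$ gives no direct hold on it (a Furstenberg-type non-atomicity argument can be made, but it is real work that is missing). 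Both difficulties disappear if you argue as in the cited theorem: for a Feller chain on a compact space with a unique stationary measure, the occupation measures started from \emph{any} fixed point converge a.s.\ weakly to $\widetilde\mu$; writing $\frac1n\log|D_x\phi^n v|$ as $\frac1n\sum_{k<n}\bar f(u_k,y_k,v_k)$ plus a bounded-increment martingale, where $\bar f(u,x,v):=\int\log\bigl(|\aA_{\eta,u,x}v|/|v|\bigr)\,\ell(\dd\eta)$ is continuous on $\XX\times P$, you obtain the limit $\Lambda=\langle\bar f,\widetilde\mu\rangle$ for every fixed $v\neq0$ and a.e.\ $(u,x,\omega)$, and $\Lambda=\lambda_+$ follows at once since by Theorem~\ref{T:2.1} some direction realizes $\lambda_+$ --- with no need to discuss $E^\pm$ or non-atomicity at all.
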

The ergodicity of the projective process $(u_n,y_n,v_n)$ is established using Theorem~\ref{T:3.1} in a manner similar to that of the Lagrangian process $(u_n,y_n)$. Here, the phase space is taken as~$\HH= V^5 \times \mM$, where $\mM=\bbt^2\times P$, and the verification of Hypotheses \hyperlink{H1}{\rm(H$_1$)}--\hyperlink{H3}{\rm(H$_3$)} proceeds similarly. The controllability of the nonlinear system follows from a result similar to Corollary~\ref{C:5.6}, while the controllability of the linearized system is as in Theorem~\ref{T:6.1}. It should be noted that the controllability of the nonlinear or linearized matrix process considered in  Sections~\ref{S:5} and~\ref{S:6} is more complex than for the projective process, due to the higher dimensionality of the object being controlled. Since the relevant arguments are largely similar, we omit the detailed derivation.

The limit \eqref{E:projl} follows from the ergodicity of the projective process combined with a version of the multiplicative ergodic theorem given in Theorem~III.1.2 in~\cite{Kif86}.

\section{Controllability of the nonlinear Lagrangian flow}\label{S:5}

In what follows, we use the notation of the previous sections and
 assume that the conditions of Theorem~\ref{T:4.2} are satisfied. 
The~aim of this section is to check the validity of Hypotheses \hyperlink{H2}{(H$_2$)} and~{\rm \hyperlink{H6}{(H$_6$)}} for the Lagrangian process. To this end, we consider the controllability of the nonlinear system  with Navier--Stokes equations written in the projected~form:
\begin{gather} 
  \partial_t u + L u + B(u)= \eta(t,x),   \label{E:5.1} \\
  \dot{y}(t)  = u(t,y(t)), \label{E:5.2}\\
  \dot{A}(t)  = D_xu(t,y(t)) A(t), \label{E:5.3}
\end{gather}supplemented with the initial condition
\begin{equation}\label{E:5.4}
u(0) =u_0, \quad 	  y(0) =y_0, \quad A(0) = \Id_{\R^2}.
\end{equation}Here $L:= -\nu \Delta$ is the Stokes operator, $B(u):= \Pi \left( \<u, \na\> u\right)$ is the nonlinear term, and  $\Pi:L^2(\bbt^2,\R^2)\to H$ is the Leray projection. For any $(u_0, y_0)\in \HH$ and $\eta\in L^2([0,T],\eE)$, the system \eqref{E:5.1}-\eqref{E:5.4}  has a unique solution $( u, y, A)$ in $C([0,T],\wh{\HH})$, where $\wh{\HH}:=\HH\times \Sl_2(\R)$. The following result implies that Hypothesis~{\rm \hyperlink{H6}{(H$_6$)}} is verified. 

\begin{theorem} \label{T:5.1}
For any $\ve>0$, any initial data $(u_0,y_0) \in \XX$, and any target 
$(u^\sharp, y^\sharp, A^\sharp) \in \XX \times\Sl_2(\R)$,
there is $m\ge 1$ and controls $\zeta_1, \ldots, \zeta_m\in \mathcal{K}$ such that
\begin{equation}   
d_{\wh{\HH}}\left(R_m((u_0, y_0, \Id_{\R^2}); \zeta_1, \ldots, \zeta_m) , (u^\sharp, y^\sharp, A^\sharp)\right) < \ve.\label{E:5.5}
\end{equation}
\end{theorem}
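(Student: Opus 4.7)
The idea is to concatenate three control phases driven by controls in $\mathcal{K}$. Since Hypothesis~\hyperlink{N}{(N)} ensures $0 \in \mathcal{K}$, the first phase applies $\zeta_k \equiv 0$ for $n_1$ steps, so that parabolic dissipation of the Navier--Stokes system~\eqref{E:5.1} brings $\|u_{n_1}\|_{V^5}$ below any prescribed threshold while $(y, A)$ drifts to some intermediate configuration $(\tilde y, \tilde A)$. In the second phase, with the velocity small, I choose $n_2$ controls of the form $\zeta_k(t) = \sum_{j\in\mathcal{I}} \alpha_j^k(t) E_j \in \mathcal{K}$ to drive $(y, A)$ to an intermediate target $(\tilde y', \tilde A')$. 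In the third phase, I exploit the fact that $u^\sharp \in X$ is a $V^8$-limit of states reachable from $0$ by finitely many applications of controls in $\mathcal{K}$ to steer $u$ from near $0$ to within $\varepsilon$ of $u^\sharp$; these controls also perturb $(y, A)$, but by choosing $(\tilde y', \tilde A')$ in Stage~2 as a pre-image of a small neighborhood of $(y^\sharp, A^\sharp)$ under the Stage-3 flow, the final configuration is $\varepsilon$-close to $(u^\sharp, y^\sharp, A^\sharp)$.

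The heart of the argument is Stage~2. For small velocity, the ODEs \eqref{E:5.2}--\eqref{E:5.3} are, to leading order, governed by the control-induced velocity field, so the problem reduces to the approximate controllability of the control-affine system on the 5-dimensional manifold $\bbt^2 \times \Sl_2(\R)$ with control vector fields
\[
Y_j(y, A) := \bigl(E_j(y),\ D_y E_j(y)\, A\bigr), \quad j \in \mathcal{I}.
\]
By the Chow--Rashevskii theorem, approximate controllability follows once the Lie algebra generated by $\{Y_j\}_{j\in\mathcal{I}}$ spans the full 5-dimensional tangent space at every point. The eight low-frequency trigonometric fields $E_j$ for $j\in\mathcal{I}$, together with iterated brackets exploiting the nonlinearity of $E_j$ in $y$ and the matrix-valued structure of the Jacobian component, should yield the two independent directions on $\bbt^2$ and the three independent directions on $\Sl_2(\R)$ required at every base point.

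The main obstacles are twofold. First, establishing the bracket-generating property requires an explicit Lie-bracket computation using the trigonometric form~\eqref{EEE:f} of $E_j$, and verifying spanning uniformly in $(y, A) \in \bbt^2 \times \Sl_2(\R)$. Second, the three stages cannot truly be decoupled: Stage~2 controls themselves excite the velocity, and the reduction to the control-affine ODE system is only approximate. Closing the argument requires continuity estimates for the Navier--Stokes flow and for the Jacobian ODE~\eqref{E:5.3} with respect to initial data and controls, and a careful ordering of error tolerances across the three stages, which are absorbed into the $\varepsilon$ in~\eqref{E:5.5}.
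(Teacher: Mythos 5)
Your three-stage architecture (dissipate to near zero velocity with $\zeta\equiv 0$; control $(y,A)$ while the velocity is small; then use the reachability structure defining $X$ to hit $u^\sharp$, having pre-composed with the inverse of the Stage-3 flow on $(y,A)$) is exactly the skeleton of the paper's proof of Theorem~\ref{T:5.1}. The difference, and the problem, is Stage~2, which is where all the actual work lies and which you leave as a plan. Three specific points are missing. (i) The spanning property of the lifted fields $Y_j(y,A)=(E_j(y),D_yE_j(y)A)$ is asserted but not checked; in fact no Lie brackets are needed, since the eight fields with $j\in\mathcal{I}$ already span the $5$-dimensional tangent space at every point (the single-index modes give the two torus directions together with the nilpotent directions $N_{12}A$, $N_{21}A$, and the diagonal modes $j=(\pm1,\pm1)$ supply the missing trace-free diagonal direction because $\sin(x_1+x_2)$ and $\cos(x_1+x_2)$ never vanish simultaneously) --- but this verification is part of the proof, not a remark. (ii) Your reduction to the control-affine ODE system is not exact as formulated: exciting several modes $E_j$ simultaneously makes $B(u)\neq0$, so the velocity leaves $\eE$, and nothing in your plan returns $u$ to (near) zero at integer times; the error terms you propose to ``absorb into $\varepsilon$'' are of the same order in the control amplitude as the effects you rely on if bracket motions were needed, so the bookkeeping is genuinely delicate rather than routine. (iii) Membership of your controls in $\mathcal{K}=\supp\ell$ is asserted, not proved: Hypothesis~(N) forces a specific temporal structure ($\zeta$ must expand in the basis $\{\psi_l\}$ with coefficients in the supports of the densities $\rho_{jl}$, hence only small amplitudes are guaranteed), and large displacements of $A$ in $\Sl_2(\R)$ cannot be produced in one step.

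The paper sidesteps all three issues by an explicit construction: it activates one Fourier mode per unit time interval, so the forced velocity is a single shear flow, which is an exact solution of \eqref{E:5.1} (the nonlinearity vanishes identically), and the temporal profile $g=\nu a_1+a_j\psi_j$ with $\int_0^1 e^{\nu s}\psi_j(s)\,\dd s\neq0$ is chosen so that $u(0)=u(1)=0$ while $\int_0^1 f=a_1$ is prescribed --- this is precisely how the constraint $\zeta\in\mathcal{K}$ and the return of $u$ to zero are met simultaneously. Position and matrix motions are then decoupled by phase choices (the particle sits where $D_xu$ vanishes when moving $y$, and at $\wt y=(\pi/2,\pi/2)$ where $u$ vanishes when moving $A$), the matrix is steered through transvections $T_{12}(\alpha/m_\alpha)$, $T_{21}(\beta/m_\beta)$ repeated many times to respect the small-amplitude constraint, and Lemma~\ref{L:5.4} (transvections generate $\Sl_2(\R)$) finishes the job. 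If you want to salvage your geometric-control route, the fix is essentially to adopt these devices: one mode at a time (making your reduced system exact), the $\psi_1$/$\psi_j$ trick for $\mathcal{K}$-admissibility and $u(1)=0$, and many small steps in place of one large one; at that point your argument converges to the paper's, with the rank computation above replacing the explicit transvection construction.
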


As a preparation for the proof of this theorem, we establish two propositions. The first one shows that it is possible to control exactly
 the particle position~$y(t)$ while keeping the velocity $u(t)$ and the matrix $A(t)$ in the same place.   
\begin{proposition}\label{P:5.2} 
There is an integer $m\ge1$ with the following property: for any $y_0, y^\sharp \in \bbt^2$ and $A_0\in \Sl_2(\R)$, there are controls $ \zeta_1,\ldots,  \zeta_{m}\in \mathcal{K}$ such~that
$$
 R_{m}((0,y_0, A_0);\zeta_1, \ldots, \zeta_{m})=(0, y^\sharp, A_0).
$$
\end{proposition}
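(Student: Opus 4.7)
The Navier--Stokes system on $\bbt^2$ is invariant under spatial translations, and $\eE$ is translation-invariant as well, since $\cos\langle j,x-a\rangle=\cos\langle j,a\rangle\cos\langle j,x\rangle+\sin\langle j,a\rangle\sin\langle j,x\rangle$ (and the analogous identity for sine) shows $E_j(x-a)\in\mathrm{span}\{E_j,E_{-j}\}\subset\eE$. Hence we may reduce to $y_0=0$. Similarly, the substitution $\widetilde A(t):=A(t)A_0^{-1}$ satisfies the same matrix ODE with initial condition $\Id_{\R^2}$, and reaching $A(m)=A_0$ is equivalent to reaching $\widetilde A(m)=\Id_{\R^2}$. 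The task therefore becomes: produce a fixed $m\geq 1$ such that for every $y^\sharp\in\bbt^2$ there are $\zeta_1,\ldots,\zeta_m\in\mathcal{K}$ with
\[
R_m\bigl((0,0,\Id_{\R^2});\zeta_1,\ldots,\zeta_m\bigr)=(0,y^\sharp,\Id_{\R^2}).
\]

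\textbf{Topological scheme.} Fix $m$ large enough (to be determined) and define the reachable set
\[
Y_m:=\{y^\sharp\in\bbt^2\,:\,\exists\,(\zeta_1,\ldots,\zeta_m)\in\mathcal{K}^m,\ R_m((0,0,\Id_{\R^2});\zeta_1,\ldots,\zeta_m)=(0,y^\sharp,\Id_{\R^2})\}.
\]
Then $0\in Y_m$ via the zero control, and closedness of $Y_m$ follows from the compactness of $\mathcal{K}^m$ in $L^2([0,m],\eE)$ together with the continuity of the solution map. If we can also establish openness, the connectedness of $\bbt^2$ yields $Y_m=\bbt^2$ and we are done.

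\textbf{Openness via linearization.} Fix $y^*\in Y_m$ realized by $\zeta^*\in\mathcal{K}^m$, with reference trajectory $(u^*,y^*,A^*)$ ending at $(0,y^*,\Id_{\R^2})$. Consider the smooth map $F:\zeta\mapsto R_m((0,0,\Id_{\R^2});\zeta)$ and its differential at $\zeta^*$: a perturbation $\delta\zeta$ produces a linearized triple $(\delta u,\delta y,\delta A)$ solving the linearized Navier--Stokes equation forced by $\delta\zeta$, the linearized particle ODE along the reference trajectory, and the linearized matrix equation. The claim is that the image of $DF(\zeta^*)$ contains $\{0\}\times T_{y^*}\bbt^2\times\{0\}$: for every $v\in\R^2$ there exists $\delta\zeta\in L^2([0,m],\eE)$ with $\delta u(m)=0$, $\delta y(m)=v$, and $\delta A(m)=0$. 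Granted this, the richness of the tangent cone to $\mathcal{K}^m$ at $\zeta^*$—guaranteed by Hypothesis~\hyperlink{N}{(N)} since all coefficients $b_l$ are nonzero and $\rho_{jl}(0)>0$—combined with a surjection theorem of Graves type (adapted for the one-sided constraint $\zeta\in\mathcal{K}^m$) yields exact reachability of a neighbourhood of $y^*$ in $Y_m$, which is the desired openness.

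\textbf{Main obstacle.} The decisive step is the linear controllability claim. The dimension count is encouraging—three scalar conditions from $\delta A(m)=0$ (since $A^*(m)\in\Sl_2(\R)$ sits in a three-dimensional group) and two prescribed components from $\delta y(m)=v$, against an infinite-dimensional time-dependent control—but the infinite-codimensional constraint $\delta u(m)=0$ is genuinely the hard part and cannot be dispatched by dimension counting alone. The natural route is a duality argument: identify the adjoint backward system and use unique continuation for the linearized Navier--Stokes equation together with the observability built into Hypothesis~\hyperlink{N}{(N)} to exclude nontrivial obstructions. An alternative is an explicit construction exploiting the time-modulation afforded by the Haar basis $\{\psi_l\}$ to produce prescribed particle drift while returning the linearized velocity and Jacobian corrections to zero. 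Carrying out either strategy, and combining it cleanly with the topological scheme, is the principal technical hurdle of the proof.
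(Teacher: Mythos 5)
Your argument is a strategy outline rather than a proof, and the decisive steps are missing. The central claim of the "openness via linearization" step — that for every $v\in\R^2$ there is $\delta\zeta\in L^2([0,m],\eE)$ with $\delta u(m)=0$, $\delta y(m)=v$, $\delta A(m)=0$ exactly — is exactly the hard content of the proposition and you leave it unproved (you say so yourself). But even granting that claim, the passage to nonlinear exact reachability does not go through as stated. A Graves/Lyusternik-type surjection theorem would require the differential of $\zeta\mapsto(u(m),y(m),A(m))$ to be surjective onto the full target, and with forcing confined to the four-dimensional space $\eE$ the $V^5$-component of the differential has dense but not closed range (only approximate controllability holds, as in Section~\ref{S:6}); surjectivity onto $\{0\}\times T_{y^*}\bbt^2\times\{0\}$ alone does not let you land exactly on the infinite-codimensional slice $\{u(m)=0,\,A(m)=\Id\}$ after the nonlinear correction. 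In addition, the constraint $\zeta\in\mathcal{K}^m$ is treated too optimistically: $\mathcal{K}=\supp\ell$ is a compact "Hilbert brick''-type set whose admissible variations in the direction of the temporal mode $\psi_l$ are of size $O(b_l)\to0$, and at a general reference control $\zeta^*$ (needed for openness at a general point of your reachable set $Y_m$) the coefficients may sit on the boundary of $\supp\rho_{jl}$, so only one-sided variations are available; Hypothesis \hyperlink{N}{(N)} with $\rho_{jl}(0)>0$ only gives room near the zero control. So both the linear controllability claim and the constrained implicit-function step need substantial new arguments.

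For comparison, the paper's proof is elementary and entirely explicit, and sidesteps all of these issues: it uses shear flows $u(t,x)=f(t)(\cos(x_2-y_{02}),0)^\TTT$ (and the analogous vertical one), which are exact solutions of \eqref{E:5.1} with forces lying in $\eE$ because the nonlinearity vanishes identically on shear flows. Choosing $f$ through $g(t)=\nu a_1+a_j\psi_j(t)$ with $f(0)=f(1)=0$ and $\int_0^1 f=a_1$ moves the particle exactly by $a_1$ in one coordinate while $u(0)=u(1)=0$, and $D_xu(t,y(t))\equiv0$ along the trajectory, so $A$ is untouched; the control has only two nonzero coefficients, which are small for small displacements, hence it lies in $\mathcal{K}$ by $\rho_{jl}(0)>0$ near the zero control — the only place where membership in $\mathcal{K}$ must be checked. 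A compactness argument on $\bbt^2$ then concatenates finitely many such small horizontal and vertical shifts, giving a uniform $m$. If you want to salvage your topological scheme, you would in effect have to prove an exact local controllability statement far stronger than what the paper needs; the explicit shear-flow construction is the intended (and much shorter) route.
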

\begin{proof}
Using a simple compactness argument and the fact that $u\equiv 0$ is a solution of the Navier--Stokes system~\eqref{E:5.1}, we see that it suffices to prove the following local version of the result: there is a number $\kappa>0$ such that, for any~$y_0,  y^\sharp\in \bbt^2$ with $|y_0-y^\sharp|<\kappa$, any $ A_0\in \Sl_2(\R)$,  and appropriate controls~$ \zeta_1,  \zeta_2\in \mathcal{K}$, we have
$$
S_2((0,y_0);  \zeta_1, \zeta_2)= (0, y^\sharp) \quad \text{and}\quad    A(t) \equiv  A_0,\ \   t\in [0, 2].
$$   

 First, let us show that we can shift the $y$-component horizontally; that is, 
  let us take sufficiently close points $y_0, \hat{y}\in \bbt^2$ of the form $y_0=(y_{01}, y_{02})$ and $\hat{y}=(y_1^\sharp, y_{02})$ and construct $\zeta_1\in \mathcal{K}$ such that $S((0,y_0);\zeta_1)=(0, \hat{y})$. Indeed, since~$\{\psi_l\}_{l\geq1}$ is an orthonormal basis in $L^2(J,\R)$ with $\psi_1=1$ almost surely (see Hypothesis~\hyperlink{N}{(N)}), we have that $\int_0^1\psi_l(s)\dd s=0$ for any $l\geq2$, and
there is~$j\geq2$ such that~$\int_0^1e^{\nu s}\psi_j(s)\dd s\neq0$, where~$\nu>0$ is the viscosity in \eqref{E:5.1}. Then, the function~$g$ defined by 
\begin{equation}\label{E:5.6}
g(t):=\nu a_1 +a_j\psi_j(t), \quad t\in J 
\end{equation}
 with coefficients $a_1$ and $a_j$ given by 
 \begin{equation}\label{E:5.7}
 a_1:= y_1^\sharp-y_{01} \ \ \textrm{and} \ \ a_j:=\frac{a_1(1-e^{\nu})}{\int_0^1e^{\nu s}\psi_j(s)\dd s}
 \end{equation}
satisfies the relations
$$
\int_0^1g(s)\dd s=\nu a_1, \ \
\int_0^1e^{\nu s}g(s)\dd s=0.
$$
It follows that the function 
\begin{equation}\label{E:5.8}
f(t):= \int_0^te^{-\nu(t-s)}g(s)\dd s, \quad t\in J
\end{equation}
satisfies
\begin{gather*}
	f(0)=f(1)=0, \ \ \int_0^1f(s)\dd s=a_1,\\
f'(t)+\nu f(t)=g(t), \quad t\in J.
\end{gather*}
Based on this, it is easy to see that for the shear flow defined by 
$$
   u(t,x) := f(t)\begin{pmatrix} \cos (x_2-y_{02}) \\ 0\end{pmatrix}, \quad (t,x)\in J\times \bbt^2
$$
the following properties hold: 
\begin{itemize}
	\item[(a)]  $u(0)= u(1)=0$,
	\item[(b)]  the solution $y(t)=(y_1(t),y_2(t))$ of
	\begin{equation*}
\left\{ \begin{aligned} 
  &  \dot{y}(t) = u(t, y(t)),    \\
    & y(0) =  y_0
\end{aligned}\right.
\end{equation*}
satisfies $y_1(1) =  y_1^\sharp$  and $y_2(t) = y_{02}$, $t\in [0,1]$,
	\item[(c)] $u(t,x)$ is a solution of the Navier--Stokes system \eqref{E:5.1} with control $\zeta_1$ given explicitly by 
 \begin{align*}
\zeta_1(t,x)&=(f'(t)+\nu f(t))\cos y_{02} 
                \begin{pmatrix}
                       \cos x_2 \\
                       0 
                     \end{pmatrix}
\\&\quad+ (f'(t)+\nu f(t))\sin y_{02}
                     \begin{pmatrix}
                       \sin x_2 \\
                       0 \\
                     \end{pmatrix}
                  \\
&=g(t)\cos y_{02}                  \begin{pmatrix}
 \cos x_2 \\
 0 \\
\end{pmatrix}
+ g(t)\sin y_{02}
            \begin{pmatrix}
             \sin x_2 \\
             0 \\
             \end{pmatrix}                   .
\end{align*}
 \end{itemize}
 Let us check that $\zeta_1$ belongs to $\mathcal{K}$ provided that $|y_0-\hat y|<\kappa \ll 1$. Indeed, if $ \kappa$ is sufficiently small, then also~$a_1$ and $a_j$ are small enough (cf.~\eqref{E:5.7}). Taking into account the assumption that~$\rho_{lj}(0)>0$ (see Hypothesis {\rm \hyperlink{N}{(N)}}), 
  we~derive that $\zeta_1\in \mathcal{K}$. 
  
  To see that the control $\zeta_1$ does not affect the  matrix flow, we note that the above-defined shear flow $u$ satisfies
$$
   D_xu(t,x)
= f(t) 
     \begin{pmatrix}
       0 &  -\sin(x_2-y_{02}) \\
       0 & 0 \\
     \end{pmatrix}.
$$
Evaluating along the Lagrangian flow $y(t)$ appearing in property (b), we~derive
\begin{align*}
  \dot{A}(t) =  D_xu(t,y(t)) = 0,
\end{align*}
which yields that $A(t) = A_0$ for   $t\in [0, 1]$.

In a similar way, we can shift the $y$-component vertically; that is,
 for sufficiently close points $\hat{y},  y^\sharp\in \bbt^2$ of the form $\hat{y}:=(y_1^\sharp, y_{02})$ and $ y^\sharp:=(y_1^\sharp,y_2^\sharp)$, we choose $g$ as in \eqref{E:5.6} with $a_1:=y_2^\sharp-y_{02}$ in~\eqref{E:5.7}. Then, we define $f$ as in~\eqref{E:5.8} and take the shear flow
$$
   u(t,x) = f(t)  
                     \begin{pmatrix}
                      0 \\
                       \cos (x_1-  y_1^\sharp)  \\
                     \end{pmatrix}
                   .
$$
As above, the following properties are satisfied: 
\begin{itemize}
	\item[(a$^\prime$)]  $u(0)= u(1)=0$;
	\item[(b$^\prime$)]  the solution $y(t)=(y_1(t),y_2(t))$ of 
	\begin{equation*}
\left\{ \begin{aligned} 
  &  \dot{y}(t) = u(t, y(t)),    \\
    & y(0) =  \hat{y}
\end{aligned}\right.
\end{equation*}
verifies $y_1(t)=  y_1^\sharp$, $t\in J$ and $y_2(1) =  y_2^\sharp$;
	\item[(c$^\prime$)] $u(t,x)$ is a solution of \eqref{E:5.1} with control given by 
	$$
   \zeta_2(t,x) = g(t)\cos  y_1^\sharp\begin{pmatrix}
                     0 \\
                       \cos x_1 \\
                     \end{pmatrix}
               +g(t)\sin  y_1^\sharp   \begin{pmatrix}
                      0 \\
                       \sin x_1\\
                     \end{pmatrix}
                   .
$$
 \end{itemize}
 For $\kappa$ small enough, we have $\zeta_2\in \mathcal{K}$, and the corresponding matrix flow satisfies $A(t) =  A_0$ for   $t\in [0, 1]$. Hence, combining the horizontal and vertical shifts, we conclude that, for any~$y_0, y^\sharp\in \bbt^2$ with $|y_0- y^\sharp|<\kappa \ll 1$, we have $S_2((0, y_0); \zeta_1, \zeta_2)=(0, y^\sharp)$.  This completes the proof.
\end{proof}
 The second proposition shows that it is possible to control exactly the matrix~$A(t)$,
 while keeping the velocity $u(t)$ and the particle position~$y(t)$ 
   in the same place.     
\begin{proposition}\label{P:5.3}
For any matrices $ A_0,  A^\sharp\in\Sl_2(\R)$,
there is an integer $m\ge1$ and controls  $\zeta_1,\ldots,\zeta_{m}\in \mathcal{K}$
such that
\begin{equation}
 \label{E:5.9}
    R_{m}((0, \wt y, A_0); \zeta_1,\ldots,\zeta_{m}) = (0,\wt y, A^\sharp),
\end{equation}where $\wt y:= (\pi/2, \pi/2)\in\bbt^2$.
\end{proposition}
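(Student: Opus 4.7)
The strategy is to keep the Eulerian velocity at $0$ at integer times and the particle pinned at $\wt y$, while steering the matrix $A$ from $A_0$ to $A^\sharp$ by composing several unit-time substeps, each of which acts only on the matrix component. In each substep I would construct a velocity field $u(t,x)$ on $[0,1]\times\bbt^2$ satisfying: (i) $u(0,\cdot)=u(1,\cdot)=0$; (ii) $u(t,\wt y)=0$ for all $t\in[0,1]$; and (iii) $D_x u(t,\wt y)=f(t)M_0$ for a prescribed $M_0\in\mathfrak{sl}_2(\R)$ and scalar profile $f(t)$. Under (ii) the trajectory started at $\wt y$ is constant, so \eqref{E:5.3} reduces to $\dot A=f(t)M_0 A$, giving $A(1)=\exp(cM_0)A_0$ with $c:=\int_0^1 f(s)\,\dd s$.

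For each of the three generators of $\mathfrak{sl}_2(\R)$ I would use an explicit low-mode ansatz. The shear flows $u(t,x)=f(t)(\cos x_2,0)$ and $u(t,x)=f(t)(0,\cos x_1)$ both vanish at $\wt y$ and produce $D_x u(t,\wt y)=-f(t)E$ and $-f(t)F$ respectively, where $E,F$ are the nilpotent generators; their transport $\langle u,\na\rangle u$ vanishes identically because each is independent of its only nonzero component's advection direction. The divergence-free flow $u(t,x)=f(t)(\cos x_1\sin x_2,-\sin x_1\cos x_2)$ also vanishes at $\wt y$ and yields $D_x u(t,\wt y)=-f(t)H$ with $H=\mathrm{diag}(1,-1)$; a direct computation gives $\langle u,\na\rangle u=\tfrac14 f^2(t)\na(\cos 2x_1+\cos 2x_2)$, a pure gradient annihilated by the Leray projection $\Pi$. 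Hence in all three cases $u$ solves~\eqref{E:5.1} with explicit force $\zeta=\partial_t u+Lu$; the spatial profile is a combination of the $E_j$ with $j\in\mathcal I$ (e.g.\ $(\cos x_2,0)=-E_{(0,1)}$, and the third flow expands into $E_{(-1,1)}$ and $E_{(-1,-1)}$), and the temporal profile equals $f'+\alpha\nu f$ with $\alpha\in\{1,2\}$.

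To realize a prescribed small $c$ while keeping $\zeta\in\mathcal K$, I would imitate the device in the proof of Proposition~\ref{P:5.2}: pick $j\ge 2$ with $\int_0^1 e^{\alpha\nu s}\psi_j(s)\,\dd s\neq 0$, set $g(t):=\alpha\nu c+a_j\psi_j(t)$ with $a_j$ chosen so that $\int_0^1 e^{\alpha\nu s}g(s)\,\dd s=0$, and define $f(t):=\int_0^t e^{-\alpha\nu(t-s)}g(s)\,\dd s$. Then $f(0)=f(1)=0$, $\int_0^1 f=c$, and $f'+\alpha\nu f=g$, so the only nonzero Fourier coefficients of $\zeta$ in the $\{\psi_l\}$-basis are those of $\psi_1$ and $\psi_j$, both linear in $c$. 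Since $\rho_{jl}(0)>0$ and $b_l\neq 0$ by Hypothesis~\hyperlink{N}{(N)}, these coefficients lie in $b_l\cdot\supp\rho_{jl}$ whenever $|c|<\delta$ for some fixed $\delta>0$ independent of the generator, placing $\zeta$ in $\mathcal K$.

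Finally, to assemble the full map, observe that $\{\exp(cM):M\in\{E,F,H\},\ |c|<\delta\}$ contains a neighborhood of $\Id_{\R^2}$ in $\Sl_2(\R)$, since $\{E,F,H\}$ is a basis of $\mathfrak{sl}_2(\R)$ and the exponential map is a local diffeomorphism at $0$. The subgroup it generates is therefore open, and by connectedness of $\Sl_2(\R)$ it is the whole group. Hence $A^\sharp A_0^{-1}$ admits a factorization $\exp(c_m M_{i_m})\cdots\exp(c_1 M_{i_1})$ with $|c_k|<\delta$ and $M_{i_k}\in\{E,F,H\}$, and concatenating the $m$ substeps constructed above produces~\eqref{E:5.9}. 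The main obstacle is the simultaneous matching of all constraints on $\zeta$ in each substep—Navier--Stokes compatibility, spatial support in $\mathcal I$, and a time profile realizable within the noise expansion of Hypothesis~\hyperlink{N}{(N)}—which is resolved by the explicit low-mode ansatz (ensuring $B(u)$ is killed by $\Pi$) combined with the Proposition~\ref{P:5.2}-style profile construction.
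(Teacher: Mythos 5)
Your construction is correct, and it checks out in the details I verified: the cellular flow $u=f(t)(\cos x_1\sin x_2,-\sin x_1\cos x_2)$ is divergence-free, lies in $\lspan\{E_{(-1,1)},E_{(-1,-1)}\}\subset\eE$, vanishes at $\wt y$, has $D_xu(t,\wt y)=-f(t)\,\mathrm{diag}(1,-1)$, and its transport term is an exact gradient killed by $\Pi$, so $\zeta=(f'+2\nu f)\cdot(\text{profile})$ indeed solves \eqref{E:5.1}; the profile construction with $\int_0^1e^{\alpha\nu s}\psi_j\,\dd s\neq0$ reproduces the paper's device and keeps $\zeta\in\KK$ for $|c|$ small. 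Your route differs from the paper's: the paper uses only the two shear flows (the nilpotent generators), reduces the matrix control to transvections via Lemma~\ref{L:5.4} ($A^\sharp A_0^{-1}$ is a product of four shear matrices), and handles large transvection parameters by repeating the same unit-time control $m_\alpha$ times (Lemma~\ref{L:5.5}); you instead add the diagonal generator $H$ through the cellular flow and invoke a Lie-group generation argument for products of small exponentials of a basis of $\mathfrak{sl}_2(\R)$. What the paper's approach buys is that no third flow and no compatibility check on $B(u)$ beyond the shears is needed, and the algebraic step is completely elementary; what yours buys is a more conceptual and generalizable factorization (any basis of the Lie algebra realizable by admissible flows would do), at the cost of the extra verification that the cellular flow's nonlinearity is projected away. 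One sentence of yours needs repair: the set $\{\exp(cM):M\in\{E,F,H\},\,|c|<\delta\}$ is a union of three curves and does \emph{not} contain a neighborhood of $\Id_{\R^2}$; the correct justification is either that the map $(c_1,c_2,c_3)\mapsto\exp(c_1E)\exp(c_2F)\exp(c_3H)$ is a local diffeomorphism at $0$ (so the generated subgroup is open, hence all of $\Sl_2(\R)$ by connectedness), or simply that $\exp(cM)=\exp(c M/n)^n$ shows the generated subgroup contains the full one-parameter subgroups, and the transvections among them already generate $\Sl_2(\R)$ as in Lemma~\ref{L:5.4}; with this patch your factorization into small-exponent factors, and hence the concatenation argument, goes through.
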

The proof of this proposition follows from the below two lemmas. Let us recall that the transvection or shear matrices are of the form
$$
  T_{12}(\alpha)
  = 
     \begin{pmatrix}
       1 & \alpha \\
       0 & 1 \\
     \end{pmatrix}
   , \ \
  T_{21}(\beta)=
     \begin{pmatrix}
       1 & 0 \\
       \beta & 1 \\
     \end{pmatrix}
, \ \
   \alpha, \beta\in \bbr,
$$and let $\Tv_2(\R)$ be the collection of all such matrices:
$$
\Tv_2(\R):=\left\{T_{12}(\alpha), \ T_{21}(\beta),\,\,\alpha, \beta\in 
\bbr\right\}.
$$  
The following lemma is straightforward to verify.
\begin{lemma}\label{L:5.4}
Transvection matrices generate the special group $\Sl_2(\R)$. More precisely, for any $A\in \Sl_2(\R)$, there are matrices $T_1,\ldots, T_{4}\in \Tv_2(\R)$ such that $A=T_1\circ\cdots\circ T_{4}$.
\end{lemma}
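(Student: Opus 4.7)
The plan is to carry out Gaussian elimination on a generic element $A=\begin{pmatrix} a & b \\ c & d \end{pmatrix}\in \Sl_2(\R)$, using the fact that left multiplication by transvections realizes elementary row operations: $T_{12}(\alpha)\circ A$ adds $\alpha$ times the second row to the first, and $T_{21}(\beta)\circ A$ adds $\beta$ times the first row to the second. Since $\det A=1$, once the first column of $A$ is reduced to $(1,0)^\TTT$ the remaining matrix automatically belongs to $\Tv_2(\R)$, which is what makes the bound of four factors natural.

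I would first handle the generic case $c\neq 0$. The candidate identity is
\begin{equation*}
A \;=\; T_{12}\!\left(\tfrac{a-1}{c}\right)\circ T_{21}(c)\circ T_{12}\!\left(\tfrac{d-1}{c}\right),
\end{equation*}
which I would verify by direct multiplication: the product of the last two factors has first column $(1,c)^\TTT$, and after multiplying by $T_{12}((a-1)/c)$ the $(1,1)$-entry becomes $a$, the second column's top entry becomes $(d-1)/c+(a-1)d/c=(ad-1)/c$, and the relation $ad-bc=1$ forces this to equal $b$. This supplies a product of three transvections, and appending $T_{12}(0)=\Id_{\R^2}\in\Tv_2(\R)$ gives the required factorization with exactly four factors.

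Next I would treat the degenerate case $c=0$, in which the relation $ad-bc=1$ forces $a\neq 0$. I would pre-multiply $A$ by $T_{21}(1)$, obtaining a matrix whose lower-left entry equals $a\neq 0$; the previous case then produces a factorization of $T_{21}(1)\circ A$ as a product of three transvections, and inverting yields
\begin{equation*}
A \;=\; T_{21}(-1)\circ T_{12}(\alpha')\circ T_{21}(c')\circ T_{12}(\beta')
\end{equation*}
for the appropriate constants $\alpha',c',\beta'\in\R$, a product of exactly four transvections.

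The statement is entirely classical and the proof offers no real obstacle; the only point requiring care is the algebraic verification of the three-transvection identity in the generic case and the bookkeeping of signs when inverting the auxiliary $T_{21}(1)$ in the degenerate case.
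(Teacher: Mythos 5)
Your proposal is correct: the paper states Lemma~\ref{L:5.4} without proof (calling it straightforward), and your Gaussian-elimination argument is exactly the standard verification it implicitly relies on. The generic identity $A=T_{12}\bigl(\tfrac{a-1}{c}\bigr)\circ T_{21}(c)\circ T_{12}\bigl(\tfrac{d-1}{c}\bigr)$ checks out (the lower row stays $(c,d)$ and $\det A=1$ forces the $(1,2)$-entry $(ad-1)/c$ to equal $b$), the padding by $T_{12}(0)=\Id$ and the reduction of the $c=0$ case via $T_{21}(\pm1)$ are both legitimate, so the four-factor bound is established.
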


\begin{lemma}\label{L:5.5}
For any $\alpha\in \bbr$ and $ A_0\in\Sl_2(\R)$,
there is an integer $m_{\alpha}\ge1$ and controls $\zeta_1,\ldots, \zeta_{m_{\alpha}}\in \mathcal{K}$ such that
\begin{equation}\label{E:5.10}
\begin{aligned}  
   R_{m_{\alpha}}((0, \wt y, A_0); \zeta_1,\ldots, \zeta_{m_{\alpha}})
     &= (0,\wt y, T_{12}(\alpha) A_0),
\end{aligned}
\end{equation}
where $\wt y\in\bbt^2$ is as in Proposition~\ref{P:5.3}.
Similarly, for any $\beta\in \R$, there is an integer $m_{\beta}\ge1$ and controls
 $\zeta_1,\ldots, \zeta_{n_{\beta}}\in \mathcal{K}$ such that
 \begin{equation}\label{E:5.11}
 \begin{aligned}
R_{n_{\beta}}((0, \wt y, A_0); \zeta_1,\ldots, \zeta_{m_{\beta}})
    & = (0,\wt y, T_{21}(\beta) A_0).
\end{aligned}
\end{equation}
\end{lemma}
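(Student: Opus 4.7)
The strategy is to design a one-step shear flow that returns to zero at time $1$, leaves the particle fixed at $\tilde y=(\pi/2,\pi/2)$, and multiplies the matrix $A_0$ on the left by a small transvection $T_{12}(\gamma)$. Iterating this construction $N$ times with $\gamma=\alpha/N$ will produce $T_{12}(\alpha)A_0$. The dual construction will handle $T_{21}(\beta)$.

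For the first step, fix $\gamma\in\R$ small and reuse the scalar functions from the proof of Proposition~\ref{P:5.2}: take the index $j\ge 2$ with $\int_0^1 e^{\nu s}\psi_j(s)\dd s\ne 0$, set $a_1:=\gamma$, $a_j:=\gamma(1-e^\nu)/\int_0^1 e^{\nu s}\psi_j(s)\dd s$, $g(t):=\nu a_1+a_j\psi_j(t)$ and $f(t):=\int_0^t e^{-\nu(t-s)}g(s)\dd s$. As computed there, $f(0)=f(1)=0$, and integrating $f'+\nu f=g$ over $[0,1]$ yields $\int_0^1 f(s)\dd s=a_1=\gamma$. Now consider the shear flow
\[
u(t,x):=f(t)\begin{pmatrix}\sin(x_2-\pi/2)\\ 0\end{pmatrix}=-f(t)\begin{pmatrix}\cos x_2\\ 0\end{pmatrix}.
\]
Because the first component depends only on $x_2$, the nonlinear term $B(u)$ vanishes, so $u$ solves \eqref{E:5.1} with forcing
\[
\zeta(t,x)=\bigl(f'(t)+\nu f(t)\bigr)\begin{pmatrix}\sin(x_2-\pi/2)\\ 0\end{pmatrix}=g(t)\,E_{(0,1)}(x),
\]
where $(0,1)\in\mathcal I$. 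Moreover $u(t,\tilde y)=0$, so the Lagrangian trajectory issued from $\tilde y$ stays at $\tilde y$, and
\[
D_x u(t,\tilde y)=f(t)\begin{pmatrix}0 & \cos 0\\ 0 & 0\end{pmatrix}=f(t)E_{12},\qquad E_{12}:=\begin{pmatrix}0&1\\0&0\end{pmatrix}.
\]
Since $E_{12}^2=0$, the ODE $\dot A(t)=f(t)E_{12}A(t)$ integrates explicitly and gives $A(1)=T_{12}(\gamma)A_0$.

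To ensure $\zeta\in\KK$, observe that $g(t)=\nu a_1\psi_1(t)+a_j\psi_j(t)$ (using $\psi_1\equiv1$), so $\zeta$ matches the structure of an element of $\supp\ell$ described by Hypothesis~\hyperlink{N}{(N)} provided the coefficients $\nu a_1/b_1$ and $a_j/b_j$ lie in $[-1,1]$ and in the support of $\rho_{(0,1),1}$ and $\rho_{(0,1),j}$, respectively. Since $b_1,b_j\ne 0$ and $\rho_{(0,1),l}(0)>0$, there exists $\kappa>0$ such that $\zeta\in\KK$ whenever $|\gamma|\le\kappa$.

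Given a general $\alpha$, choose an integer $N\ge 1$ with $|\alpha|/N\le\kappa$, set $\gamma:=\alpha/N$, and take all $N$ controls $\zeta_1=\cdots=\zeta_N$ equal to the shear control built above. Since at each integer time the velocity returns to $0$ and the particle to $\tilde y$, we can concatenate the $N$ steps, and the matrix factor after $N$ steps is $T_{12}(\gamma)^N A_0=T_{12}(\alpha)A_0$, proving \eqref{E:5.10} with $m_\alpha:=N$. For \eqref{E:5.11}, repeat the construction verbatim with the rotated ansatz $u(t,x)=-f(t)(0,\cos x_1)^\TTT$, whose forcing is a multiple of $E_{(1,0)}\in\eE$ (again in $\mathcal I$), which fixes $\tilde y$ and satisfies $D_x u(t,\tilde y)=f(t)E_{21}$, yielding $T_{21}(\beta/N)$ per step. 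The only delicate point is Step~$3$, namely tracking that the shear control lands in $\KK$, which is why we split any large transvection into $N$ admissible infinitesimal ones; all other calculations are essentially identical to those already used for Proposition~\ref{P:5.2}.
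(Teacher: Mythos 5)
Your proposal is correct and follows essentially the same route as the paper: the same functions $g$ and $f$ borrowed from Proposition~\ref{P:5.2}, the same single-mode shear flows (up to a sign/phase convention) fixing $\wt y$ and producing a nilpotent $D_xu(t,\wt y)$, the same device of splitting the transvection into $N$ small factors $T_{12}(\alpha/N)$ so that the control stays in $\mathcal{K}$, and the same concatenation of identical one-step controls, with the symmetric construction for $T_{21}(\beta)$. No gaps; the argument matches the paper's proof in substance.
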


\begin{proof}
We first prove that for sufficiently large $m_{\alpha}\ge1$, there is $\zeta_1\in \mathcal{K}$ such that $$R_1((0, \wt y,  A_0);\zeta_1)=(0, \wt y, T_{12}({\alpha}/{m_\alpha}) A_0).$$
Indeed, similar to the proof of Proposition~\ref{P:5.2}, we choose $g(t)$ as in \eqref{E:5.6} with $a_1=-{\alpha}/{m_{\alpha}}$ in \eqref{E:5.7}. We again define $f$ as in \eqref{E:5.8} and take the shear~flow
$$
   u(t,x) = f(t) \begin{pmatrix}
                       \cos x_2 \\
                       0 \\
                     \end{pmatrix},
$$
which verifies the following properties:
\begin{itemize}
	\item[(a)]  $u(0)= u(1)=0$,
	\item[(b)]  the solution $y(t)=(y_1(t),y_2(t))$ of
	\begin{equation*}
\left\{ \begin{aligned} 
  &  \dot{y}(t) = u(t, y(t)),    \\
    & y(0) =  \wt y
\end{aligned}\right.
\end{equation*}
satisfies $y(t) = \wt y$, $ t \in J$,  
	\item[(c)] $u(t,x)$ is a solution of \eqref{E:5.1} with control   
\begin{align}\label{E:5.12}
 \zeta_1(t,x) :=& (f'(t)+\nu f(t))
        \begin{pmatrix}
                       \cos x_2 \\
                       0 \\
                     \end{pmatrix} 
          =  g(t) 
             \begin{pmatrix}
                       \cos x_2 \\
                       0 \\
                     \end{pmatrix} .
\end{align}
\end{itemize}
Choosing $m_{\alpha}$ sufficiently large, we make the numbers $a_1$ and $a_j$ in \eqref{E:5.7} small enough to guarantee that $\zeta_1\in \mathcal{K}$. 
Now let us consider the corresponding matrix process
\begin{equation*}
  \left\{
  \begin{aligned}
   & \dot{A}(t) = D_xu(t, y(t)) A(t), \\
   & A(0) =  A_0.
  \end{aligned}
  \right.
\end{equation*}
By direct computation, we see that
\begin{align*}
   A(1) = 
     \begin{pmatrix}
       1 & {\alpha}/{m_{\alpha}} \\
       0 & 1 \\
     \end{pmatrix}A_0=T_{12}({\alpha}/{m_{\alpha}}) A_0.
\end{align*}Thus, we proved the exact controllability of the matrix process $A(t)$ from~$A_0$ to $T_{12}({\alpha}/{m_{\alpha}}) A_0$, while keeping $u(t)$ and $y(t)$ unchanged. Note that the choice of $m_\alpha$ does not depend on $A_0$.
 
 Now to prove \eqref{E:5.10}, let us introduce the points $\Upsilon_k=(0, \wt y, T_{12}({k\alpha}/{m_{\alpha}})A_0)$ with $k=1,\ldots, m_{\alpha}$. Applying the above exact controllability property, we find controls $\zeta_2, \dots, \zeta_{m_{\alpha}} \in \mathcal{K}$ such that $R_1(\Upsilon_{k-1}; \zeta_k)= \Upsilon_k$ for $2\leq k\leq m_{\alpha}$ (actually, we can take $\zeta_k,$ $k=2,\ldots, m_{\alpha}$ equal to $\zeta_1$ in \eqref{E:5.12}). This proves~\eqref{E:5.10}.

In a similar way, for sufficiently large $m_{\beta}\ge 1$,
we use the shear flow
\begin{align*}
   u(t,x)= f(t) 
        \begin{pmatrix}
          0 \\
          \cos x_1 \\
        \end{pmatrix}
\end{align*}
with $f$ defined by \eqref{E:5.8} and $g$ defined by \eqref{E:5.6} with $a_1=-{\beta}/{m_{\beta}}$ to control from $(0,\wt y,A_0)$ to
$(0,\wt y, T_{21}({\beta}/{m_{\beta}}) A_0)$. In this case, the control generated by this shear flow is
$$\zeta_1(t,x)=g(t)
        \begin{pmatrix}
          0 \\
          \cos x_1 \\
        \end{pmatrix},$$
which   belongs to $\mathcal{K}$ for large enough $m_{\beta}$. Moreover, repeating the above argument, we can find controls $\zeta_2, \ldots,
\zeta_{m_{\beta}}\in \mathcal{K}$ that steer the system from $(0,\wt y, T_{21}({\beta}/{m_{\beta}}) A_0)$ to $(0, \wt y, T_{21}(\beta))$.
This completes the proof of \eqref{E:5.11}.
\end{proof}

Now we are ready to prove Proposition~\ref{P:5.3} and Theorem~\ref{T:5.1}.

\begin{proof}[Proof of Proposition~\ref{P:5.3}]
From Lemma~\ref{L:5.4} it follows that 
there are transvection matrices
$T_1, \ldots, T_4 $ 
such that
$A^\sharp= T_4\circ\dots \circ T_1  \circ A_0$.
Let us set
$$
B_0:= A_0, \quad B_j:= T_j\circ\dots\circ T_1\circ  A_0, \quad j=1,\dots,4.
$$ 
By Lemma~\ref{L:5.5}, for any $j=1,\ldots,4$,
there is an integer $m_j\ge1$ and controls $\zeta^j_1,\ldots, \zeta^j_{m_j}$  steering the system from $(0,\wt y, B_{j-1})$ to~$(0,\wt y, B_j)$.
Thus, taking $m=\sum_{j=1}^4m_j$ and
$$
 (\zeta_1, \ldots, \zeta_{m})= (\zeta_1^1,\ldots, \zeta_{m_1}^1,\ldots,\zeta_1^4,\ldots, \zeta^4_{m_4}),
$$
we obtain the required result \eqref{E:5.9}.
\end{proof}

\begin{proof}[Proof of Theorem~\ref{T:5.1}]
 The proof is divided into three steps. 
 \paragraph{\bf Step 1.}By the dissipativity of the Navier--Stokes system, the velocity field~$u(t)$ of the unforced equation converges to zero in $V^5$. As a result, for any $\ve_1>0$,  sufficiently large $m_1\geq1$, and
 any initial state $(u_0, y_0)\in \mathcal{X}$, we~have
$$
  d_{\wh{\HH}}\left(R_{m_1}((u_0, y_0, \Id_{\mathbb{R}^2});0,\ldots,0),
 (0, y_1, A_1)\right)<\ve_1
$$for some $y_1\in \bbt^2$ and $ A_1 \in\Sl_2(\R)$.

\paragraph{\bf Step 2.} By the definition of the set $\mathcal{X}$, 
for any $\varepsilon_3>0$,
there is an integer $m_3\geq1$ such that, for any target $(u^\sharp, y^\sharp, A^\sharp)\in \mathcal{X}\times\Sl_2(\R)$, we can find controls $\zeta_1^3,\ldots,  \zeta^3_{m_3}\in \mathcal{K}$
verifying
$$
 d_{\wh{\HH}}\left(R_{m_3}((0, y_2,  A_2); \zeta^3_1,\ldots,\zeta^3_{m_3}), (u^\sharp,y^\sharp, A^\sharp)\right)<  {\varepsilon_3}
$$for some $ y_2\in \bbt^2$ and $ A_2 \in\Sl_2(\R)$.

\paragraph{\bf Step 3.} By Propositions \ref{P:5.2} and \ref{P:5.3}, for the initial state $(0, y_1, A_1)$ obtained in Step~1  and the target $(0, y_2,  A_2)$ obtained in  Step 2, there exists an integer $m_2\ge1$ and controls $\zeta^2_1,\ldots, \zeta^2_{m_2}\in \mathcal{K}$ such that 
$$ R_{m_2}\left((0, y_1, A_1);\zeta^2_1,\ldots, \zeta^2_{m_2}\right)=(0,  y_2,  A_2). $$
Thus, setting $m=m_1+m_2+m_3$, and  using the continuity of $R_m$, we see that~\eqref{E:5.5} holds with the controls 
$$
(\zeta_1,\ldots, \zeta_m)=(0,\ldots,0,\zeta^2_1,\ldots, \zeta_{m_2}^2,  \zeta_1^3,\ldots,  \zeta^3_{m_3}).
$$  
\end{proof}
Combining Proposition~\ref{P:5.2} with Steps~1 and~2 of the proof of Theorem~\ref{T:5.1}, we obtain the following stronger version of Hypothesis \hyperlink{H2}{\rm(H$_2$)}.
 \begin{corollary}\label{C:5.6}
For any $\varepsilon>0$, there is an integer $m\geq1$ such that, for any~$z_0, z^\sharp\in \mathcal{X}$ and appropriate controls $\zeta_1,\ldots, \zeta_m\in \mathcal{K}$, we have
$$
d_{\HH}\left(S_m(z_0;\zeta_1,\ldots,\zeta_m),  z^\sharp\right)< \varepsilon.
$$
\end{corollary}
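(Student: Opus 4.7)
The plan is to concatenate three controlled trajectories exactly as in the proof of Theorem~\ref{T:5.1}, but now only monitoring the $\HH$-component, since the matrix part plays no role in the statement of the corollary.

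First, I would invoke Step~1 of the proof of Theorem~\ref{T:5.1}: using zero controls for $m_1$ steps and the dissipativity of the unforced 2D Navier--Stokes system in $V^5$, together with compactness of $\XX$, one obtains that $S_{m_1}(z_0; 0,\ldots,0)$ is within $\varepsilon/3$ of $(0, y_1)$ for some $y_1 \in \bbt^2$ depending on $z_0$, with $m_1$ uniform in $z_0 \in \XX$.

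Next, I would apply Step~2 of that same proof: since $z^\sharp = (u^\sharp, y^\sharp) \in \XX$ and $X$ is the $V^8$-closure of $\bigcup_{n\geq 0} X_n$, a uniform approximation argument (using compactness of $X$) produces an integer $m_3$ and controls $\zeta_1^3, \ldots, \zeta_{m_3}^3 \in \KK$, along with some intermediate position $y_2 \in \bbt^2$, such that $S_{m_3}((0, y_2); \zeta_1^3, \ldots, \zeta_{m_3}^3)$ lies within $\varepsilon/3$ of $(u^\sharp, y^\sharp)$ in the $\HH$-distance; the matrix component appearing in Step~2 of Theorem~\ref{T:5.1} can simply be discarded here.

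Finally, Proposition~\ref{P:5.2} yields controls $\zeta_1^2, \ldots, \zeta_{m_2}^2 \in \KK$ that steer the system exactly from $(0, y_1)$ to $(0, y_2)$ in a fixed number $m_2$ of steps while keeping the velocity identically zero. Concatenating the three control sequences with $m = m_1 + m_2 + m_3$ (independent of $z_0$ and $z^\sharp$), and using continuity of $S_m$ to absorb the approximation error from Step~1 when it is propagated through the deterministic Stages~2 and~3, the triangle inequality delivers $d_{\HH}(S_m(z_0; \zeta_1, \ldots, \zeta_m), z^\sharp) < \varepsilon$ after choosing the internal tolerances appropriately. The main subtlety, and hence the principal obstacle to full rigor, is ensuring that $m$ can be chosen uniformly in both $z_0$ and $z^\sharp$: this relies on compactness of $\XX$ in the first two stages (guaranteeing a uniform dissipation time and a uniform covering of $X$ by reachable approximants) and on the universality of $m_2$ provided by Proposition~\ref{P:5.2}.
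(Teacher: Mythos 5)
Your argument is exactly the paper's proof: the authors obtain Corollary~\ref{C:5.6} precisely by combining Proposition~\ref{P:5.2} with Steps~1 and~2 of the proof of Theorem~\ref{T:5.1}, discarding the matrix component and using continuity of the solution map on the compact set $\XX\times\KK^m$ to absorb the intermediate errors. Your attention to the uniformity of $m$ in $z_0$ and $z^\sharp$ (via the uniform dissipation time, the uniform $m_3$ in Step~2, and the fixed $m$ in Proposition~\ref{P:5.2}) is exactly what makes the corollary a stronger version of Hypothesis \hyperlink{H2}{(H$_2$)}, so the proposal is correct.
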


\section{Controllability of the linearized Lagrangian flow}  \label{S:6}

In the present section, we consider the linearized system
\begin{gather} 
     \partial_t v+ Lv + Q(u) v = \zeta, \label{E:6.1} \\
    \dot{z}(t) = (D_xu)(t,y)z + v(t,y),  \label{E:6.2} \\ 
      \dot{B}(t) =   (D^2_{xx}u)(t,y)A z + (D_xu)(t,y) B
   + (D_x v)(t,y) A  \label{E:6.2*} 
\end{gather}with any (control) force $\zeta\in E$ and
supplemented with the initial condition 
$$
(v(0),z(0),B(0))=(0,0,0)\in V^5\times {\mathsf T}_{y(0)}\bbt^2\times  {\mathsf T}_{A(0)}\Sl_2(\R).
$$
Here  
\begin{equation}\label{E:Q}
Q(u) v := \Pi(\<u,\na\> v + \<v,\na\> u)
\end{equation} is the linearization of the nonlinear term in \eqref{E:5.1}, 
$(D^2_{xx}u)(t,y)Az$ is the~matrix defined by  
$$
  (D^2_{xx}u)(t,y)Az:=z_1(\partial_1D_xu)(t,y) A + z_2(\partial_2 D_xu)(t,y)A,
$$
and $(u(t),y(t), A(t))$ is a solution of the nonlinear system \eqref{E:5.1}-\eqref{E:5.3} with initial data  $(u(0),y(0), A(0))\in\XX \times \Id_{\R^2}$ and force $\eta \in E$.
  In what follows, we denote  
$$
\widehat{\mathcal{T}}_s:=V^5\times  {\mathsf T}_{y(s)}\bbt^2\times  {\mathsf T}_{A(s)}\Sl_2(\R)
$$ and consider the linear operator 
\begin{gather*}
 \mathbb{A}(u,y,\eta):E\to \widehat{\mathcal{T}}_1, \quad \zeta\mapsto (v(1),z(1), B(1)). 	
\end{gather*}
  This operator is the linearization of the mapping $\eta \mapsto (u(1),y(1), A(1))$. Recall that $\ell$ stands for the law of the random variables~$\{\eta_k\}$ in~\eqref{E:4.1} satisfying Hypothesis~{\rm \hyperlink{N}{(N)}}.
The following theorem is the main result of this~section. It~shows that Hypothesis~{\rm \hyperlink{H7}{(H$_7$)}} is verified.

\begin{theorem}\label{T:6.1}
Under Hypothesis~{\rm \hyperlink{N}{(N)}}, for any  $(u(0),y(0))\in\XX$ and $\ell$-a.e. $\eta\in E$, the image of~$ \mathbb{A}(u,y,\eta)$ is dense in $\widehat{\mathcal{T}}_1$. 
\end{theorem}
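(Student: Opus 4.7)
The plan is to argue by contradiction via Hahn--Banach duality, extending the linearisation analysis from Section~7 of~\cite{KNS20} to the full triple $(v,z,B)$. Assume that for $\eta$ in a set of positive $\ell$-measure the image of $\mathbb{A}(u,y,\eta)$ is not dense in $\widehat{\mathcal{T}}_1$. Then there exist continuous linear functionals $v^\star\in (V^5)^*$, $z^\star\in \mathsf{T}^*_{y(1)}\bbt^2$, and $B^\star\in\mathsf{T}^*_{A(1)}\Sl_2(\R)$, not all zero, such that
\begin{equation*}
\langle v^\star,v(1)\rangle+\langle z^\star,z(1)\rangle+\langle B^\star,B(1)\rangle=0\qquad\forall\,\zeta\in E.
\end{equation*}
The strategy is to rewrite this identity as an orthogonality relation in $L^2(J,\eE)$ between $\zeta$ and an explicit $\eE$-valued function, and then invoke the observability hypothesis \hyperlink{N}{(N)} to force $v^\star,z^\star,B^\star$ to vanish.

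First I would apply Duhamel's formula to the ODE components~\eqref{E:6.2} and~\eqref{E:6.2*}. Denoting by $\Phi_z(t,s)$ and $\Phi_B(t,s)$ the resolvents of the homogeneous parts, one gets
\begin{equation*}
z(1)=\int_0^1\Phi_z(1,s)v(s,y(s))\,\dd s,\qquad B(1)=\int_0^1\Phi_B(1,s)\bigl[(D^2_{xx}u)(s,y(s))A(s)z(s)+(D_xv)(s,y(s))A(s)\bigr]\,\dd s,
\end{equation*}
so that, after inserting the first identity into the second, the whole pairing becomes a linear functional of $v$ alone: a combination of $\langle v^\star,v(1)\rangle$ with time integrals of $v(s,y(s))$ and $(D_xv)(s,y(s))$ weighted by deterministic, smooth coefficients built from $(u,y,A)$. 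Introducing the backward adjoint of \eqref{E:6.1} with terminal datum $v^\star$ and with distributional sources along the curve $\{y(s):s\in J\}$ (Dirac masses contributed by the $z$-term, together with first-order derivatives of Dirac masses contributed by the $B$-term), parabolic duality converts the pairing into
\begin{equation*}
\int_0^1 (p(t),\zeta(t))_{L^2(\bbt^2)}\,\dd t = 0 \qquad \forall\,\zeta\in L^2(J,\eE),
\end{equation*}
where $p$ is the adjoint trajectory. Since $\zeta$ ranges freely, this forces $(p(t),e_j)_{L^2}=0$ for every $j\in\mathcal{I}$ and a.e.\ $t\in J$.

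Next I would exploit the analyticity-in-time of $p(t)$ away from $t=1$ (from the backward heat smoothing) and the explicit form of $Q(u)^\star$ to massage the vanishing relation into a pointwise identity
\begin{equation*}
\sum_{j\in\mathcal{I}}a_j(t)\,(\eta(t),e_j)_{\eE}+b(t)=0\quad \text{in } L^2(J,\R),
\end{equation*}
with coefficients $a_j$ Lipschitz and $b$ continuous, built from $v^\star,z^\star,B^\star$ and $(u,y,A)$. By Definition~\ref{D:4.1} and Hypothesis~\hyperlink{N}{(N)}, the observability of $\{\eta_k\}$ then gives $a_j\equiv b\equiv 0$; reading off the terminal-time coefficients one recovers $v^\star=0$, and from the remaining structure (using the invertibility of $\Phi_z(1,\cdot)$, $\Phi_B(1,\cdot)$ and the injectivity of the evaluation map near $t=1$) one deduces $z^\star=0$ and $B^\star=0$, the desired contradiction.

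The main obstacle is the rigorous handling of the adjoint system with singular sources (both Dirac masses and their first derivatives) concentrated at the moving point $y(s)$: this is where the regularity afforded by the choice of phase space $V^5$ and the reinforced $V^8$-closure in the definition of $X$ come in, ensuring that $u,A,z$ are smooth enough for the evaluations $v(s,y(s))$ and $(D_xv)(s,y(s))$ to produce genuinely Lipschitz/continuous coefficients $a_j,b$. A secondary technical point will be verifying that projecting the adjoint onto the \emph{finitely many} modes $\{e_j\}_{j\in\mathcal{I}}$ does not lose information---this mirrors the analogous step in \cite{KNS20}, but the presence of the derivative source term makes the bookkeeping slightly more delicate.
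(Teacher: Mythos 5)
Your overall framework (duality, backward adjoint system along the Lagrangian trajectory, reduction to an identity $\sum_{j\in\mathcal{I}}a_j(t)\,\eta^j(t)+b(t)=0$, and an appeal to the observability in Hypothesis~\hyperlink{N}{(N)}) is the same skeleton as the paper's proof, but there is a genuine gap at the decisive step: you claim that one application of observability, ``reading off the terminal-time coefficients,'' recovers $v^\star=0$ and then $z^\star=B^\star=0$. This cannot work in the degenerate setting. After one differentiation of the orthogonality relation, the coefficients are of the form $a_j(t)=\<Q(e_j)\xi,w(t)\>_{V^5}$ with $\xi$ ranging only over the \emph{finite-dimensional} space $\eE$ and $j\in\mathcal{I}$; observability then yields only that the velocity component $w(t)$ of the adjoint state is orthogonal to the finitely many directions $Q(e_j)\xi$, i.e.\ to the space $\eE_1$ of the saturation hierarchy, not that $w\equiv0$ (equivalently $v^\star=0$), since $w$ lives in the infinite-dimensional space $V^5$. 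The paper's proof hinges precisely on what your argument omits: the recursively defined spaces $\eE_k$ (built from $Q(\zeta_l)\xi_l$ with $\zeta_l\in\eE_k$, $\xi_l\in\eE$), the saturation property that $\bigcup_k\eE_k$ is dense in $V^m$ (Lemma~\ref{L:6.5}), and an induction in $k$ in which observability is applied at \emph{every} level to upgrade ``$w(t)\perp\eE_k$ for all $t$'' to ``$w(t)\perp\eE_{k+1}$ for all $t$.'' Only after this induction, and the density of $\bigcup_k\eE_k$, does one get $w\equiv0$; the conclusion $p\equiv0$, $C\equiv0$ (hence $z^\star=B^\star=0$) then follows from the equation for $w$, which forces $F(p,C,y,A)\equiv0$, and from the arbitrariness of $\xi\in V^\sigma$ in \eqref{E:6.7} --- not from any invertibility of the ODE resolvents alone.

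Two further points. First, your appeal to ``analyticity-in-time of $p(t)$ from backward heat smoothing'' is not available: the adjoint system has coefficients depending on $(u(t),y(t),A(t))$, which are driven by an $\eta$ that is merely $L^2$ in time, so no time-analyticity holds. The paper instead differentiates the orthogonality identity after splitting $u(t)=\wt u(t)+\sum_{j\in\mathcal{I}}e_j\int_0^t\eta^j(s)\,\dd s$, which is exactly what isolates $\eta^j(t)$ with a Lipschitz coefficient $a_j$ and a continuous remainder $b$, so that Definition~\ref{D:4.1} applies; you need this device (or an equivalent one) to make the observability step legitimate. Second, your formulation of the adjoint with Dirac masses and derivatives of Dirac masses at the moving point $y(s)$ is workable in principle, but its well-posedness must be established; the paper does this by encoding those singular sources as the functional $F(p,C,y,A)\in V^{-\sigma}$, mollified by $\Lambda^{-10}$, and proving Lemma~\ref{L:6.4}. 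That is a technical rather than conceptual difference, whereas the missing saturation induction is the heart of the matter: without it the argument only exploits the finitely many directly forced modes and cannot rule out a nonzero annihilating functional.
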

This theorem is proved by extending some ideas from Section~4 in \cite{KNS20} based on the observability assumption and a saturation property of the space~$\eE$.
As the current situation is more degenerate (the control $\zeta$ acts directly only on a few Fourier modes in the velocity equation), the verification of the controllability of the linearized triple system~\eqref{E:6.1}-\eqref{E:6.2*} is more~delicate.

\subsection{Forward and backward linear flows}\label{S:6.1}

Let us consider the following homogeneous version of the system \eqref{E:6.1}-\eqref{E:6.2*}:
\begin{gather}
    \partial_t v + L v + Q(u) v =0,  \label{E:6.3}\\
   \dot{z}(t) = (D_xu)(t,y) z + v(t,y),    \label{E:6.4} \\ 
    \dot{B}(t) =  (D^2_{xx}u)(t,y)A z + (D_xu)(t,y) B 
   + (D_x v)(t,y) A,  \label{E:6.4*} 
\end{gather} 
where $(v(s), z(s), B(s)) = (v_0, z_0, B_0)\in  V^5 \times \bbr^2 \times \M_2(\R)$. Let $R(t,s)$ be the two-parameter processes solving this system, noting that  
$$
R(t,s): \widehat{\mathcal{T}}_s \to \widehat{\mathcal{T}}_t, \quad
0\leq s\leq t\leq 1.
$$    
Additionally, let us set~$\Lambda:=\left(-\Delta\right)^\frac12$ and introduce the  following backward system:
\begin{gather}
   \partial_t w - Lw - \Lambda^{-10} Q^*(u) \Lambda^{10}w  + \Lambda^{-10} F(p,C, y,A) =0,  \label{E:6.5}\\
   \dot{p}(t) + (D_xu)^\TTT(t,y)p 
   +   (D^2_{xx}u (t,y) A)^\TTT C=0,   \label{E:6.6} \\ 
   \dot{C}(t) + (D_xu)^\TTT(t,y) C=0, \label{E:6.6*}
\end{gather}
where 
$$
(w(1), p(1), C(1)) = (w_0, p_0, C_0) \in  V^5 \times \bbr^2 \times \M_2(\R),
$$ 
$Q^*(u)$ is the (formal) $H$-adjoint of $Q(u)$ 
given by 
$$
 Q^*(u) w = -\Pi\left(\lag u, \na \rag  w + (\na \otimes w) u\right) 
$$
with $\na\otimes w$ being the $2\times 2$ matrix 
with entries $\partial_i w_j$, 
$(D_xu)^\TTT(t,y(t))$ is the transpose matrix of 
$(D_xu)(t,y(t))$,   $(D^2_{xx}u (t,y) A)^\TTT C $ is the vector with components   
$$ 
 ((D^2_{xx}u (t,y) A)^\TTT C)_i=\lag (\partial_iD_xu)(t,y)A, C \rag_{\M_2},\quad i=1,2,
$$ 
 and $F(p,C, y, A)$ is defined by duality as follows. 
For any fixed $\sigma\in  (2,5/2)$ and any given 
$$
(p,C, y, A)\in  \mathbb{R}^2 \times \M_2(\R) \times \bbt^2 \times \Sl_2(\R),
$$ we~define~$F(p,C,y,A)$ to be the unique element of $V^{-\sigma}(\bbt^2)$ such that  
\begin{equation} \label{E:6.7}
   {}_{V^{\sigma}}\<\xi, F(p,C, y, A) \>_{V^{-\sigma}}
   =  \<\xi(y), p\>_{\bbr^2} 
      + \<(D_x\xi)(y)A, C\>_{\M_2}
\end{equation}for any $\xi\in V^\sigma$.
The Cauchy--Schwarz inequality and the Sobolev embedding $V^\sigma(\bbt^2) \hookrightarrow C^1(\bbt^2)$ imply that\footnote{In what follows, we write $\|\cdot\|_{\R^2}$ and $\|\cdot\|_{\M_2}$ for the norms in $\R^2$ and $\M_2(\R)$.}
\begin{align*}
   |\<\xi(y), p\>_{\bbr^2}| + |\<D_x\xi(y)A, C\>_{\M_2}|
   &\leq \|\xi(y)\|_{\bbr^2} \|p\|_{\bbr^2}  
         + \|D_x \xi(y) \|_{\M_2} \|A\|_{\M_2} 
         \|C\|_{\M_2} \\ 
   &\leq  \|\xi\|_{V^\sigma} 
          \left(\|p\|_{\bbr^2} + \|A\|_{\M_2}\|C\|_{\M_2}\right),
\end{align*}
which shows that
$F(p,C,y,A) \in V^{-\sigma}$ is well defined   
and 
$$
  \|F(p,C,y,A)\|_{V^{-\sigma}} 
  \leq \|p\|_{\mathbb{R}^2} 
       +  \|A\|_{\M_2}\|C\|_{\M_2}. 
$$ 
Consequently, for any $p\in C(J, \bbr^2)$, $C\in C(J, \M_2(\R))$, $y\in C(J, \bbt^2)$, and $A\in C(J, \Sl_2(\R))$, 
we have 
\begin{equation} \label{E:6.8}
   \|F(p,C, y, A)\|_{L^\infty(J, V^{-\sigma})}
   \leq \|p\|_{C(J, \bbr^2)} 
     +  \|A\|_{C(J, \M_2(\R))}\|C\|_{C(J, \M_2(\R))}.
\end{equation}
The following lemma establishes the well-posedness of the system \eqref{E:6.5}-\eqref{E:6.6*}; its~proof is postponed to Section~\ref{S:7.3}.
\begin{lemma}\label{L:6.4}
For any $(w_0, p_0, C_0) \in V^5 \times \bbr^2 \times \M_2(\R)$,
there is a unique~solution $(w,p, C)\in  C(J, V^5 \times \bbr^2 \times \M_2(\R))$
to the system \eqref{E:6.5}-\eqref{E:6.6*}.  
Moreover,  
\begin{equation}  \label{E:6.12}
   \|w\|_{C(J, V^5)}
   +  { \|w\|_{L^2(J, V^6)}}
   + \|p\|_{C(J, \bbr^2)} 
   + \|C\|_{C(J, \M_2(\R))}
   \lesssim \|w_0\|_{V^5} + \|p_0\|_{\bbr^2} 
   + \|C_0\|_{\M_2} .
\end{equation}
\end{lemma}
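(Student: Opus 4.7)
The strategy is to exploit the triangular structure of~\eqref{E:6.5}--\eqref{E:6.6*}: the backward ODEs~\eqref{E:6.6*} and~\eqref{E:6.6} for $C$ and $p$ are decoupled from $w$, so the plan is to solve them first, and then view~\eqref{E:6.5} as a linear backward parabolic equation driven by the now-known source $\Lambda^{-10}F(p,C,y,A)$.

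\emph{ODE step.} Since $(u,y,A)$ comes from the nonlinear triple~\eqref{E:5.1}--\eqref{E:5.3} with $u\in C(J,V^5)$, the Sobolev embedding $V^5\hookrightarrow C^3(\bbt^2)$ makes the coefficient functions $(D_xu)^\TTT(t,y(t))$ and $(D^2_{xx}u)(t,y(t))A(t)$ continuous on $J$, with norms controlled by those of the nonlinear flow. Equation~\eqref{E:6.6*}, run backward from $t=1$ with terminal datum $C_0$, is thus a linear ODE in $\M_2(\R)$ with continuous matrix coefficients; the Cauchy--Lipschitz theorem and Gronwall's inequality yield a unique $C\in C(J,\M_2(\R))$ with $\|C\|_{C(J,\M_2(\R))}\lesssim \|C_0\|_{\M_2}$. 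Plugging $C$ into the source of~\eqref{E:6.6} and repeating the argument gives a unique $p\in C(J,\R^2)$ with $\|p\|_{C(J,\R^2)}\lesssim \|p_0\|_{\R^2}+\|C_0\|_{\M_2}$.

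\emph{Parabolic step.} With $(p,C)$ fixed, the bound~\eqref{E:6.8} shows that $F(p,C,y,A)\in L^\infty(J,V^{-\sigma})$. Reversing time via $\tilde w(t):=w(1-t)$, $\tilde u(t):=u(1-t)$, $\tilde F(t):=F(p,C,y,A)(1-t)$, I would rewrite~\eqref{E:6.5} as the forward Cauchy problem
\begin{equation*}
\partial_t\tilde w+L\tilde w=-\Lambda^{-10}Q^*(\tilde u)\Lambda^{10}\tilde w+\Lambda^{-10}\tilde F,\qquad \tilde w(0)=w_0\in V^5.
\end{equation*}
Since $\Lambda^{\pm 10}$ are Fourier multipliers commuting with $L$ and $Q^*(u)$ is a first-order operator with $C^3$-coefficients (because $u\in L^\infty(J,V^5)$), the conjugated perturbation $\Lambda^{-10}Q^*(\tilde u)\Lambda^{10}$ is a first-order operator in $\tilde w$ with bounded coefficients. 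Existence and uniqueness of a solution $\tilde w\in C(J,V^5)\cap L^2(J,V^6)$ then follow from standard linear parabolic theory, for instance via Faedo--Galerkin approximation on the eigenbasis of $L$.

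\emph{Energy estimate.} To obtain~\eqref{E:6.12} I would test the equation for $\tilde w$ with $\Lambda^{10}\tilde w$ in $H$. The principal part produces $\tfrac12\frac{d}{dt}\|\tilde w\|_{V^5}^2+\nu\|\tilde w\|_{V^6}^2$ on the left. The perturbation contributes $-\langle\Lambda^{10}\tilde w,Q(\tilde u)\tilde w\rangle_H$; using $V^{-4}$--$V^4$ duality together with the bilinear estimate $\|Q(\tilde u)\tilde w\|_{V^4}\lesssim \|\tilde u\|_{V^5}\|\tilde w\|_{V^5}$, this is bounded by $\|\tilde w\|_{V^6}\|\tilde u\|_{V^5}\|\tilde w\|_{V^5}$, so Young's inequality absorbs $\tfrac{\nu}{2}\|\tilde w\|_{V^6}^2$ into the good term and leaves $C\|\tilde u\|_{V^5}^2\|\tilde w\|_{V^5}^2$. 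The source pairing $\langle\tilde F,\tilde w\rangle$ is well defined since $\sigma<5$ and, by~\eqref{E:6.8}, is bounded by $(\|p\|_{C(J,\R^2)}+\|A\|_{C(J,\M_2(\R))}\|C\|_{C(J,\M_2(\R))})\|\tilde w\|_{V^5}$. Gronwall, combined with $\int_0^1\|u\|_{V^5}^2\,\dd t<\infty$ and the bounds from the ODE step, yields~\eqref{E:6.12}; uniqueness follows by applying the same estimate to the difference of two solutions with zero data. The main technical point is handling the conjugated first-order term $\Lambda^{-10}Q^*(u)\Lambda^{10}$ in the $V^5$-energy estimate, which however reduces to standard bilinear Sobolev product inequalities on $\bbt^2$ given the high regularity of $u$.
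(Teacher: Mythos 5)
Your proposal is correct and follows essentially the same route as the paper: solve the decoupled ODEs \eqref{E:6.6*}, \eqref{E:6.6} by classical theory, reverse time, and close a $V^5$-level energy estimate with Gronwall using \eqref{E:6.8} for the source. The only (inessential) difference is bookkeeping: the paper substitutes $\Lambda^{5}w(1-t)$ as the new unknown and runs an $H$-estimate for the conjugated operator $\Lambda^{-5}Q^*(u)\Lambda^{5}$, whereas you keep $\tilde w$ and test with $\Lambda^{10}\tilde w$, letting the adjointness of $Q^*$ reduce matters to the bilinear bound $\|Q(\tilde u)\tilde w\|_{V^4}\lesssim\|\tilde u\|_{V^5}\|\tilde w\|_{V^5}$ --- the same computation in equivalent form.
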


Let $R(t,s)^*$ be the dual operator of $R(t,s)$ in $ V^5 \times \bbr^2 \times \M_2(\R)$. Next lemma shows that the backward system  \eqref{E:6.5}-\eqref{E:6.6*}
 is actually the dual of the forward system \eqref{E:6.3}-\eqref{E:6.4*}.

\begin{lemma}\label{L:6.3}
For any $f_0:=(w_0,p_0, C_0) \in  V^5\times \bbr^2 \times \M_2(\R)$,
we have that 
$$
(w(t),p(t), C(t))=R(1,t)^*f_0, 
\quad 0\leq t\leq 1
$$ 
is the solution of the backward system \eqref{E:6.5}-\eqref{E:6.6*}.
\end{lemma}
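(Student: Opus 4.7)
The plan is to verify the duality relation by showing that an appropriately chosen pairing between a forward solution and a backward solution is conserved in time. The choice of $\Lambda^{\pm 10}$ factors in \eqref{E:6.5} is precisely designed to make the natural $V^5$ inner product work.

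Concretely, fix $(v_0, z_0, B_0) \in V^5 \times \R^2 \times \M_2(\R)$, let $(v,z,B)$ denote the solution of the forward system \eqref{E:6.3}-\eqref{E:6.4*} with $(v(s),z(s),B(s))=(v_0,z_0,B_0)$, and let $(w,p,C)$ solve the backward system \eqref{E:6.5}-\eqref{E:6.6*} with terminal data $(w_0,p_0,C_0)$. I would introduce
\[
\Phi(t):=\langle v(t),w(t)\rangle_{V^5}+\langle z(t),p(t)\rangle_{\R^2}+\langle B(t),C(t)\rangle_{\M_2},
\]
using the identity $\langle v,w\rangle_{V^5}=\langle v,\Lambda^{10}w\rangle_{L^2}$, and compute $\tfrac{d}{dt}\Phi(t)$ using the three evolution equations.

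For the velocity pairing I would substitute \eqref{E:6.3} and \eqref{E:6.5}, commute $L=\nu\Lambda^2$ with $\Lambda^{10}$, and use that $Q^*(u)$ is the formal $H$-adjoint of $Q(u)$. The $L$-terms cancel, the $Q$-terms cancel via the adjoint relation, and what survives is $-\langle v,F(p,C,y,A)\rangle$, which by the defining identity \eqref{E:6.7} (applied with $\xi=v\in V^5\subset V^\sigma$) equals $-\langle v(y),p\rangle_{\R^2}-\langle (D_x v)(y)A,C\rangle_{\M_2}$. For the position pairing, substituting \eqref{E:6.4} and \eqref{E:6.6} and using that $\langle (D_xu)z,p\rangle=\langle z,(D_xu)^\TTT p\rangle$ produces $\langle v(y),p\rangle-\langle z,(D^2_{xx}u\,A)^\TTT C\rangle_{\R^2}$. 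For the matrix pairing, using \eqref{E:6.2*} and \eqref{E:6.6*} yields $\langle (D^2_{xx}u)Az,C\rangle_{\M_2}+\langle (D_xv)(y)A,C\rangle_{\M_2}$, the $(D_xu)B$ terms cancelling against $(D_xu)^\TTT C$. Summing, the $\langle v(y),p\rangle$ and $\langle (D_xv)(y)A,C\rangle$ contributions cancel between the first and third groups, and observing that
\[
\langle z,(D^2_{xx}u\,A)^\TTT C\rangle_{\R^2}=\sum_{i=1}^2 z_i\langle(\partial_i D_xu)(y)A,C\rangle_{\M_2}=\langle (D^2_{xx}u)Az,C\rangle_{\M_2}
\]
by the very definitions given after \eqref{E:6.2*} and \eqref{E:6.6}, the remaining two terms cancel as well, giving $\Phi'(t)\equiv 0$. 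Equating $\Phi(1)$ and $\Phi(s)$ yields exactly the defining relation of $R(1,s)^*$, so $(w(s),p(s),C(s))=R(1,s)^*(w_0,p_0,C_0)$.

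The only real subtlety I anticipate is justifying the integration-by-parts identities and the manipulations $L\Lambda^{10}w=\Lambda^{10}Lw$ and $\langle v, Q^*(u)\Lambda^{10}w\rangle=\langle Q(u)v,\Lambda^{10}w\rangle$ at the regularity level available. This is where Lemma~\ref{L:6.4} does the work: it gives $w\in C(J,V^5)\cap L^2(J,V^6)$, which together with $v\in C(J,V^5)\cap L^2(J,V^6)$ (from the standard analysis of \eqref{E:6.3}) is enough for all pairings to make classical sense and for the distributional source $F(p,C,y,A)\in L^\infty(J,V^{-\sigma})$ with $\sigma<5/2$ to pair with $v\in V^\sigma$. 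A brief density argument—approximating $(v_0,z_0,B_0)$ and $(w_0,p_0,C_0)$ by smoother data, carrying out the computation for the regularized problems where all operators act classically, and then passing to the limit using \eqref{E:6.12} and the analogous bound for the forward system—would make the formal calculation rigorous.
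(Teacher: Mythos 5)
Your proposal is correct and follows essentially the same route as the paper: the paper also proves the duality by showing that the pairing of the forward flow with the backward flow in $V^5\times\R^2\times\M_2(\R)$ (written there via the weight $\mathbb{D}=\mathrm{diag}(\Lambda^5,\Id,\Id)$, which is exactly your identity $\langle v,w\rangle_{V^5}=\langle v,\Lambda^{10}w\rangle_{L^2}$) is constant in time, with the same cancellations and the same final use of \eqref{E:6.7}. The only cosmetic difference is that the paper justifies the differentiation directly via the smoothing of $R(\tau,s)$ and the bound \eqref{E:6.9} from Lemma~\ref{L:6.4}, rather than by a density/regularization argument, but both are adequate.
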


\begin{proof}
Let 
$$
J(t,\tau): V^5 \times \bbr^2 \times \M_2(\R) \to V^5 \times \bbr^2 \times \M_2(\R), \quad
 0\leq \tau\leq t\leq 1
 $$
be the resolving operator for the backward system \eqref{E:6.5}-\eqref{E:6.6*} with final condition at time~$t$: $w(t) = w_0$, $p(t) = p_0$, $C(t) = C_0$.
By virtue of Lemma~\ref{L:6.4},
we have that $J(t,\tau)$ is a bounded operator in $V^5 \times \bbr^2 \times \M_2(\R)$
and   
\begin{equation} \label{E:6.9}
  \partial_\tau J(t,\tau) \in C([0,t], V^3\times \bbr^2 \times \M_2(\R)).
\end{equation}
Let us show that, for any  $h,g\in V^5\times \bbr^2 \times \M_2(\R)$ and $0\leq s\leq \tau\leq t\leq 1$, the~quantity
$$
   \lag R(\tau,s) h, J(t,\tau) g\rag_{V^5\times \bbr^2 \times \M_2} 
$$
is independent of $\tau$.
Once this is proved,
evaluating at $\tau =s$
and $\tau=t$,
we~get
$$
  \<h, J(t,s)g\>_{V^5\times \bbr^2 \times \M_2}
  = \<R(t,s)h,g\>_{V^5\times \bbr^2 \times \M_2},
$$
which yields
$J(t,s) = R(t,s)^*$.
To prove the independence of $\tau$,
we~set  
$$
\mathbb{D}:= \text{diag}(\Lambda^5, \Id_{\R^2}, \Id_{\M_2})
$$
and take the derivative in $\tau$:
\begin{align*}
       \partial_\tau\<R(\tau, s) h , J(t,\tau)g\>_{V^5\times \bbr^2 \times \M_2}
  & =       \< \mathbb{D} \partial_\tau R(\tau, s) h ,  \mathbb{D} J(t,\tau)g\>
      \\
  &\quad+  \<  \mathbb{D} \partial_\tau  J(t,\tau)g, 
      \mathbb{D} R(\tau, s) h\>
\end{align*} 
for $0\leq s< \tau < t\leq 1$, 
where $\<\cdot, \cdot\>$ 
stands for the duality between the spaces
${V^{-1}\times \bbr^2 \times \M_2(\R)}$ 
and ${V^1\times \bbr^2 \times \M_2(\R)}$. 
Note that,
by the smoothing property of $R(\tau, s)$ and~\eqref{E:6.9},
the right-hand side above is well-defined.

Let us set
$\mathbb{F}_1:=(0,
       v(\tau,y),  
       (D_xv)(\tau,y) A)^\TTT$, 
$\mathbb{F}_2:= (\Lambda^{-10} F(p,C,y,A), 
       0, 
       0)^\TTT$, 
$$\mathbb{M}_1 := \begin{pmatrix}
       -L - Q(u) &  0 & 0\\
       0& (D_xu)(\tau,y) & 0 \\
       0& (D^2_{xx}u)(\tau,y) A & (D_xu) (\tau,y)\\ 
     \end{pmatrix},$$
and 
$$\mathbb{M}_2:= \begin{pmatrix}
       L+   \Lambda^{-10}Q^*(u) \Lambda^{10}& 0 & 0  \\
      0&   -(D_xu)^\TTT(\tau,y) &  
      -((D^2_{xx}u)(\tau,y)A)^\TTT  \\ 
      0 & 0 & -(D_xu)^\TTT(\tau,y) \\ 
     \end{pmatrix}
$$
Using the expressions of the derivatives $\partial_\tau R(\tau, s)$ and $\partial_\tau  J(t,\tau)$ from the 
systems \eqref{E:6.3}-\eqref{E:6.4*}
and  \eqref{E:6.5}-\eqref{E:6.6*},
we get
\begin{align*}
     \partial_\tau     \<R(\tau, s) h , J(t,\tau)g\>_{V^5\times \mathbb{R}^2 \times \M_2} &   
   =    \left\lag 
     \mathbb{D}^2 \mathbb{M}_1
    R(\tau,s) h
    + \mathbb{D}^2 \mathbb{F}_1, 
    J(t,\tau) g \right\rag_{H\times\bbr^2\times \M_2}   \\
    &\quad
    +    
    \left\lag
  \!  R(\tau, s)h, 
     \mathbb{D}^2 \mathbb{M}_2 J(t,\tau) g
   \!-\!
     \mathbb{D}^2 \mathbb{F}_2
   \right\rag_{H\times\bbr^2\times \M_2} \\
&=  \<v(\tau,y), p\>_{\bbr^2} 
   + \<(D_xv)(\tau,y)A, C\>_{\M_2} \\
   &\quad  - {}_{V^{\sigma}}\<v, F(p,C, y, A)\>_{V^{-\sigma}}
   = 0,
\end{align*}
 where in the last step we used \eqref{E:6.7}.
This completes the proof.
\end{proof}

\subsection{Approximate controllability via saturation}\label{S:6.2}

 Let us define recursively a non-decreasing sequence of finite-dimensional spaces $\eE_k$  in the following way:
 \begin{itemize}
  \item $\eE_0=\eE$,
  \item if $\eE_k$ is defined, then $\eE_{k+1}$ is the space spanned by the vectors  
      $$
      \eta+\sum_{l=1}^nQ(\zeta_l)\xi_l,\quad n\ge1, 
      $$
      where $\eta, \zeta_l\in \eE_k$ and~$\xi_l\in \eE$, $l=1,\ldots,n$ and $Q$ is defined by \eqref{E:Q}.
\end{itemize}
The following lemma is a consequence of the results established in  Section~6 in \cite{KNS20}.

\begin{lemma}\label{L:6.5}
 The space $\eE$  is saturating, that is, the union $\bigcup_{k\geq 0} \eE_k$ is dense in $V^m$ for any $m\ge1$.
\end{lemma}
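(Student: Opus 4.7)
The plan is to reduce the density claim to a purely combinatorial statement about Fourier indices and combine it with an algebraic computation describing the action of $Q$ on pairs of trigonometric modes. First, I would introduce $\mathcal{I}_k \subset \mathbb{Z}^2_*$ as the set of indices $j$ for which $E_j \in \eE_k$, so that $\mathcal{I}_0 = \mathcal{I}$. Since $\{E_j\}_{j \in \mathbb{Z}^2_*}$ is a complete orthogonal basis of $V^m$ for every $m\ge 1$, density of $\bigcup_{k\ge 0} \eE_k$ in $V^m$ reduces to proving the set-theoretic equality $\bigcup_{k\ge 0} \mathcal{I}_k = \mathbb{Z}^2_*$.

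The key algebraic input is the explicit form of $Q(E_j)E_k$ for $j,k \in \mathbb{Z}^2_*$. Using $Q(u)v = \Pi(\langle u, \nabla\rangle v + \langle v, \nabla\rangle u)$ and the action of the Leray projection $\Pi$ on products of trigonometric functions, one finds that $Q(E_j)E_k$ is a linear combination of $E_{j+k}$ and $E_{j-k}$ whose coefficients are proportional to $\langle j^{\bot}, k\rangle$, with signs determined by the cases in \eqref{EEE:f}. Provided $j$ and $k$ are not collinear and $j \pm k \neq 0$, both coefficients are non-zero; by summing terms $Q(\zeta_l)\xi_l$ for suitable $\zeta_l, \xi_l$ chosen among $\{E_j, E_k\}$ and adjusting by an element of $\eE_k$, one isolates each of $E_{j+k}, E_{j-k}$ inside $\eE_{k+1}$. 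Hence $\mathcal{I}_{k+1}$ contains $\{j+k,\, j-k\}$ for every non-collinear pair $j, k \in \mathcal{I}_k$ with $j \pm k \neq 0$.

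Since $\mathcal{I}$ contains pairwise non-collinear vectors such as $(1,0), (0,1), (1,1), (1,-1)$, the set $\mathcal{I}_1$ already contains $(2,0), (0,2), (2,1), (1,2)$ and their reflections; a short induction on $\|j\|_{\infty}$ then yields $\bigcup_{k\ge 0} \mathcal{I}_k = \mathbb{Z}^2_*$. The main obstacle at each step is guaranteeing that the Leray projection does not annihilate the desired output and that a non-collinear witness pair is available inside $\mathcal{I}_k$; the latter is automatic once $\mathcal{I}_k$ contains vectors in at least three distinct directions, which already holds at level $k=0$. Both the algebraic identity for $Q(E_j)E_k$ and this combinatorial induction are carried out in Section~6 of~\cite{KNS20} in the same functional setting, and Lemma~\ref{L:6.5} follows by direct transcription of those results.
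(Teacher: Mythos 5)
Your overall strategy (reduce to a combinatorial statement on Fourier indices via the explicit action of $Q$ on pairs of modes, then induct over generations) is indeed the content of Section~6 of \cite{KNS20}, and the paper itself proves Lemma~\ref{L:6.5} by nothing more than that citation. However, your transcription of the key algebraic fact is wrong, and the step built on it would fail. For the symmetrized nonlinearity $Q(E_j)E_k=\Pi(\langle E_j,\nabla\rangle E_k+\langle E_k,\nabla\rangle E_j)$, a direct computation (expand the product of trigonometric functions and apply the Leray projection) gives coefficients of the modes indexed by $j+k$ and $j-k$ proportional to $\langle j^{\bot},k\rangle\,(|j|^2-|k|^2)/|j\pm k|^2$, not merely to $\langle j^{\bot},k\rangle$. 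Hence the output vanishes not only for collinear $j,k$ but also whenever $|j|=|k|$; this resonance is precisely why the generating set must contain non-collinear modes of \emph{different} lengths. Your claim that non-collinearity with $j\pm k\neq0$ suffices is therefore false, and so is the concrete consequence you draw: $(2,0)$ and $(0,2)$ do \emph{not} belong to $\mathcal{I}_1$, since every decomposition of $(2,0)$ into two elements of $\mathcal{I}$ is either collinear ($(1,0)+(1,0)$) or has equal lengths ($(1,1)+(1,-1)$), so all the corresponding coefficients vanish; these modes only appear at the second generation (e.g.\ from the pair $(2,1)$, $(0,-1)$). Likewise, your criterion that ``three distinct directions'' in $\mathcal{I}_k$ guarantees a usable witness pair is not the right condition: what the induction needs at each step is a non-collinear pair of distinct Euclidean lengths, one member of which lies in the base set $\eE$ (recall that in the definition of $\eE_{k+1}$ the factors $\xi_l$ are required to belong to $\eE$, not to $\eE_k$).

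So the conclusion is correct and your final appeal to \cite{KNS20} coincides with what the paper does, but the argument you sketch is not a faithful transcription of that reference: with your stated non-vanishing condition the induction breaks at the very first generation. A correct self-contained proof would state the coefficient formula with the factor $|j|^2-|k|^2$, observe that $\mathcal{I}$ contains non-collinear vectors of lengths $1$ and $\sqrt{2}$, and run the induction by repeatedly adding a base mode of different length to already-reached indices, as is done in Section~6 of \cite{KNS20}.
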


Now, we are ready to prove Theorem \ref{T:6.1}.

\begin{proof}[Proof of Theorem \ref{T:6.1}]
The proof develops the ideas of Section 4.1 in \cite{KNS20}. 
Let  
$
\overline{{\eE}} := \eE \times \{0\}\times \{0\},
$ and let $\mathbb{A}= \mathbb{A}(u,y,\eta): L^2(J, \overline{\eE})\to \widehat{\mathcal{T}}_1$ be the resolving operator of the system \eqref{E:6.1}-\eqref{E:6.2*} with extended control $\ol{\zeta}$:  
$$
\ol{\zeta} = (\zeta, 0, 0) \in L^2(J, \ol{\eE}) \mapsto (v(1), z(1), B(1))\in \widehat{\mathcal{T}}_1.
$$
In what follows, we assume that $\eta\in E$ is observable.
By the Duhamel formula,
$$
    \mathbb{A}\ol{\zeta} = \int_0^1 R(1,s) \ol{\zeta}(s) \dd s.
$$
We need to show that the image of $\mathbb{A}$ is dense in $\widehat{\mathcal{T}}_1$. This will be achieved by showing that
  the kernel of $\mathbb{A}^*$ (the dual of $\mathbb{A}$) is trivial.
The operator $\mathbb{A}^*$ is given explicitly by
$$
   \mathbb{A}^* f(t) = {\mathsf P}_{\ol{\eE}}  R(1,t)^* f, \ \   t\in J,\   f\in \widehat{\mathcal{T}}_1,
$$
where ${\mathsf P}_{\ol{\eE}}$ is the orthogonal projection onto $\ol{\eE}$. Let $f_0 = (w_0, p_0, C_0)$ be any element of $\textup{Ker}(\mathbb{A}^*)$, i.e.,
$$
   \mathbb{A}^*f_0 = 0\ \ \text{in}\ L^2(J, \overline{\eE}).
$$
This implies that, for any $\ol{\xi} = (\xi,0,0) \in \ol{\eE}$, we have
\begin{equation}  \label{E:6.13}
  \<\ol{\xi}, R(1,t)^*f_0\>_{V^5 \times \bbr^2\times \M_2} =0\ \ \text{for  a.e.}\ t\in J.
\end{equation}
As $R(1,t)^* f_0$ is continuous in $t$, the previous
 equality holds for any $t\in J$.
In~particular,
taking $t=1$, we get that $w_0 \perp \eE$. 

Based on this information,
let us show that the equality
\eqref{E:6.13} holds for any $\ol{\xi} = (\xi, 0, 0) \in \ol{\eE}_k:= \eE_k \times \{0\} \times \{0\}$, any $k\geq 0$, 
and $t\in J$.  For this purpose,
we shall use an inductive argument. The base case $k=0$ has already been considered.
Suppose that there exists $k\geq 0$ 
such that \eqref{E:6.13} holds for any~$ \ol{\xi} = (\xi, 0,0) \in \ol{\eE}_k$ and $t\in J$.  
Taking the derivative in time in~\eqref{E:6.13}
 and using Lemma~\ref{L:6.3},
we~get 
\begin{gather*}
   \bigg<
        \begin{pmatrix}
         \Lambda^5 \xi \\
          0   \\ 
          0 \\
        \end{pmatrix},
          \begin{pmatrix}
       \Lambda^5  L+ \Lambda^{-5} Q^*(u) \Lambda^{10} & 0 & 0 \\
       0 & -(D_xu)^\TTT(t,y) 
       & -(D^2_{xx}u \, A)^\TTT(t,y) \\
       0 &  0& -(D_xu)^\TTT(t,y)
        \end{pmatrix}
   f  \bigg>  \\ 
   - \bigg< \begin{pmatrix}
          \xi \\
          0   \\ 
          0 \\
        \end{pmatrix},
        \begin{pmatrix}
       F(p,C,y,A) \\
       0 \\ 
       0 \\ 
        \end{pmatrix}
   \bigg>  = 0\ \
\end{gather*} 
for all $t\in J$, where $\<\cdot, \cdot\>$ 
denotes the duality between 
${V^{\sigma}\times \bbr^2 \times \M_2(\R)}$ 
and ${V^{-\sigma}\times \bbr^2 \times \M_2(\R)}$.
where 
the inner product is taken in 
the space $H\times \mathbb{R}^2 \times \M_2(\R)$, 
and 
$f(t) = R(1,t)^*f_0 = (w(t), p(t), C(t))$.
This is equivalent to
$$
   \<L\xi  + Q(u) \xi, w(t)\>_{V^5}
    -{}_{V^\sigma}\<\xi, F(p, C, y, A)\>_{V^{-\sigma}} =0, \ \   t\in J.
$$
We write
$$
   \wt u(t):
   =  u(t) - \int_0^t \eta(s) \dd s
   =  u(t) - \sum\limits_{j\in \mathcal{I}} e_j\int_0^t \eta^j(s)\dd s,
$$
where $\wt u(t)$ is smoother in time than $u(t)$, and 
$\eta^j(s) := \<\eta(s), e_j\>_{V^5}$. 
Then, taking into account the relation \eqref{E:6.7},
we derive
\begin{gather}
      \sum\limits_{j\in \mathcal{I}}
      \<Q(e_j) \xi, w(t)\>_{V^5} \int_0^t \eta^j(s) \dd s
    + \<L\xi + Q(\wt u) \xi, w(t)\>_{V^5} \nonumber \\ 
      - \<\xi(y(t)), p(t)\>_{\bbr^2} 
    - \<(D_x\xi)(y(t)), C(t)\>_{\M_2} 
    =0  \nonumber 
\end{gather} 
for all $t\in J$.
Taking the derivative in this equality, we get
$$
   \sum\limits_{j\in \mathcal{I}} a_j(t) \eta^j(t) + b(t) =0,
$$
where
\begin{align*}
    a_j(t):=& \<Q(e_j) \xi, w(t)\>_{V^5}, \\
    b(t) : =& 
      \sum\limits_{j\in \mathcal{I}}
     \frac{\dd}{\dd t} \<Q(e_j) \xi, w(t)\>_{V^5} 
     \int_0^t \eta^j(s) \dd s 
   +  \frac{\dd}{\dd t} 
     \<L\xi + Q(\wt u(t)) \xi, w(t)\>_{V^5}  \\ 
   & -   \frac{\dd}{\dd t} \<\xi(y(t)), p(t)\>_{\bbr^2} 
      -   \frac{\dd}{\dd t} \<(D_x\xi)(y(t))A(t), C(t)\>_{\M_2}.
\end{align*}
Note that, $a_j(t)$ is differentiable and
$b(t)$ is continuous on $J$, 
due to the fact that ~$\xi$ and $e_j$ are smooth,   
$w\in C(J, V^5)$, 
$\partial_t w\in C(J, V^3)$,  
$\tilde u\in C(J, V^8)$, 
and  $y$, $C$, $p$, $A$ are continuously differentiable.  
By the observability of $\eta$,
it follows that 
$$
   a_j(t) = \<Q(e_j) \xi, w(t)\>_{V^5} = 0,\ \ j\in \mathcal{I},\ \     t\in J
$$for any~$\xi \in \eE_k$ 
and~$e_j\in \eE$.
In particular, $w(t) \perp \eE_{k+1}$, $t\in J$. Thus, by induction, $w(t)$ is orthogonal to all $\eE_k$, $k\geq 0$, 
and so \eqref{E:6.13} holds 
for any~$k\geq 0$ and $t\in J$, 
as claimed. 

Since $\bigcup_{k\geq 0} \eE_k$ is dense in~$V^5$,
it follows that
$ w(t) =0$, $t\in J$.
In particular, $w_0 =w(1)=0$. This, along with the equation \eqref{E:6.5} of $w$, yields 
\begin{equation*}
    F(p(t),C(t), y(t), A(t)) = 0, \ \ t\in J,
\end{equation*}
which leads to (cf. \eqref{E:6.7})
\begin{equation*}
    \<\xi(y(t)), p(t)\>_{\mathbb{R}^2} 
    +  \<D_x\xi(y(t))A(t), C(t)\>_{\M_2} 
    = 0,\ \   t\in J,\ \   \xi \in V^\sigma.
\end{equation*}
As in this equality $\xi\in V^\sigma$ is arbitrary,
we infer that
$$
   p(t) =0,\ \  C(t)=0,\ \ t\in J.
$$
Evaluating at $t=1$, we get $p_0=0$, 
$C_0 =0$. 
Therefore, we conclude that
$f_0=(w_0, p_0, C_0)=0$,
that is, the kernel of $\mathbb{A}^*$ is trivial.
This completes the proof of the theorem.
\end{proof}

\section{Appendix}
\subsection{Proof of Theorem~\ref{T:2.4}}\label{S:7.1}

 We adopt the approach used in the proof of Proposition~3.4 in~\cite{CR24}. The~following lemma is an abstract version of Proposition~3.3 in~\cite{CR24}. In what follows, we denote  $\tilde Z:=Z\times\mathcal{A}(\Omega\times Z)$, noting that, under the assumptions of Theorem~\ref{T:2.4}, 
$\tilde Z$ is a compact metric space. 
 \begin{lemma}\label{continuous-measure}
Under the assumptions of Theorem~\ref{T:2.4}, there exists a continuous family of probability measures 
$\nu: \tilde Z \to \mathcal{P}(P^{d-1})$
such that 
\begin{equation}\label{invariant}
\nu_{z_0, A_0}=(A_0)_*^{-1}\int \nu_{z,A}\wh{P}_1(z_0, \dd z, \dd A),
\end{equation}
where $\mathcal{P}(P^{d-1})$ is endowed with dual-Lipschitz metric.
\end{lemma}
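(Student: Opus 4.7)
The plan is to construct $\nu$ as a limit of Cesàro averages of iterates of the natural operator $T$ suggested by the right-hand side of \eqref{invariant}. Define $T$ on continuous families $\nu \in C(\tilde Z, \mathcal{P}(P^{d-1}))$ by
$$
(T\nu)_{z_0, A_0} := (A_0)^{-1}_* \int \nu_{z,A}\, \wh{P}_1(z_0, \dd z, \dd A), \qquad (z_0, A_0) \in \tilde Z.
$$
Since $\varphi$ and $\mathcal{A}$ are continuous, $\wh{P}_1$ is Feller, and because $(A_0, \mu) \mapsto (A_0)^{-1}_* \mu$ is continuous in the dual-Lipschitz topology on compact subsets of $\Sl_d(\R)$, the operator $T$ maps $C(\tilde Z, \mathcal{P}(P^{d-1}))$ into itself and is continuous in the uniform dual-Lipschitz norm. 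Observe that \eqref{invariant} is precisely the fixed point equation $T\nu = \nu$, so the task reduces to producing a continuous fixed point of $T$.

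The key analytic input is that $T$ improves regularity in a quantitative way. Fix $f \in L(P^{d-1})$ with $\|f\|_L \le 1$ and $(z_0, A_0) \in \tilde Z$, and consider $g(z, A) := \nu_{z, A}(f \circ A_0^{-1})$ on $\tilde Z$. Since $\tilde Z$ is compact, the matrices appearing there are uniformly bounded and uniformly invertible, so $A_0^{-1}$ induces a uniformly Lipschitz map on $P^{d-1}$, giving $\|g\|_\infty \le 1$ and $|g|_\gamma \le L_0\,|\nu|_\gamma$ for a constant $L_0$ independent of $\nu$. Applying \eqref{E:ineq1} to $g$ controls the difference $|(T\nu)_{z_{01}, A_0}(f) - (T\nu)_{z_{02}, A_0}(f)|$ by $(C\|g\|_\infty + c|g|_\gamma)\, d_Z(z_{01}, z_{02})^\gamma$. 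The $A_0$-direction is treated directly using Lipschitz continuity of $A_0 \mapsto (A_0)^{-1}_*$. Combining both contributions and choosing $c$ in \eqref{E:ineq1} sufficiently small, one obtains
$$
|T\nu|_\gamma \le C' + c'\,|\nu|_\gamma
$$
for some constants $C' > 0$ and $c' \in (0, 1)$, where $|\cdot|_\gamma$ denotes the Hölder seminorm of an $\mathcal{P}(P^{d-1})$-valued map in dual-Lipschitz distance.

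Starting from any continuous initial family $\nu^{(0)}$ (say the constant family equal to the uniform measure) and setting $\nu^{(n+1)} := T\nu^{(n)}$, one iterates the inequality to obtain $\sup_n |\nu^{(n)}|_\gamma \le |\nu^{(0)}|_\gamma + \frac{C'}{1-c'}$. The Cesàro averages $\bar\nu^{(N)} := \frac{1}{N}\sum_{n=0}^{N-1} \nu^{(n)}$, which are again continuous families of probability measures by convex combination, inherit the same uniform Hölder bound. By compactness of $\tilde Z$, compactness of $\mathcal{P}(P^{d-1})$ in the weak topology, and Arzelà--Ascoli for $\mathcal{P}(P^{d-1})$-valued maps, some subsequence $\bar\nu^{(N_k)}$ converges uniformly to a continuous limit $\nu^{\infty}$. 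Finally, the telescoping identity
$$
T\bar\nu^{(N)} - \bar\nu^{(N)} = \frac{1}{N}(\nu^{(N)} - \nu^{(0)})
$$
tends to zero in uniform dual-Lipschitz norm, and continuity of $T$ forces $T\nu^{\infty} = \nu^{\infty}$, i.e., $\nu^{\infty}$ satisfies \eqref{invariant}.

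The main technical obstacle is the bookkeeping in the smoothing step: \eqref{E:ineq1} only gives Hölder smoothing in the $z$-variable, so the $A_0$-dependence of $T\nu$ must be handled via a separate Lipschitz estimate for the pushforward, and the induced Lipschitz constants of $A_0^{-1}$ acting on $P^{d-1}$ must be tracked uniformly on the compact matrix factor of $\tilde Z$. Once this is set up carefully, the contraction-type inequality $|T\nu|_\gamma \le C' + c'|\nu|_\gamma$ follows routinely, and the Cesàro argument delivers the invariant continuous family.
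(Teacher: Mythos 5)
Your proposal is correct and is essentially the paper's own argument: the one-step estimate $|T\nu|_\gamma\le C'+c'|\nu|_\gamma$ (with $c$ in \eqref{E:ineq1} taken small, the $z$-increment handled via \eqref{E:ineq1} applied to $g(z,A)=\nu_{z,A}(f\circ A_0^{-1})$ and the matrix increment via uniform Lipschitzness of the inverse action on the compact set $\mathcal{A}(\Omega\times Z)$) is exactly the paper's inductive uniform H\"older bound \eqref{Lipschitz-continuous} rephrased. The conclusion via Ces\`aro averages, Arzel\`a--Ascoli, and the telescoping identity likewise matches the paper, with your explicit telescoping step merely spelling out what the paper leaves as ``follows from the construction.''
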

\begin{proof}
For any $(z,A)\in \tilde Z$, we define recursively a sequence of measures $\{\nu_{(z,A)}^n\} \subset \mathcal{P}(P^{d-1})$ as follows: $\nu_{(z,A)}^0$ is the uniform probability measure on~$P^{d-1}$, and 
$$
 \nu_{(z,A)}^{n+1}:=A_*^{-1}\int \nu_y^{n} \wh{P}_1(z,  \dd y).
$$
 Let us show that there exists a constant $M>0$, independent of $n\ge0$, such~that \begin{equation}\label{Lipschitz-continuous}
\|\nu_{\tilde{z}_1}^n-\nu_{\tilde{z}_2}^n\|_{L(P^{d-1})}^*\le M \, d_{\tilde{Z}}(\tilde{z}_1, \tilde{z}_2)^{\gamma}
\end{equation} for any $\tilde{z}_1=(z_1, A_1), \tilde{z}_2=(z_2, A_2)\in \tilde Z$, where $\gamma$ is the number in~\eqref{E:ineq1}.   
 For this purpose, first note that the inequality \eqref{Lipschitz-continuous} holds for $n=0$. Assuming that it holds for $n\ge0$, let us prove for $n+1$. We have  
\begin{align}\label{es1}
\notag\|&\nu_{\tilde{z}_1}^{n+1}-\nu_{\tilde{z}_2}^{n+1}\|_{L(P^{d-1})}^*=\sup_{\|f\|_L\le1}\left|\langle f, \nu_{\tilde{z}_1}^{n+1} \rangle - \langle f, \nu_{\tilde{z}_2}^{n+1}\rangle\right| \\ \notag
&=\sup_{\|f\|_L\le1} \left|\langle f, (A_1)_*^{-1}\int \nu_y^n\wh{P}_1(z_1, \dd y)\rangle
- \langle f, (A_2)_*^{-1}\int \nu_y^n\wh{P}_1(z_2, \dd y)\rangle\right|\\ \notag
&=\sup_{\|f\|_L\le1}\left|\int \langle f\circ A_1^{-1}, \nu^n_y\rangle \wh{P}_1(z_1, \dd y)-\int \langle f\circ A_2^{-1}, \nu_y^n\rangle \wh{P}_1(z_2,\dd y)\right|\\ \notag
&\le\sup_{\|f\|_L\le1}\left|\int \langle f\circ A_1^{-1}-f\circ A_2^{-1},\nu_y^n\rangle \wh{P}_1(z_1, \dd y)\right|\\  
&\quad+ \sup_{\|f\|_L\le1}\left|\int \langle f\circ A_2^{-1}, \nu_y^n\rangle \wh{P}_1(z_1, \dd y)
- \int \langle f\circ A_2^{-1}, \nu_y^n\rangle \wh{P}_1(z_2, \dd y)\right| .
\end{align}
As $\mathcal{A}(\Omega\times Z)$ is compact, there are constants $C_0, C_1>0$  such that 
\begin{align}\label{es2}
|f(A_1^{-1}v)-f(A_2^{-1}v)|&\le  C_0d_{P^{d-1}}(A_1^{-1}v, A_2^{-1}v)\nonumber\\&\le C_1|A_1-A_2|\le C_1 d_{\tilde{Z}}(\tilde{z}_1, \tilde{z}_2)
\end{align}for any $f\in L(P^{d-1})$ with $\|f\|_L\le1$, $A_1, A_2\in \mathcal{A}(\Omega\times Z)$, and $v\in P^{d-1}$. Let~us 
define $g(y):=\langle f\circ A_2^{-1}, \nu_y^n\rangle$ and note that $\|g\|_\ty\le1$. Furtheremore, as \eqref{Lipschitz-continuous} holds for~$n$, there exists a constant~$C_2>0$ such that 
\begin{align*}
\notag|g(y_1)-g(y_2)|&=|\langle f\circ A_2^{-1}, \nu_{y_1}^n-\nu_{y_2}^n\rangle|\\ 
&\le \|f\circ A_2^{-1}\|_{L(P^{d-1})}\,\|\nu_{y_1}^n-\nu_{y_2}^n\|_{L(P^{d-1})}^* \nonumber\\&\le C_2\, M\, d_{\tilde{Z}}(y_1, y_2)^{\gamma}
\end{align*}for any $y_1, y_2\in \tilde{Z}$.
Therefore, $g\in C^{\gamma}(\tilde{Z})$ with $|g|_{\gamma}\le C_2M+1$.
Combining this with the inequalities \eqref{es1} and \eqref{es2} and the assumption \eqref{E:ineq1}, we derive 
\begin{align*}
\|\nu_{\tilde{z}_1}^{n+1}-\nu_{\tilde{z}_2}^{n+1}\|_{L(P^{d-1})}^*& \le C_1d_{\tilde{Z}}(\tilde{z}_1, \tilde{z}_2)\nonumber\\&\quad+\sup_{\|f\|_L\le 1}\left|\int \wh{P}_1(z_1, \dd y) g(y)-\int \wh{P}_1(z_2, \dd y)g(y)\right|\\ \notag
& \le C_1d_{\tilde{Z}}(\tilde{z}_1, \tilde{z}_2)+\left(C+c(C_2M+1)\right)d_{Z}(z_1, z_2)^{\gamma}\\\notag
&\le C_1d_{\tilde{Z}}(\tilde{z}_1, \tilde{z}_2)+\left(C+c(C_2M+1)\right)d_{\tilde{Z}}(\tilde{z}_1, \tilde{z}_2)^{\gamma} \\ \notag
&\le \left(C_1D+C+c(C_2M+1)\right)d_{\tilde{Z}}(\tilde{z}_1, \tilde{z}_2)^{\gamma}\\
&\le M d_{\tilde{Z}}(\tilde{z}_1, \tilde{z}_2)^{\gamma},
\end{align*}  provided that $M= 2(C_1D+C)$ and $c$ in  \eqref{E:ineq1} is small enough such that
$$
c\le \frac{C_1D}{2 C_1C_2D+1},
$$where  $D:=\max_{\tilde{z}_1, \tilde{z}_2 \in \tilde Z} d_{\tilde{Z}}(\tilde{z}_1, \tilde{z}_2)^{1-\gamma}$. By induction, we get that \eqref{Lipschitz-continuous} holds for any $n\ge0$. 
 
Applying the Arzela--Ascoli theorem to the sequence $\sigma_{\tilde z}^n:=\frac1n\sum_{j=1}^n \nu_{\tilde z}^j$, we~find a subsequence~$n_j$ along which $\sigma_{\tilde z}^{n_j}$ converges to a continuous map $\nu: \tilde Z \to \mathcal{P}(P^{d-1})$. Finally, from the construction it follows that $\nu$ satisfies~\eqref{invariant}.
\end{proof}
The derivation of Theorem~\ref{T:2.4} from the preceding lemma follows the same argument as the derivation of Proposition 3.4 from Propositions~3.1 and~3.3 in~\cite{CR24}. 
\begin{proof}[Proof of Theorem~\ref{T:2.4}]
	 Let us consider the transition function 
$$
P^{\tilde Z}(z,A, \Gamma):=\wh P(z, \Gamma), \quad (z,A)\in\tilde Z,\,\, \Gamma \in \mathcal{B}(\tilde Z),
$$and note that $P^{\tilde Z}(z,A, \Gamma)$ is independent of $A$ and the measure $ P^{\tilde Z} (\mu\times \delta_{\Id_{\R^d}})$ defined~by
 $$
 P^{\tilde Z} (\mu\times \delta_{\Id_{\R^d}})(\Gamma):=\int_Z P^{\tilde Z}(z,\Id_{\R^d}, \Gamma) \mu (\dd z)
 $$ is the unique stationary measure for $P^{\tilde Z}$, where $\delta_{\Id_{\R^d}}$ is the Dirac measure concentrated at $\Id_{\R^d}$. Let $\nu: \tilde Z \to \mathcal{P}(P^{d-1})$ be as in Lemma~\ref{continuous-measure}. The conditions of Proposition~3.1 in~\cite{CR24} are satisfied for the transition function~$P^{\tilde Z}$ and the cocycle on~$\tilde Z$ defined by the projection $(z,A)\mapsto A$. Therefore, by Proposition~3.1 in~\cite{CR24}, there holds
\begin{equation}\label{E:Lede}
	(A_0)_* \nu_{z_0,A_0}=\nu_{z_1,A_1}, \quad  P^{\tilde Z}(z_0,A_0, \dd z_1, \dd A_1) P^{\tilde Z} (\mu\times \delta_{\Id_{\R^d}})(\dd z_0,\dd A_0)\text{-a.s.}.
\end{equation} In view of the equation~\eqref{invariant}, the measure $(A)_* \nu_{z,A}$ is independent of $A$; we~define $
\nu_z:=(A)_* \nu_{z,A}
$ and note that $\nu:  Z \to \mathcal{P}(P^{d-1})$ is weakly continuous. Then, \eqref{E:Lede} and the continuity of $\nu$ and $\aA$ imply that
$$
\left(\aA^1_{\bomega, z}\right)_* \nu_{z}=\nu_{ {\varphi^1_{\bomega}}z}
$$for any $z\in \supp \mu$ and $\bpP$-a.e. $\bomega$. Iterating this, we get the required~result.
\end{proof}

\subsection{Mixing via controllability}
\label{S:7.2}

In this section, we discuss the proofs of Theorem~\ref{T:3.1} and Lemma~\ref{E:L3.3} which are based on some ideas and results from the papers \cite{KNS20,KNS20.0}.

\subsubsection{Proofs of Theorem~\ref{T:3.1}}

The formulation of Theorem~\ref{T:3.1} is similar to that of  Theorem~1.1 in~\cite{KNS20.0}. A minor difference is that the phase space $\HH$ here is a product of a separable Hilbert space and a manifold, instead of being just a separable Hilbert space; this difference does not affect the proofs. A more important difference is that here we do not assume that the nonlinear mapping $\eta\mapsto S (z,\eta)$ is analytic.  

  The proof of Theorem~1.1 in~\cite{KNS20.0} is derived from\footnote{An improved version of Proposition~2.1 in~\cite{KNS20.0} is provided in~Proposition~\ref{PP:7.4}.} Proposition~2.1 therein by using only Hypotheses \hyperlink{H1}{\rm(H$_1$)}, \hyperlink{H2}{\rm(H$_2$)}, and~\hyperlink{H4}{\rm(H$_4$)}; therefore, the arguments extend to the current setting as well. The~proof of Proposition~2.1 is carried out in~\cite{KNS20}, where it is called Proposition~2.3. In the proof of the latter, the hypothesis on the analyticity of the mapping $\eta\mapsto S (z,\eta)$ is used in Theorem~2.8 in Section~2.5, where approximate right inverse is constructed for linear operators with dense image. Actually, in that theorem only the analyticity of the derivative mapping~$\eta\mapsto  (D_\eta   S)(z,\eta)$~is~used.

   Let us recall the framework of Section~2.5 in~\cite{KNS20} and give a proof of the result obtained there under the hypotheses of the current paper, i.e., without\footnote{To simplify notation and the presentation, in this and next subsections, we will not care about the manifold component and assume that the phase space is a separable Hilbert~space.} analyticity.  Thus the proof of our Theorem~\ref{T:3.1} will be established. Let us assume that
   \begin{itemize}
      	 	\item $\XX$ is a compact metric space,
   	\item  $E, F$, and~$\TT$ are separable Hilbert spaces and $\VV$ is a Hilbert space compactly embedded into~$\TT$,
   	 	 	\item $\ell\in\PP(E)$ is a measure with a compact support~$\KK$,
    	 	 	 	 	\item $\{\psi_j\}$ is an orthonormal basis in~$F$ and~$F_M:=\text{span}\{\psi_1,\dots,\psi_M\}$ for~$M\ge1$.
   \end{itemize}
 The following is a version of Theorem~2.8 in~\cite{KNS20} without analyticity assumption for the linear mapping $\mathbb{A}$.
\begin{theorem} \label{p2.5}
Let $\mathbb{A}:\XX\times E\to\LL(F,\TT)$ be a continuous mapping such that, for any $z\in \XX$, there is a Borel set $\KK^z\subset\KK$ verifying 
\begin{enumerate}
	\item[\rm (a)] $\ell (\KK^z)=1$,
	\item[\rm (b)] the image of $\mathbb{A}(z,\eta)$ is dense in~$\TT$ for any $\eta\in\KK^z$. 
\end{enumerate} 
 Then,  for any $\e=(\e_1,\e_2)\in(0,1)^2$, there is a continuous function 
 $$
 \FFFF_\e(\cdot,\cdot):\XX\times E\to \R_+
 $$ such that $\eta\to \FFFF_\e(z,\eta)$ is analytic for any $z\in \XX$, and there is
  an integer $M_\e\ge1$ and positive constants $\nu_{\e_2}$ and $C_\e$ such that the following properties are satisfied.
\begin{description}
\item[$\bullet$]We have
\begin{equation}
	\ell(\KK_{\e}^{z})\ge 1-\e_1\quad	\mbox{for $z\in \XX$},
\label{E:7.5}
\end{equation}where
\begin{equation} \label{E:7.6}
\KK_\e^{z}:=\left\{\eta\in\KK: \FFFF_\e(z,\eta)\le\nu_{\e_2}\right\}.
\end{equation}
\item[$\bullet$]
Let $\DD_\e\subset  \XX\times \KK$ be the compact subset defined by
\begin{equation} \label{E:7.7}
\DD_\e:=\left\{(z,\eta)\in \XX\times \KK: \FFFF_\e(z,\eta)\le2\nu_{\e_2}\right\}.
\end{equation}
There is a continuous mapping $R_\e:\DD_{\e}\to\LL(\TT,F)$ such that
\begin{gather}
\textup{Im} \left(R_\e(z,\eta)\right)\subset F_{M_\e}, \quad
\|R_\e(z,\eta)\|_{\LL(\TT,F)}\le C_\e,
\quad (z,\eta)\in \DD_\e,
\label{E:7.8}\\
\|\mathbb{A}(z,\eta)R_\e(z,\eta)f-f\|_\TT\le \e_2\|f\|_\VV,
\quad (z,\eta)\in \DD_\e,\, f\in \VV.
\label{E:7.9}
\end{gather}
\end{description}
\end{theorem}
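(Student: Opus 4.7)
My plan is to adapt the scheme of Theorem~2.8 in \cite{KNS20}. The key new idea, needed because $\mathbb{A}$ is no longer analytic in $\eta$, is to decouple the two constructions: the right-inverse $R_\e$ will be built directly from $\mathbb{A}$ using only continuity, whereas the controlling function $\FFFF_\e$ will be defined in terms of a polynomial-in-$\eta$ approximation of $\mathbb{A}$, which automatically yields the required analyticity. I implicitly use that $\mathbb{A}(z,\eta)$ takes values in the smaller space $\LL(F,\VV)$ with uniform bounds on $\XX\times\KK$, as is the case in the paper's applications. First, using the compact embedding $\VV\hookrightarrow\TT$, I would choose an orthonormal basis $\{f_n\}$ of $\TT$ such that the projection $\Pi_N$ onto $\mathrm{span}\{f_1,\dots,f_N\}$ satisfies $\|(\Id-\Pi_N)f\|_\TT\leq\alpha_N\|f\|_\VV$ with $\alpha_N\to 0$. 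Writing $T_{M,N}(z,\eta):=\Pi_N\mathbb{A}(z,\eta)P_M$, where $P_M$ is the orthogonal projection of $F$ onto $F_M$, hypothesis~(b) and the continuity of $\mathbb{A}$ give, pointwise on $\XX\times\KK^z$, a finite $M$ making $T_{M,N}(z,\eta)$ surjective. Combining this with compactness of $\XX$ and $\KK$, the full-measure of $\KK^z$, and an Egorov-type selection uniform in $z$, I would fix integers $N_\e,M_\e$ and a constant $\kappa_0>0$ so that
\[
   \wt\KK^z:=\bigl\{\eta\in\KK:\ T_{M_\e,N_\e}(z,\eta)\,T_{M_\e,N_\e}(z,\eta)^*\geq 2\kappa_0\Id\bigr\}
\]
satisfies $\ell(\wt\KK^z)\geq 1-\e_1/2$ for every $z\in\XX$.

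Next, I would approximate $T_{M_\e,N_\e}$ uniformly on $\XX\times\KK$ by a continuous map $\wt T:\XX\times E\to\LL(F_{M_\e},\Pi_{N_\e}\TT)$ that is polynomial in $\eta$. Since the target is finite-dimensional, this is done in two stages: (a) replace $\eta$ by its projection $\pi_L\eta$ onto $E_L:=\mathrm{span}\{\vf_1,\dots,\vf_L\}$, with $L$ chosen large enough that the uniform continuity of $T_{M_\e,N_\e}$ on the compact set $\XX\times\KK$ forces $\|T_{M_\e,N_\e}(z,\eta)-T_{M_\e,N_\e}(z,\pi_L\eta)\|<\delta/2$; (b) on the compact set $\XX\times\pi_L(\KK)\subset\XX\times E_L$, use Stone--Weierstrass to approximate $(z,\eta_L)\mapsto T_{M_\e,N_\e}(z,\eta_L)$ within $\delta/2$ by $\wt T(z,\eta_L)$, polynomial in $\eta_L$ with $z$-continuous coefficients obtained via a finite partition of unity on $\XX$. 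Setting $\wt G:=\wt T\,\wt T^{*}\geq 0$, which is polynomial in $\eta$, and choosing a small $\kappa>0$, I define
\[
   \FFFF_\e(z,\eta):=\frac{1}{\det\bigl(\wt G(z,\eta)+\kappa\Id\bigr)}.
\]
The denominator is a polynomial in the finitely many coordinates of $\pi_L\eta$ with $z$-continuous coefficients, bounded below by $\kappa^{N_\e}>0$; hence $\FFFF_\e$ is jointly continuous on $\XX\times E$ and real-analytic in $\eta$ for every $z\in\XX$.

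For the verification, I choose $\delta\ll\kappa_0,\kappa$ and set $\nu_{\e_2}:=(\kappa_0+\kappa)^{-N_\e}$. Using $\|\wt G-G\|<C\delta$ for $G:=T_{M_\e,N_\e}T_{M_\e,N_\e}^{*}$, on $\wt\KK^z$ one obtains $\wt G+\kappa\Id\geq(\kappa_0+\kappa)\Id$ and hence $\FFFF_\e\leq\nu_{\e_2}$, which gives \eqref{E:7.5}. Conversely, on $\DD_\e=\{\FFFF_\e\leq 2\nu_{\e_2}\}$, a determinant lower bound combined with the uniform upper bound on $\wt G$ (inherited from compactness) yields a positive lower bound on the smallest eigenvalue of $\wt G+\kappa\Id$, and hence on $G$ for $\delta$ small enough. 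I then define the minimum-norm right inverse
\[
   R_\e(z,\eta):=P_{M_\e}\mathbb{A}(z,\eta)^{*}\Pi_{N_\e}\,G(z,\eta)^{-1}\Pi_{N_\e},
\]
which is continuous on $\DD_\e$, takes values in $F_{M_\e}$, and admits a uniform norm bound $C_\e$, giving \eqref{E:7.8}. Finally, since $\Pi_{N_\e}(\mathbb{A}R_\e f-f)=0$, the identity $\mathbb{A}R_\e f-f=(\Id-\Pi_{N_\e})(\mathbb{A}R_\e f-f)$ together with the smoothing $\mathbb{A}\in\LL(F,\VV)$ and $\|R_\e\|\leq C_\e$ yields $\|\mathbb{A}R_\e f-f\|_\TT\lesssim\alpha_{N_\e}\|f\|_\VV$, so choosing $N_\e$ large enough (after $M_\e$ and $C_\e$ are fixed) produces~\eqref{E:7.9}.

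The most delicate step is the construction of the polynomial approximation $\wt T$, which must be simultaneously close to $T_{M_\e,N_\e}$ on $\XX\times\KK$, jointly continuous in $(z,\eta)$, and polynomial in $\eta$ with $z$-continuous coefficients. The two-stage finite-dimensional reduction (projection $\pi_L$ followed by Stone--Weierstrass plus a partition of unity in $z$) is the main tool. Once this approximation is available, the regularized inverse determinant automatically delivers an analytic-in-$\eta$ function whose sub-level sets dominate the ``good'' set for the original map $\mathbb{A}$, and the existence of $R_\e$ reduces to standard Moore--Penrose theory for the uniformly invertible finite-rank operator $T_{M_\e,N_\e}$.
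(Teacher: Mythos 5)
There is a genuine gap, and it lies in the final verification of \eqref{E:7.9}. Your construction first needs, uniformly over a set of $\eta$'s of $\ell$-measure $\ge 1-\e_1$, a surjectivity bound $T_{M_\e,N_\e}T_{M_\e,N_\e}^*\ge 2\kappa_0\Id$ for the truncation $T_{M_\e,N_\e}=\Pi_{N_\e}\mathbb{A}P_{M_\e}$; here $M_\e$ and $\kappa_0$ are produced by the Egorov/compactness selection \emph{after} $N_\e$ is fixed, and they degenerate as $N_\e$ grows ($M_\e\to\infty$, $\kappa_0\to0$, with no rate). Consequently the bound $C_\e$ on $\|R_\e\|$ (which behaves like an inverse power of the smallest eigenvalue of $G$ on $\DD_\e$, and through your determinant argument is at best exponentially small in $N_\e$) is a function of $N_\e$. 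Your last step estimates the residual by $\alpha_{N_\e}\bigl(1+\|\mathbb{A}\|_{\LL(F,\VV)}C_\e\bigr)\|f\|_\VV$ and proposes to conclude by ``choosing $N_\e$ large enough (after $M_\e$ and $C_\e$ are fixed)''. This order of quantifiers is circular: $M_\e$, $\kappa_0$ and hence $C_\e$ cannot be fixed before $N_\e$, and there is no mechanism relating the embedding rate $\alpha_{N_\e}$ to the blow-up of $C_\e(N_\e)$, so the product $\alpha_{N_\e}C_\e$ need not ever drop below $\e_2$. In addition, your argument uses the standing assumption that $\mathbb{A}$ takes values in $\LL(F,\VV)$ with uniform bounds on $\XX\times\KK$; this holds in the paper's application but is not among the hypotheses of the theorem, which only assumes continuity into $\LL(F,\TT)$ and a.e.\ density of the image, so even apart from the quantifier problem you would be proving a weaker statement.

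The paper's proof avoids both issues by never asking for surjectivity or singular-value lower bounds of a truncated operator. It takes $R_\e(z,\eta)={\mathsf P}_{M_\e}\mathbb{A}^*(z,\eta)\bigl(\mathbb{A}\mathbb{A}^*(z,\eta)+\gamma_\e I\bigr)^{-1}$ (Tikhonov regularization, so \eqref{E:7.8} holds trivially with $C_\e=\gamma_\e^{-1}\sup\|\mathbb{A}\|$), and defines $\FFFF_\e$ as the sum of squared residuals $\|\mathbb{A}R_{\gamma_\e}f_j-f_j\|_\TT^2$ over a finite $\e_2/4$-net $\{f_j\}$ of the compact set $B_\VV(0,1)\subset\TT$, computed for an analytic-in-$\eta$ uniform approximation $\mathbb{A}_n$ of $\mathbb{A}$. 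The measure bound \eqref{E:7.5} then comes from the a.e.\ density of the image together with a uniform-in-$z$ selection lemma (Lemma~3.1 in~\cite{KNS20}), and, crucially, membership in $\DD_\e$ \emph{directly certifies} the residual bound \eqref{E:7.9} (up to the net error $\e_2/2$ and the analytic-approximation error), with the projection ${\mathsf P}_{M_\e}$ added only at the end by a compactness argument. If you want to keep your decoupled scheme, the residual on $\DD_\e$ has to be controlled by the defining function $\FFFF_\e$ itself rather than through a chain $N_\e\mapsto(M_\e,\kappa_0)\mapsto C_\e$ whose terms you cannot compare with $\alpha_{N_\e}$.
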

\begin{proof} 
For any number $\gamma>0$ and integer $M\ge1$, let us denote 
\begin{align}
		G(z,\eta)&:=\mathbb{A}(z,\eta)\mathbb{A}^*(z,\eta),\nonumber\\
		R_\gamma(z,\eta)&:=\mathbb{A}^*(z,\eta)\left(G(z,\eta)+\gamma I\right)^{-1},\label{EE:EER}\\
		R_{M,\gamma}(z,\eta)&:={\mathsf P}_MR_\gamma(z,\eta),\nonumber
\end{align}
where ${\mathsf P}_M:F\to F$ is the orthogonal projection onto~$F_M$. Note that $\left(G(z,\eta)+\gamma I\right)^{-1}$ is well defined, since $G(z,\eta)\ge0$.

We will show that \eqref{E:7.8} and \eqref{E:7.9} are satisfied for $R_{M,\gamma}$ for some choice of~$\gamma$, $M$, and $C$. Already, note that \eqref{E:7.8} holds for any $\gamma$ and $M$. Indeed, by definition, $\textup{Im} \left(R_{M,\gamma}(z,\eta)\right)\subset F_M$ and we have
\begin{align*}
\|R_{M,\gamma}(z,\eta)\|_{\LL(\TT,F)}&\le \|\mathbb{A}(z,\eta)\|_{\LL(\TT,F)} \|\left(G(z,\eta)+\gamma I\right)^{-1}\|_{\LL(\TT,F)}\\
&\le \gamma^{-1} \sup_{(z,\eta)\in \XX\times \KK}\|\mathbb{A}(z,\eta)\|_{\LL(\TT,F)} =:C<\ty,
\end{align*}
 where we used the bound $\|\left(G(z,\eta)+\gamma I\right)^{-1}\|_{\LL(\TT,F)}\le \gamma^{-1}$.

Let us take any $\e=(\e_1,\e_2)\in(0,1)^2$ and assume that we have constructed a continuous function $\FFFF_\e:\XX\times E\to \R_+$ such that $\eta\to \FFFF_\e(z,\eta)$ is analytic for any $z\in \XX$, there are positive numbers $\nu_{\e_2}$ and $\gamma_\e$ such that \eqref{E:7.5} holds with the set $\KK_\e^{z}$ in \eqref{E:7.6}, and the following inequality is verified
\begin{equation}\label{E:10}
\sup_{(z,\eta)\in \DD_\e}\|\mathbb{A}(z,\eta)R_{\gamma_\e}(z,\eta)f-f\|_\TT<\e_2, \quad f\in  B_\VV(0,1)
\end{equation}with the set $\DD_\e$ in \eqref{E:7.7}. Using the compactness of $\KK$, $ \DD_\e$, and $B_\VV(0,1)$ and the convergence of ${\mathsf P}_M$ to $I$ as $M\to \ty$, it is easy to see that 
$$
\sup_{(z,\eta)\in \DD_\e}\|\mathbb{A}(z,\eta)R_{M_\e,\gamma_\e}(z,\eta)f-f\|_\TT<\e_2, \quad f\in  B_\VV(0,1)
$$ for sufficiently large $M_\e\ge1$; see the proof of Theorem~2.8 in~\cite{KNS20} for~details. 

Thus, it remains to construct $\FFFF_\e$, $\nu_{\e_2}$, and $\gamma_\e$.
 To this end, let $ \hat\nu_{\e_2}:=\e_2^2/16$, and
 let us define a continuous function $\hat\FFFF_\gamma:\XX\times E\to \R_+$~by
$$
\hat\FFFF_\gamma(z,\eta)=\sum_{j=1}^N \|\mathbb{A}(z,\eta)R_{\gamma}(z,\eta) f_j-f_j\|_\TT^2,
$$where 
\begin{equation}\label{E:7.11}
	\{f_j: j=1,\ldots, N\} \,\, \text{is an $\e_2/4$-net for the compact $B_\VV(0,1)\subset \TT$.}
\end{equation} By Lemma~3.1 in~\cite{KNS20}, there is $\gamma_{\e}>0$ such that
\begin{equation}
	\ell(\hat\KK_{\e}^{z})\ge 1-\e_1\quad	\mbox{for $z\in \XX$},
\label{E:7.12}
\end{equation}where
$$
\hat\KK_\e^{z}:=\left\{\eta\in\KK: \hat\FFFF_{\gamma_\e}(z,\eta)\le\hat\nu_{\e_2}\right\}.
$$   On the other hand, for any $f\in B_\VV(0,1)$, we have
    \begin{align}
    \|\mathbb{A}(z,\eta)R_{\gamma_\e}(z,\eta)f-f\|_\TT &\le \underset{1 \le j \le N}{\text{min}}\left(\|\mathbb{A}(z,\eta)R_{\gamma_\e}(z,\eta)(f-f_j)\|_\TT+\|f-f_j\|_\TT\right)\nonumber\\&\quad
    +\hat\FFFF_{\gamma_\e}(z,\eta)^\frac{1}{2}	\nonumber\\&\le \frac{\e_2}{2}+\hat\FFFF_{\gamma_\e}(z,\eta)^\frac{1}{2},\label{E:7.13}
    \end{align}
    where we used \eqref{E:7.11} and the fact that the norm of the operator $G(G+\gamma_\e I)^{-1}$ is bounded by $1$. As the map $\eta\to \hat\FFFF_\gamma(z,\eta)$ is not necessarily analytic, we introduce analytic approximations.

 Let $\mathbb{A}_n:\XX\times E\to\LL(F,\TT)$ be a sequence of continuous maps such that 
    \begin{itemize}
  	\item $\eta\to \mathbb{A}_n(z,\eta)$ is analytic for any $z\in \XX$ and $n\ge1$,
  	\item $\sup_{(z,\eta)\in \XX\times \KK} \|\mathbb{A}_n(z,\eta)-\mathbb{A}(z,\eta)\|_{\LL(F,\TT)}\to 0 $ as $n\to \infty.$
  	  \end{itemize}Let us set
  $$
  	  		  	  	\FFFF_{n,\e}(z,\eta):=\sum_{j=1}^N \|\mathbb{A}_n(z,\eta)R_{n,\gamma_\e}(z,\eta) f_j-f_j\|_\TT^2,
$$
  where 
  \begin{align*}
  	R_{n,\gamma_\e}(z,\eta)&:=  \mathbb{A}_n^*(z,\eta)\left(G_n(z,\eta)+\gamma_\e  I\right)^{-1},\\
  	G_n(z,\eta)&:=\mathbb{A}_n(z,\eta)\mathbb{A}_n^*(z,\eta).
  \end{align*}
Then, $\FFFF_{n,\e}(z,\eta):\XX\times E\to \R_+$
    is  a continuous function and $\eta\to \FFFF_{n,\e}(z,\eta)$ is analytic for any $z\in \XX$. Our goal is to show that \eqref{E:7.5} and \eqref{E:10} hold with~$\FFFF_{n,\e}$  for some number $\nu_{\e_2}>0$ and integer $n\ge1$.

    Let us take $\delta:= \e^2_2/24$  and choose $n\ge1$ so large that
\begin{equation}\label{E:7.14}
	    \sup_{(z,\eta)\in \XX\times \KK } |\FFFF_\e(z,\eta)-\FFFF_{n,\e}(z,\eta)|<\delta.
\end{equation}
    Clearly,
    $$
    \left\{\eta\in\KK: \hat\FFFF_\e(z,\eta)\le\hat\nu_{\e_2}\right\}\subset \left\{\eta\in\KK: \FFFF_{n,\e}(z,\eta)\le\hat\nu_{\e_2}+\delta\right\}.
    $$
   Then, in view of \eqref{E:7.12}, we have that \eqref{E:7.5} is satisfied for $\KK_\e^{z}$ as in \eqref{E:7.6} with $\nu_{\e_2}:=\hat\nu_{\e_2}+\delta$. From \eqref{E:7.13} and \eqref{E:7.14} it follows that, if $\FFFF_{n,\e}(z,\eta) \le 2 \nu_{\e_2}$, then
\begin{align*}
    \|\mathbb{A}(z,\eta)R_{\gamma_\e}(z,\eta)f-f\|_\TT & \le \frac{\e_2}{2}+\hat\FFFF_{\gamma_\e}(z,\eta)^\frac{1}{2} \\&\le \frac{\e_2}{2}+ \left(\FFFF_{n,\e}(z,\eta) +\delta \right)^\frac{1}{2}\\&\le \frac{\e_2}{2}+ \left( 2 \hat \nu_{\e_2}  +3\delta \right)^\frac{1}{2}=\e_2, 	
\end{align*}
 which completes the proof of the theorem.
 \end{proof}

\subsubsection{Proof of Lemma~\ref{E:L3.3}}\label{SS:7.2.2}

Abusing the notation, we denote by the same symbol $\|\cdot\|$ the distances in~$\HH$ and~$\HH\times   \M_d(\R)$. Under Hypotheses {\rm \hyperlink{H1}{(H$_1$)}} and {\rm \hyperlink{H5}{(H$_5$)}}, there is a number $q_0\in (0, 1)$ such that
$$\| R_1(z;\zeta)- R_1(z';\zeta)\|\le q_0^{-1}\|z-z'\|, \quad  z, z'\in \mathcal{X},\, \zeta\in \mathcal{K}.$$
 Let us set 
$\mathbb{X}:=\mathcal{X}\times \mathcal{X}$ and 
$\mathbb{Z}:=\wh{Z}\times \wh{Z}$, where $\wh{Z}:=\mathcal{X} \times \mathcal{A}(\KK\times \XX)$.
For any numbers $d>0$ and $q\in (0,q_0)$ and any integer $n\ge0$, let us introduce the pairwise disjoint sets
\begin{align*}
\mathbb{X}_{n}&:=\left\{(z, z')\in \mathbb{X}: q^{n+1}d<\|z-z'\|\le q^nd\right\},\\
\mathbb{Z}_{-1}&:=\left\{(y,y')\in \mathbb{Z}: d< \|y-y'\|\right\},\\
 \mathbb{Z}_n&:=\left\{(y,y')\in \mathbb{Z}: q^{n+1}d<\|y-y'\|\le q^nd\right\}.
\end{align*}

The proof of Lemma~\ref{E:L3.3} is based on the following result, which is a version of Theorem~2.2 in~ \cite{KNS20} with power $\gamma$ in property (b) specified~to~be~$1/2$.
\begin{proposition}\label{esitimate-triple-semigroup}
Under the  assumptions of Lemma~\ref{E:L3.3}, for any $\nu\in (0,1)$ and  $q\in (0,q_0)$, there are constants $d_0\in (0,1)$ and $C>0$ such that for any $d\in (0, d_0)$, we can find a probability space $(\Omega, \mathcal{F}, \mathbb{P})$ and measurable functions~$V, V': \mathbb{X} \times \Omega\to \wh{Z}$ such that
\begin{itemize}
  \item[{\rm (a)}] For any $(z, z')\in \mathbb{X}$, the laws of $V(z, z';\cdot)$ and $V'(z,z'; \cdot)$ coincide with $\wh{P}_1(z, \cdot)$ and $\wh{P}_1(z',\cdot)$. Moreover, $V(z, z; \cdot)=V'(z,z; \cdot)$ almost surely for any $z\in \mathcal{X}$.
   \item[{\rm (b)}] For any $(z, z')\in \mathbb{X}_n$, we have
      \begin{gather}
      \mathbb{P}\left\{(V(z,z'),V'(z,z'))\in \mathbb{Z}_m \,\,\text{for some  $m\ge n+2$}\right\}\ge 1-\nu, \label{E:aa1aa} \\
      \mathbb{P}\left\{(V(z,z'),V'(z,z'))\in \mathbb{Z}_m\,\,\text{for some  $m\le n-2$}\right\}\le C\|z-z'\|^{\frac12}, \label{E:aa2aa}
      \end{gather}where  $n\ge 0$ in \eqref{E:aa1aa} and $n\ge 1$ in \eqref{E:aa2aa}.
\end{itemize}
\end{proposition}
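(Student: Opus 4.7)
The plan is to adapt the coupling construction of Theorem~2.2 in \cite{KNS20} to the extended map $R_1(z;\eta)=(S(z,\eta),\aA_{\eta,z})$ valued in $\wh Z$, using Hypothesis~\hyperlink{H7}{\rm(H$_7$)} in place of \hyperlink{H3}{\rm(H$_3$)} so that the linearization acts on the full triple. The scheme rests on four ingredients: (i) an approximate right inverse of $D_\eta R_1(z,\eta)$ produced by Theorem~\ref{p2.5}, (ii) a shift of the control that infinitesimally compensates the discrepancy $z'-z$, (iii) a maximal coupling of the shifted and unshifted noise laws via the decomposability \hyperlink{H4}{\rm(H$_4$)}, and (iv) the uniform Lipschitz bound $\|R_1(z;\eta)-R_1(z';\eta)\|\le q_0^{-1}\|z-z'\|$ that controls the bad branch.

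First, I would invoke Theorem~\ref{p2.5} with $F:=E$, target $\TT:=\widehat{\mathcal{T}}$, linear map $\mathbb{A}(z,\eta):=D_\eta R_1(z,\eta)$, and $\VV$ the compactly embedded subspace of $\widehat{\mathcal{T}}$ furnished by combining the parabolic smoothing of the $\hH$-component (Hypothesis \hyperlink{H1}{\rm(H$_1$)}) with the continuous dependence of the cocycle (Hypothesis \hyperlink{H5}{\rm(H$_5$)}). Hypothesis \hyperlink{H7}{\rm(H$_7$)} guarantees that the dense-image set $\widehat{\mathcal{K}}^z$ has full $\ell$-measure for every $z\in\XX$, so the theorem delivers, for prescribed $\e=(\e_1,\e_2)\in(0,1)^2$, a finite-dimensional approximate right inverse $R_\e(z,\eta)$ on a compact set $\DD_\e\subset\XX\times\KK$ satisfying \eqref{E:7.5}--\eqref{E:7.9} uniformly in $z$.

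Next, for $(z,z')\in\mathbb{X}$ with $\|z-z'\|\le q^n d$ and $(z,\eta)\in\DD_\e$, I would set
\[
\Psi(z,z',\eta):=-R_\e(z,\eta)\bigl(R_1(z';\eta)-R_1(z;\eta)\bigr)\in F_{M_\e},
\]
so that $\|\Psi\|\lesssim \|z-z'\|$, with an implicit constant depending on $\e$. The twice continuous differentiability of $R_1$ in both variables (Hypotheses \hyperlink{H1}{\rm(H$_1$)} and \hyperlink{H5}{\rm(H$_5$)}), combined with \eqref{E:7.9}, yields
\[
\bigl\|R_1(z';\eta+\Psi)-R_1(z;\eta)\bigr\|\le \e_2\|z-z'\|+C\|z-z'\|^2,
\]
which is strictly less than $q^{n+2}d$ provided $\e_2$ and $d_0$ are chosen sufficiently small in terms of $q$. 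Using the decomposability \hyperlink{H4}{\rm(H$_4$)} and the Lipschitz continuity of the densities $\rho_{jl}$, I would then build a maximal coupling of the laws of $\eta$ and $\eta+\Psi$ from the maximal couplings of the finitely many shifted coordinates $\xi_{jl}$, whose joint total variation is bounded by $C_\e\|z-z'\|$. Setting $V(z,z'):=R_1(z;\eta)$ and $V'(z,z'):=R_1(z';\eta+\Psi)$ on the coupling success event, and $V(z,z;\cdot)\equiv V'(z,z;\cdot)$ on the diagonal, produces the required pair of measurable functions, and the success event has probability at least $1-\e_1-C_\e\|z-z'\|\ge 1-\nu$ for $d_0$ small, which secures \eqref{E:aa1aa}.

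For the bad estimate \eqref{E:aa2aa}, outside the coupling success event one still has the deterministic Lipschitz bound $\|V-V'\|\le q_0^{-1}\|z-z'\|$, so $(V,V')\in\mathbb{Z}_m$ with $m\le n-2$ only on a subset of the coupling failure, whose probability is controlled by $\ell(\DD_\e^c)$ plus the total variation $C_\e\|z-z'\|$. Following the balancing argument of Section~2.6 in \cite{KNS20}, the exponent $\tfrac12$ is produced by optimizing $\e$ (and hence $C_\e$) against the scale $d$ of $\|z-z'\|$. The main obstacle will be ensuring that the second-order Taylor remainder of $R_1$ in the $\eta$-direction lives in the same compactly embedded subspace $\VV$ that feeds the approximate right inverse; this requires that Hypothesis \hyperlink{H5}{\rm(H$_5$)} provides the matrix cocycle with enough regularity to upgrade the purely Eulerian scheme of \cite{KNS20} to the extended state space $\wh Z$, and the choice of $\VV$ in the first step must accommodate both the velocity and cocycle factors simultaneously.
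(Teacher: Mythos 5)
Your skeleton (approximate right inverse from Theorem~\ref{p2.5}, an $\eta$-dependent compensating shift, a coupling of shifted and unshifted noise, and the Lipschitz bound for the bad branch) is the same as the paper's, which passes through the intermediate Proposition~\ref{PP:7.4} and then repeats the derivation of Theorem~2.2 from Proposition~2.3 in \cite{KNS20}. However, two steps of your argument have genuine gaps. The first is the estimate \eqref{E:aa2aa}: you bound the bad event by $\ell(\DD_\e^c)+C_\e\|z-z'\|\le \e_1+C_\e\|z-z'\|$ and propose to ``optimize $\e$ against $d$''. This cannot work: the constants $d_0$ and $C$ are fixed once $\nu$ and $q$ are chosen, while $(z,z')$ ranges over all $\mathbb{X}_n$, so $\|z-z'\|\le q^nd\to 0$ as $n\to\infty$, and the constant term $\e_1>0$ can never be dominated by $C\|z-z'\|^{1/2}$. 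The paper avoids this by cutting the shift off, $\Phi^{z,z'}(\eta)=0$ for $\eta\notin\wh{\mathcal{K}}^{z}_{\sigma,\theta}$, so that on the event where the coupled noises coincide but $\eta$ lies outside the good set one still has $\|V-V'\|\le q_0^{-1}\|z-z'\|\le q_0^{-1}q^nd<q^{n-1}d$ (this is where $q<q_0$ is used): the pair can drop at most one level and never reaches $m\le n-2$. Hence the event in \eqref{E:aa2aa} is contained in the coupling-failure event alone and is bounded by the total-variation distance only, with no $\e_1$ term; your proposal neither imposes this cut-off nor exploits this containment.

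The second gap is the claimed total-variation bound $C_\e\|z-z'\|$ obtained ``from maximal couplings of the finitely many shifted coordinates''. The shift $\Psi(z,z',\eta)$ depends on $\eta$ (through $R_\e(z,\eta)$ and the state difference), and, once cut off as required above, it is discontinuous across the boundary of the good set, so its pushforward of $\ell$ is not a product of shifted marginals. Controlling $\|\ell-\Psi^{z,z'}_*\ell\|_{\textup{var}}$ is exactly the content of the measure-transformation theorem (Theorem~2.4 in \cite{KNS20}), whose hypotheses require an estimate on the $\ell$-measure of a tubular neighborhood of the level set of $\FFFF_\e(z,\cdot)$ where the cut-off occurs; since analyticity is dropped in the present setting, the paper must select the cut-off levels to be regular values via Sard's theorem (Lemma~\ref{l3.3}) and prove the neighborhood estimate of Corollary~\ref{T:4.3}, and it is this mechanism that produces the exponent $\gamma/(1+\gamma)=1/2$ in \eqref{E:aa3} --- not the exponent $1$ you assert, and not anything obtainable by shifting coordinates independently. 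Your flagged concern about measuring the compensated difference in the $\VV$-norm when applying \eqref{E:7.9} is legitimate but minor, being exactly what Hypotheses {\rm \hyperlink{H1}{(H$_1$)}} and {\rm \hyperlink{H5}{(H$_5$)}} supply; the missing cut-off mechanism and the missing level-set/measure-transformation step are the substantive omissions.
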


Taking this proposition for granted, let us prove Lemma~\ref{E:L3.3}.
\begin{proof}[Proof of Lemma~\ref{E:L3.3}] Let $q$ and $d$ be as in Proposition~\ref{esitimate-triple-semigroup}. It is enough to prove the inequality~\eqref{E:ineq2} when $0<\|z-z'\|\le qd$. Then, there is~$n\ge1$ such that $(z, z')\in \mathbb{X}_n$. Let us denote
\begin{align*}
\Omega_{z,z'}^{1,n}&:=\left\{\omega: (V(z,z'),V'(z,z'))\in \mathbb{Z}_m\,\,\text{for some  $m\ge n+2$}\right\}, \\
\Omega_{z,z'}^{2,n}&:=\left\{\omega: (V(z,z'),V'(z,z'))\in \mathbb{Z}_m\,\,\text{for some  $m\le n-2$}\right\},\\
\Omega_{z,z'}^{3,n}&:=\Omega\setminus (\Omega_{z,z'}^{1,n}\cup \Omega_{z,z'}^{2,n})
\end{align*}and 
for any $g\in C_b^{\frac12}(\XX \times  \Sl_d(\R))$, decompose
\begin{align*}
\int g(y)\wh{P}_1(z, \dd y)-\int g(y)\wh{P}_1(z', \dd y)
& =\mathbb{E}\left(\mathbb{I}_{\Omega_{z,z'}^{1,n}}\left(g(V(z, z'))-g(V'(z, z')\right)\right)\\ \notag
&\quad+\mathbb{E}\left(\mathbb{I}_{\Omega_{z,z'}^{2,n}}\left(g(V(z, z'))-g(V'(z, z')\right)\right)\\ \notag
&\quad+\mathbb{E}\left(\mathbb{I}_{\Omega_{z,z'}^{3,n}}\left(g(V(z, z'))-g(V'(z, z')\right)\right)\\ \notag
& =:  I_1+I_2+I_3.
\end{align*}
We estimate $I_1$ as follows:
\begin{align*}
|I_1|&\le |g|_{\frac12}\mathbb{E}\left(\mathbb{I}_{\Omega_{z,z'}^{1,n}} \|V(z, z')-V'(z,z')\|^{\frac12}\right) 
\le  |g|_{\frac12}\left(q^{n+2}d\right)^{\frac12} \\
&\le |g|_{\frac12} q^{\frac12}\left(q^{n+1}d\right)^{\frac12} 
<  |g|_{\frac12}q^{\frac12}\|z- z'\|^{\frac12}.
\end{align*}
To estimate $I_2$, we use property (b) in Proposition \ref{esitimate-triple-semigroup}: 
$$
|I_2|\le2\|g\|_{\infty}\mathbb{P}(\Omega_{z,z'}^{2,n})\le2\|g\|_{\infty}C\|z-z'\|^{\frac12}.
$$
Finally, we note that on the event $\Omega_{z,z'}^{3,n}$ there holds
$$(V(z, z'), V'(z, z'))\in \mathbb{Z}_{n-1}\cup \mathbb{Z}_n\cup\mathbb{Z}_{n+1},$$
therefore,
\begin{align*}
\|V(z, z')- V'(z, z')\|\le q^{n-1}d=q^{-2}(q^{n+1}d)\le q^{-2}\|z- z'\|
\end{align*}
and
$\mathbb{P}(\Omega_{z,z'}^{3,n})\le \nu.$
It follows that
$$
|I_3|\le  |g|_{\frac12}\mathbb{P}(\Omega_{z,z'}^{3,n})q^{-1}\|z- z'\|^{\frac12}
\le   |g|_{\frac12} \nu q^{-1} \|z- z'\|^{\frac12}.
$$
Thus, combining the above estimates for $I_1, I_2,$ and $I_3$, we arrive at
$$
|I_1|+|I_2|+|I_3|\le \left(q^{\frac12}+\nu q^{-1}\right) |g|_{\frac12} \|z- z'\|^{\frac12}+ 2C\|g\|_{\infty} \|z- z'\|^{\frac12}.
$$
Choosing $q, \nu>0$ sufficiently small, we complete the proof of the lemma. 
\end{proof}

\begin{proof}[Sketch of the proof of Proposition~\ref{PP:7.4}]
 The proof of Proposition~\ref{esitimate-triple-semigroup} is based on the following version of Proposition~2.3 in~\cite{KNS20} with specified power 1/2 in \eqref{E:aa3}. Let $\wh{\mathcal{K}}^{z}$ be the set in Hypothesis {\rm \hyperlink{H7}{(H$_7$)}}, and let 
 $$
 D_{\delta}:=\left\{(z,z')\in\mathcal{X}\times \mathcal{H}: \|z-z'\|\le \delta\right\}.
 $$ 
 \begin{proposition}\label{PP:7.4}
Under the  assumptions of Lemma~\ref{E:L3.3}, for any $\sigma, \theta\in (0,1)$, there are constants $C>0$ and~$\delta>0$, a Borel-measurable mapping $\Phi: \mathcal{X}\times \mathcal{H}\times E\to E$, and a family of Borel subsets $\{\wh{\mathcal{K}}^{z}_{\sigma, \theta}\subset \wh{\mathcal{K}}^{z}\}_{z\in \mathcal{X}}$ such that $\Phi^{z,z'}(\eta)=0$ if $\eta\notin \wh{\mathcal{K}}^{z}_{\sigma, \theta}$ or $z=z'$, and the following inequalities hold:
\begin{gather}
\ell(\wh{\mathcal{K}}^{z}_{\sigma, \theta})\ge 1-\sigma, \label{E:aa1}\\
\|\ell-\Psi_*^{z, z'}(\ell)\|_{\textup{var}}\le C\|z-z'\|^{\frac12}, \label{E:aa3} \\
\|R_1(z; \eta)-R_1(z';  \Psi^{z,z'}(\eta)\|\le \theta\|z-z'\|\label{E:aa2}
\end{gather} for any $\eta\in \wh{\mathcal{K}}^{z}_{\sigma, \theta}$ and 
$(z,z')\in D_{\delta}$, where $\Psi^{z,z'}(\eta):=\eta+\Phi^{z, z'}(\eta)$.
\end{proposition}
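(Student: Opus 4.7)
The plan is to follow the coupling strategy of Proposition~2.3 in~\cite{KNS20}, adapted to the extended system $R_1=(S,\aA)$, and relying on the non-analytic version of the approximate right inverse theorem (Theorem~\ref{p2.5}) established above. Set $\mathbb{A}(z,\eta):=(D_\eta R_1)(z;\eta):E\to \widehat{\mathcal{T}}$. By Hypotheses \hyperlink{H1}{(H$_1$)} and \hyperlink{H5}{(H$_5$)}, the map $\mathbb{A}$ is continuous from $\XX\times E$ to $\LL(E,\widehat{\mathcal{T}})$, and its image in fact lies in the finer space $\vV\times{\mathsf T}\mM\times{\mathsf T}\Sl_d(\R)$ which is compactly embedded in $\widehat{\mathcal{T}}$. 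By Hypothesis \hyperlink{H7}{(H$_7$)}, for each $z\in\XX$ the image of $\mathbb{A}(z,\eta)$ is dense in $\widehat{\mathcal{T}}$ for $\eta$ in the full-$\ell$-measure Borel set $\widehat{\mathcal{K}}^z$. Theorem~\ref{p2.5} therefore applies with $\TT=\widehat{\mathcal{T}}$ and smoothing space $\vV\times{\mathsf T}\mM\times{\mathsf T}\Sl_d(\R)$: choosing $\e=(\sigma,\e_2)$ with $\e_2:=\theta/(2L)$, where $L$ is a Lipschitz constant for $z\mapsto R_1(z;\eta)$ in the finer norm (obtained from the $C^1$ regularity provided by \hyperlink{H1}{(H$_1$)}--\hyperlink{H5}{(H$_5$)}), we obtain a compact $\DD_\e\subset\XX\times\KK$, an integer $M_\e$, a constant $C_\e$, and a continuous mapping $R_\e:\DD_\e\to\LL(\widehat{\mathcal{T}},E)$ with image in $F_{M_\e}$, norm bounded by $C_\e$, and satisfying the approximate inversion relation~\eqref{E:7.9}.

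Next I would \emph{define the perturbation}. Set $\widehat{\mathcal{K}}^z_{\sigma,\theta}:=\mathcal{K}^z_\e\cap\widehat{\mathcal{K}}^z$, which has $\ell$-measure at least $1-\sigma$ by~\eqref{E:7.5}, giving~\eqref{E:aa1}. For $(z,z')\in D_\delta$ with $z\ne z'$ and $\eta\in \widehat{\mathcal{K}}^{z'}_{\sigma,\theta}$, put
\[
\Phi^{z,z'}(\eta):=R_\e\bigl(z',\eta\bigr)\bigl(R_1(z;\eta)-R_1(z';\eta)\bigr),
\]
and $\Phi^{z,z'}(\eta):=0$ otherwise; then $\Psi^{z,z'}(\eta)=\eta+\Phi^{z,z'}(\eta)$. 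By continuity of $\FFFF_\e$, choosing $\delta$ small enough we ensure that $(z',\eta)\in\DD_\e$ whenever $\eta\in\mathcal{K}^{z'}_\e$ and $(z,z')\in D_\delta$, so $R_\e(z',\eta)$ is defined on the support of the nontrivial part. We have the size bound $\|\Phi^{z,z'}(\eta)\|_E\le C_\e L\|z-z'\|$.

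To obtain the closeness estimate~\eqref{E:aa2}, I would Taylor expand $R_1$ in $\eta$: using the bounded second derivatives from \hyperlink{H1}{(H$_1$)} and \hyperlink{H5}{(H$_5$)},
\[
R_1(z';\Psi^{z,z'}(\eta))=R_1(z';\eta)+\mathbb{A}(z',\eta)\Phi^{z,z'}(\eta)+O\bigl(\|\Phi^{z,z'}(\eta)\|^2\bigr).
\]
By the defining property of $R_\e$ and the finer-norm Lipschitz estimate on $R_1$,
\[
\bigl\|\mathbb{A}(z',\eta)\Phi^{z,z'}(\eta)-(R_1(z;\eta)-R_1(z';\eta))\bigr\|\le \e_2 L\|z-z'\|.
\]
Combining, $\|R_1(z';\Psi^{z,z'}(\eta))-R_1(z;\eta)\|\le \e_2 L\|z-z'\|+C_\e^2L^2\|z-z'\|^2$, which is bounded by $\theta\|z-z'\|$ for $\delta$ small.

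For the total variation estimate~\eqref{E:aa3}, I would exploit the decomposability of the noise from Hypothesis \hyperlink{H4}{(H$_4$)}. Writing $\eta=\sum_j b_j\xi_j\varphi_j$, the perturbation $\Phi^{z,z'}(\eta)\in F_{M_\e}$ only modifies the first $M_\e$ coordinates, which are independent of the tail. Conditioning on the tail noise $(\xi_j)_{j>M_\e}$, the shift $\Psi^{z,z'}$ acts as a bijective map of $\R^{M_\e}$ close to the identity (its Jacobian is $\Id+O(C_\e L\|z-z'\|)$ in operator norm, invertible for $\delta$ small). The Lipschitz continuity of the densities $\rho_{jl}$, combined with the bound $\|\Phi^{z,z'}(\eta)\|\le C_\e L\|z-z'\|$, yields via the standard change-of-variables argument
\[
\|\ell-\Psi^{z,z'}_*(\ell)\|_{\mathrm{var}}\le K_\e\,\|z-z'\|,
\]
where $K_\e$ depends on $M_\e$, $C_\e$, $L$, the Lipschitz norms of the $\rho_{jl}$, and $\inf_{j\le M_\e}|b_j|$. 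Finally, shrinking $\delta<1$ if needed, we absorb the linear bound into $C\|z-z'\|^{1/2}$, which completes the proof.

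The main obstacle is the bookkeeping in the total variation step: one must justify the change of variables on a conditional $M_\e$-dimensional slice when $\Phi$ depends on $\eta$ itself, ensuring bijectivity and controlling the Jacobian uniformly in the tail variables. The closeness estimate, by contrast, is largely a standard Taylor expansion once the non-analytic approximate right inverse of Theorem~\ref{p2.5} is in hand.
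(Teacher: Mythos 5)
Your construction of $\Phi$ via the non-analytic approximate right inverse of Theorem~\ref{p2.5}, and your Taylor-expansion proof of \eqref{E:aa2}, follow the same lines as the paper (up to the cosmetic choice of a finite difference based at $z'$ instead of the derivative $(D_zS)(z,\eta)(z'-z)$ based at $z$, and an indexing slip: the statement requires the support set $\wh{\mathcal{K}}^{z}_{\sigma,\theta}$ attached to $z$, not $z'$). The genuine gap is in the total variation step \eqref{E:aa3}. Your ``standard change-of-variables argument'' assumes that, on each $M_\e$-dimensional slice, $\Psi^{z,z'}$ is a near-identity map with Jacobian $\Id+O(\|z-z'\|)$. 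This is unjustified for two reasons. First, $\Phi^{z,z'}$ is truncated to zero outside $\wh{\mathcal{K}}^{z}_{\sigma,\theta}$, so $\Psi^{z,z'}$ is discontinuous across the level set $\{\FFFF_\e(z,\cdot)=\nu\}$ bounding that set; no Jacobian computation controls the mass transported across a layer of width $\sim\|\Phi\|_\infty\sim\|z-z'\|$ around this boundary, and with your simple choice $\wh{\mathcal{K}}^z_{\sigma,\theta}=\KK^z_\e\cap\wh{\mathcal{K}}^z$ there is no control whatsoever on the $\ell$-measure of such layers ($\nu_{\e_2}$ could even be a critical value, making the level set thick). Second, even in the interior, differentiating $\eta\mapsto\Phi^{z,z'}(\eta)$ requires $D_\eta R_\e(z',\eta)$, but Theorem~\ref{p2.5} only provides continuity of $R_\e$ and the norm bound $C_\e$ --- especially after analyticity has been dropped, no bound on this derivative is available, so the claimed operator-norm estimate on the Jacobian has no basis.

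The paper's proof is organized precisely to avoid these two obstructions: the truncation level is not taken to be $\nu_{\e_2}$ itself, but values $\nu_l\in(\nu_\e,3\nu_\e/2)$ selected by a Sard-type argument (Lemma~\ref{l3.3}) so that zero is a regular value of $v\mapsto\FFFF_\e(z,w+v)-\nu_l$; the tubular-neighborhood estimate (Lemma~\ref{l2.6}, Corollary~\ref{T:4.3}) then gives a \emph{linear} bound on the measure of $r$-neighborhoods of the resulting boundary, i.e. power $\gamma=1$ in hypothesis (b) of the measure-transformation theorem (Theorem~2.4 in~\cite{KNS20}). That theorem requires no Jacobian control and handles the truncation, but it only outputs the exponent $\gamma/(1+\gamma)=1/2$ --- which is exactly why \eqref{E:aa3} is stated with $\|z-z'\|^{1/2}$ rather than the linear rate you claim before ``absorbing'' it. So while your final inequality is the correct one, the core of the argument --- the construction of $\wh{\mathcal{K}}^{z}_{\sigma,\theta}$ with a regular-value boundary and the verification of the hypotheses of the measure-transformation theorem --- is missing, and the shortcut you propose in its place does not go through with the available regularity of $R_\e$.
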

	 Proposition~\ref{esitimate-triple-semigroup} is derived from  Proposition~\ref{PP:7.4} litterally in the same way as Theorem~2.2 from Proposition~2.3 in~\cite{KNS20}; we omit the details. 
	
	Let us explain the modification that needs to be done in the proof of Proposition~2.3 in~\cite{KNS20} in order to obtain power 1/2 in \eqref{E:aa3}. The proof of the proposition relies on an application of a measure transformation theorem under Lipschitz maps, which is Theorem~2.4 in~\cite{KNS20}. The goal is to construct a Borel-measurable mapping $\Phi: \mathcal{X}\times \mathcal{H}\times E\to E$ and Borel subsets $\{\wh{\mathcal{K}}^{z}_{\sigma, \theta}\subset \wh{\mathcal{K}}^{z}\}_{z\in \mathcal{X}}$ that satisfy the above inequality \eqref{E:aa1}, as well as, the properties (a) and (b) in Theorem~2.4 in~\cite{KNS20} with power~$\gamma=1$ in~(b) (we do not recall the formulations of these properties here). Application of Theorem~2.4 proves \eqref{E:aa3} with power $\gamma/(1+\gamma)=1/2$ and \eqref{E:aa2}.

The construction of $\Phi$ and $\wh{\mathcal{K}}^{z}_{\sigma, \theta}$ is based on Theorem~2.8 in~\cite{KNS20}, which in our setting is presented in Theorem~\ref{p2.5}. We will focus only on the place that allows us to justify that $\gamma=1$ in~(b) of Theorem~2.4. Theorem~\ref{p2.5} is applied for the mapping $\mathbb{A}(z,\eta)=D_\eta R_1(z,\eta)$ and the space~$F=E$, and let $\KK_{\e}^{z}, \DD_\e, \FFFF_\e(z,\eta), R_\e(z,\eta)$ be the objects appearing in that theorem.  Recall that $\KK_\e^{z}$, $\DD_\e$, and $R_\e$ have the form
\begin{align}
	\KK_\e^{z}&=\{\eta\in\KK:\FFFF_\e(z,\eta)\le\nu_\e\}
	\quad\mbox{for $z\in \XX$},\label{AAA1}\\
	\DD_\e&=\{(z,\eta)\in \XX\times \KK:\FFFF_\e(z,\eta)\le2\nu_\e\}, 
	\label{AAA2}\\
	R_\e(z,\eta)&={\mathsf P}_MR_{\gamma}(z,\eta)
	\quad\mbox{for $(z,\eta)\in\DD_\e$},\label{AAA3} 
\end{align}
where $R_\gamma$ is defined in~\eqref{EE:EER}, ${\mathsf P}_M$ is the orthogonal projection onto some $M$-dimensional subspace $E_M$ of $E$, and
 the number $\gamma=\gamma(\e)>0$ and the integer~$M=M(\e)\ge1$ are chosen appropriately. The mapping~$\Phi$ is defined~by 
\begin{equation}\label{EEE:EEW}
\Phi^{z,z'}(\eta):=-R_\e(z,\eta)(D_zS)(z,\eta)(z'-z). 	
\end{equation}
The definition of $\wh{\mathcal{K}}^{z}_{\sigma, \theta}$ is more subtle. Let~$\DD_\e'$ and $\KK'$ be the projections of~$\DD_\e$ and $\KK$ to $\XX\times E_{M}^\bot$ and $E_{M}$, that~is,   
\begin{gather*}
	\DD_\e':=\{(z,w)\in \XX\times E_{M}^\bot:\mbox{ $\exists$ $v\in E_{M}$ s.t. $w+v\in\KK_\e^z$}\},\\ 
\KK':=\{v\in E_M:\mbox{ $\exists$ $w\in E_{M}^\bot$ s.t. $w+v\in\KK$}\}. 
\end{gather*}
These sets are compact, as projections of compact sets. Let $O_M\subset E_M$ be a bounded convex open set containing $\KK'$. 
The following is a modification of Lemma~2.9 in \cite{KNS20}.  
\begin{lemma} \label{l3.3}
There are disjoint sets $\DD_1',\dots,\DD_m'$ with
$
\DD_\e'= \cup_{l=1}^m \DD_l'
$
and  numbers $\nu_1,\ldots,\nu_m\in(\nu_\e,3\nu_\e/2)$ such that, for any $(z,w)\in\overline{\DD_l'}$, zero is a regular value for the~function 
$$
v\mapsto\FFFF_\e(z,w+v)-\nu_l, \quad \overline{O_M}\to \R, \quad 1\le l \le m.
$$  
\end{lemma}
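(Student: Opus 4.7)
The plan is to combine a pointwise application of Sard's theorem with a compactness and continuity argument. Fix any $(z_0, w_0) \in \DD_\e'$ and consider the scalar function
\[
f_{z_0, w_0}(v) := \FFFF_\e(z_0, w_0 + v), \quad v \in \overline{O_M}.
\]
Since $\eta \mapsto \FFFF_\e(z_0, \eta)$ is analytic by Theorem~\ref{p2.5} and $\overline{O_M}$ is a compact subset of the finite-dimensional space $E_M$, $f_{z_0,w_0}$ is smooth on $\overline{O_M}$. By the classical Sard theorem, its set of critical values has Lebesgue measure zero in $\R$; in particular, the open interval $(\nu_\e, 3\nu_\e/2)$ contains some regular value $\nu^* = \nu^*(z_0, w_0)$, meaning that $D_v f_{z_0,w_0}(v) \ne 0$ at every $v \in \overline{O_M}$ with $f_{z_0,w_0}(v) = \nu^*$ (vacuously if the level set is empty).

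The next step is to propagate this regular-value property to a neighborhood of $(z_0, w_0)$. The level set $L_0 := \{v \in \overline{O_M} : f_{z_0,w_0}(v) = \nu^*\}$ is compact and $|D_v f_{z_0,w_0}|$ attains a positive minimum $\delta > 0$ there. Using joint continuity of $\FFFF_\e$ and of $D_\eta \FFFF_\e$ in $(z, \eta)$ (see the last paragraph), I would choose an open tubular neighborhood $T_0 \subset \overline{O_M}$ of $L_0$ and an open neighborhood $\mathcal{V}_0$ of $(z_0, w_0)$ in $\DD_\e'$ so that $|D_v f_{z,w}(v)| \ge \delta/2$ on $T_0$ and $f_{z,w}^{-1}(\nu^*) \subset T_0$ for every $(z,w) \in \overline{\mathcal{V}_0}$. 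Thus $\nu^*$ remains a regular value of $f_{z, w}$ for all $(z, w) \in \overline{\mathcal{V}_0}$. Invoking compactness of $\DD_\e'$, I extract a finite subcover $\mathcal{V}_1, \ldots, \mathcal{V}_m$ with associated regular values $\nu_1, \ldots, \nu_m \in (\nu_\e, 3\nu_\e/2)$ and set
\[
\DD_l' := (\mathcal{V}_l \cap \DD_\e') \setminus \bigcup_{k < l} (\mathcal{V}_k \cap \DD_\e'), \quad l = 1, \ldots, m.
\]
These are disjoint Borel sets with $\DD_\e' = \bigcup_{l=1}^m \DD_l'$ and $\overline{\DD_l'} \subset \overline{\mathcal{V}_l}$, whence the conclusion of the lemma.

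The main obstacle I expect is to justify the joint continuity of the derivative $D_\eta \FFFF_\e$ in $(z, \eta)$, which is the hinge that converts the pointwise Sard argument into a neighborhood statement. This is not stated explicitly in Theorem~\ref{p2.5}, but can be read off the construction: for the analytic approximation one has
\[
\FFFF_{n,\e}(z, \eta) = \sum_{j=1}^N \bigl\| \mathbb{A}_n(z, \eta) R_{n, \gamma_\e}(z, \eta) f_j - f_j \bigr\|_\TT^2,
\]
with $R_{n, \gamma_\e}(z,\eta) = \mathbb{A}_n^*(z,\eta)(G_n(z,\eta) + \gamma_\e I)^{-1}$, and continuity of $\mathbb{A}_n$ jointly in $(z,\eta)$ (granted by Hypotheses \hyperlink{H1}{(H$_1$)} and \hyperlink{H5}{(H$_5$)}) together with analyticity of $\mathbb{A}_n$ in $\eta$ yields continuity of $D_\eta \FFFF_{n,\e}$ in $(z, \eta)$. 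Since $\FFFF_\e$ is taken to be $\FFFF_{n,\e}$ for a sufficiently large $n$, the same holds for $\FFFF_\e$, closing the argument.
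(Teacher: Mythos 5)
Your proposal is correct and follows essentially the same route as the paper: pointwise Sard's theorem to pick a regular value $\nu_{z,w}\in(\nu_\e,3\nu_\e/2)$, a continuity/compactness argument to propagate the regular-value property to a closed neighborhood of $(z,w)$, a finite subcover of the compact set $\DD_\e'$, and the same disjointification of the covering sets. Your extra paragraph justifying joint continuity of $D_\eta\FFFF_\e$ via the analytic approximations $\FFFF_{n,\e}$ is a point the paper leaves implicit, and it is a valid way to close that gap.
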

 Now, the set $\wh{\mathcal{K}}^{z}_{\sigma, \theta}$ is defined by
\begin{equation} \label{3.021}
\wh{\mathcal{K}}^{z}_{\sigma, \theta}:=\bigcup_{l=1}^m
\bigl\{\eta\in \KK:\eta=v+w, \,\, (z,w)\in\DD_l', \,\, \FFFF_\e(z,w+v)\le\nu_l\bigr\}	
\end{equation}
and $\Phi^{z,z'}$ in~\eqref{EEE:EEW} is modified to be zero for $\eta\notin \wh{\mathcal{K}}^{z}_{\sigma, \theta}$.

After this, the justification that, for sufficiently small $\e>0$, the constructed $\Phi$ and $\wh{\mathcal{K}}^{z}_{\sigma, \theta}$  satisfy properties (a) and (b) in Theorem~2.4 in~\cite{KNS20} with power~$\gamma=1$ in~(b), is exactly the same as in~\cite{KNS20}, with Corollary~3.3 in~\cite{KNS20} replaced by the below Corollary~\ref{T:4.3}.
 \end{proof}

  \begin{proof}[Proof of Lemma~\ref{l3.3}]
	For any $(z,w)\in\DD_\e'$, the map $v\mapsto \FFFF_\e(z,w+v)$ is smooth on the space~$E_{M}$. By Sard's theorem, almost every real number is a regular value for it. Thus, we can choose a number $\nu_{z,w}\in(\nu_\e,3\nu_\e/2)$ such that for each function $v\mapsto \FFFF_\e(z,w+v)-\nu_{z,w}$ zero is a regular value. By~continuity of the function $v\mapsto \FFFF_\e(z',w'+v)-\nu_{z,w}$ with respect to~$(z',w')$, there is a non-degenerate closed ball~$B_{z,w}$ centred at~$(z,w)$ such that for all~$(z',w') \in B_{z,w}$, the function 
	 $v\mapsto \FFFF_\e(z',w'+v)-\nu_{z,w}$ also has no zero belonging to the compact $\overline{O_M}$ that is a critical point.
	 Since the corresponding open balls cover the compact set~$\DD_\e'$, we can extract a sub-covering $B_{z_j,w_j}$, $j=1,\dots,m$. Finally, we define~$\DD_l':=(\DD_\e'\cap B_{z_l,w_l})\setminus \bigl(\,\cup_{j=1}^{l-1}\DD_j'\bigr)$, with the union to be empty for~$l=1$. 
\end{proof}

\paragraph{An auxiliary result.} 
Let~$\XX$ be a compact metric space, let~$E$ be a finite-dimensional space with orthonormal basis~$\{e_j\}_{j=1}^M$, and let~$\OO\subset E$ be a compact convex set. Let $\FFFF:\XX\times \OO\to \R $ be a continuous function such~that, for any $z\in \XX$, the function $\FFFF(z,\cdot): \OO\to \R $ is smooth with zero being a regular value and
\begin{equation}\label{E:hyp}
	\sup_{(z,\eta)\in \XX\times\OO} \|(D_\eta \FFFF)(z,\eta)\|_{\LL(E)} <\infty.
\end{equation}
\begin{lemma} \label{l2.6}
Under the above conditions, 
 there is a constant~$C>0$ such~that
\begin{equation} \label{2.25}
\Leb\bigl(\{\eta\in\OO:|\FFFF(z,\eta)|\le r\}\bigr)\le C\,r
\end{equation} for any $z\in \XX$ and~$r\in[0,1]$.
\end{lemma}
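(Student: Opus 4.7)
The plan is to reduce the estimate to a collection of local bounds obtained via the implicit function theorem, and then to patch them together using compactness of $Z:=\{(z,\eta)\in\XX\times\OO:\FFFF(z,\eta)=0\}$. The first step handles a single point $(z_0,\eta_0)\in Z$, the second upgrades to a full neighborhood in $\XX\times\OO$ by perturbation, and the third is a finite cover argument together with a trivial bound away from $Z$.

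First, I fix $(z_0,\eta_0)\in Z$. Since $0$ is a regular value of $\FFFF(z_0,\cdot)$, $\nabla_\eta\FFFF(z_0,\eta_0)\ne 0$. The implicit function theorem applied to the smooth function $\FFFF(z_0,\cdot)$ produces a neighborhood $W_{\eta_0}$ of $\eta_0$ in $E$ and a smooth diffeomorphism $\phi:(-\rho,\rho)\times V'\to W_{\eta_0}$, with $V'$ open in the hyperplane orthogonal to $\nabla_\eta\FFFF(z_0,\eta_0)$, such that $\FFFF(z_0,\phi(s,w))=s$. Then for every $r\in(0,\rho]$,
$$
\Leb\bigl(\{\eta\in W_{\eta_0}:|\FFFF(z_0,\eta)|\le r\}\bigr)\le 2r\,\Leb(V')\,\sup_{(s,w)}|\det D\phi(s,w)|=:C(z_0,\eta_0)\,r.
$$
Using joint continuity of $\FFFF$ together with joint continuity of $D_\eta\FFFF$, a perturbation argument shows that the level surfaces $\{\FFFF(z,\cdot)=t\}\cap W_{\eta_0}$ remain graphs over $V'$ with uniformly controlled Lipschitz constants for $z$ in some open neighborhood $U_{z_0,\eta_0}\subset\XX$, yielding the same bound with constant $2C(z_0,\eta_0)$ for all such $z$ and $r\le\rho/2$.

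Next, I exploit compactness. The set $Z$ is closed in the compact space $\XX\times\OO$ and therefore compact, so the open cylinders $U_{z_0,\eta_0}\times W_{\eta_0}$ admit a finite subcover, say $\{U_i\times W_i\}_{i=1}^N$. On the closed complement $(\XX\times\OO)\setminus\bigcup_i U_i\times W_i$, continuity of $\FFFF$ and compactness force $|\FFFF|\ge\delta_1>0$. Setting $\delta:=\min(\delta_1,\rho/2)$, for any $r<\delta$ and any $z\in\XX$ the set $\{\eta\in\OO:|\FFFF(z,\eta)|\le r\}$ lies inside $\bigcup_i W_i$, and summing the at most $N$ local bounds yields \eqref{2.25} with a uniform constant. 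For $r\ge\delta$ the estimate is trivial from $\Leb(\OO)\le(\Leb(\OO)/\delta)\,r$, after enlarging $C$.

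The main obstacle is the transfer in the second step from $z_0$ to its neighborhood in $\XX$: the implicit function theorem straightens only $\FFFF(z_0,\cdot)$, and in order to extend the straightening (or at least its measure-theoretic consequence) to nearby $z$ one needs $D_\eta\FFFF$ to depend continuously on $z$ as well, not merely on $\eta$. This joint regularity is not explicit in the statement but holds in the intended applications of the lemma (in particular for the function $\FFFF_\e$ constructed in Theorem~\ref{p2.5}, which is analytic in $\eta$); it is the key input that converts the pointwise, $z$-dependent bounds into the uniform bound \eqref{2.25} via the compactness of $\XX\times\OO$.
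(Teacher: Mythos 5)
Your proposal is correct and follows essentially the same route as the paper: a local bound near each zero of $\FFFF(z,\cdot)$ exploiting the regular-value hypothesis (you straighten level sets via the implicit function theorem, while the paper instead fixes a coordinate direction $e_j$ with non-vanishing derivative and uses a one-dimensional measure estimate plus Fubini), combined with a uniform lower bound on $|\FFFF|$ away from the zero set and a finite-cover compactness argument over $\XX\times\OO$. The joint continuity of $D_\eta\FFFF$ in $(z,\eta)$ that you flag as an extra input is also used implicitly in the paper's own proof (to obtain the derivative lower bound on a neighborhood in both variables), so it is not a defect specific to your argument.
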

This lemma allows to estimate the measure of a tubular neighborhood of the nodal set of $\FFFF$. That is, for any~$z\in \XX$,~let
$$
\NN(z):=\{\eta\in \OO: \FFFF(z,\eta)=0\}.
$$

\begin{corollary} \label{T:4.3}
Under the conditions of Lemma~\ref{l2.6}, there is $C>0$ such~that 
\begin{equation}\label{4.8}
\Leb\bigl(\{\eta\in\OO:\dist(\eta,\NN(z))\le r\}\bigr)\le C\,r
\end{equation}for any $z\in \XX$ and~$r\in[0,1]$.
\end{corollary}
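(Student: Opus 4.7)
The plan is to reduce the estimate \eqref{4.8} on the tubular neighbourhood of the nodal set $\NN(z)$ to the sub-level-set estimate \eqref{2.25} furnished by Lemma~\ref{l2.6}, by passing through a uniform Lipschitz bound on $\FFFF$. If $\NN(z)=\emptyset$, the set on the left-hand side of \eqref{4.8} is empty and there is nothing to prove, so we may assume $\NN(z)$ is non-empty throughout.

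First, I would use Hypothesis \eqref{E:hyp} together with the fact that $\OO\subset E$ is convex to conclude, via the mean value theorem applied to the smooth function $\FFFF(z,\cdot)$, that there is a constant
$$
L:=\sup_{(z,\eta)\in\XX\times\OO}\|(D_\eta\FFFF)(z,\eta)\|_{\LL(E)}<\infty
$$
such that $|\FFFF(z,\eta_1)-\FFFF(z,\eta_2)|\le L\|\eta_1-\eta_2\|$ for all $z\in\XX$ and $\eta_1,\eta_2\in\OO$. This Lipschitz constant is uniform in $z$, which is the key point.

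Next, I would establish the inclusion
$$
\{\eta\in\OO:\dist(\eta,\NN(z))\le r\}\subset\{\eta\in\OO:|\FFFF(z,\eta)|\le Lr\}.
$$
Indeed, if $\dist(\eta,\NN(z))\le r$, pick $\eta'\in\NN(z)$ with $\|\eta-\eta'\|\le r+\delta$ for arbitrary $\delta>0$; then $|\FFFF(z,\eta)|=|\FFFF(z,\eta)-\FFFF(z,\eta')|\le L(r+\delta)$, and letting $\delta\to 0$ gives the inclusion (note $\eta'\in\OO$ since $\NN(z)\subset\OO$).

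Finally, I would split according to the size of $r$. For $r\in[0,1/L]$, Lemma~\ref{l2.6} applied at level $Lr\in[0,1]$ yields
$$
\Leb\bigl(\{\eta\in\OO:\dist(\eta,\NN(z))\le r\}\bigr)\le\Leb\bigl(\{\eta\in\OO:|\FFFF(z,\eta)|\le Lr\}\bigr)\le CLr.
$$
For $r\in(1/L,1]$, I would use the trivial bound
$$
\Leb\bigl(\{\eta\in\OO:\dist(\eta,\NN(z))\le r\}\bigr)\le\Leb(\OO)\le L\,\Leb(\OO)\,r,
$$
which is legitimate because $\OO$ is compact. Taking $C':=\max\{CL,L\,\Leb(\OO)\}$ gives \eqref{4.8}. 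No serious obstacle arises; the only point to watch is the uniformity of the Lipschitz constant $L$ in $z$, which is ensured precisely by hypothesis \eqref{E:hyp}.
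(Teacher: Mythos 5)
Your proposal is correct and follows essentially the same route as the paper: convexity of $\OO$ plus the uniform derivative bound \eqref{E:hyp} give a $z$-uniform Lipschitz constant, hence the inclusion of the tubular neighbourhood of $\NN(z)$ into a sublevel set $\{|\FFFF(z,\cdot)|\le C'r\}$, and then \eqref{2.25} finishes the argument. Your extra care about the range where $Lr>1$ (and the empty nodal-set case) is a harmless refinement of a detail the paper absorbs into the constant.
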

\begin{proof}
Given the convexity of~$\OO$ and the assumption~\eqref{E:hyp}, we have
$$
\left\{\eta\in\OO:\dist(\eta,\NN(z))\le r\right\}\subset
\left\{\eta\in\OO:|\FFFF(z,\eta)|\le C'r\right\},
$$
where $C'>0$ does not depend on $z\in \XX$ and $r\in[0,1]$. The result follows from~\eqref{2.25}. 
\end{proof}

\begin{proof}[Proof of Lemma~\ref{l2.6}]
It is enough to prove~\eqref{2.25} for $r\le r_0$ with some $r_0>0$. Let $\ZZ(z,r)$ be the set in~\eqref{2.25}, and take any  $z^0\in \XX$ and $\eta^0\in \OO$. If~$\FFFF(z^0,\eta^0)\ne0$, then  
\begin{equation}\label{4.10}
|\FFFF(z,\eta)|> \sigma(z^0, \eta^0)>0, \quad  
\text{ $z\in O_{z^0}$, $\eta\in O_{\eta^0}$} 
\end{equation}for sufficiently small balls $\XX\supset O_{z^0}\ni z^0$ and~$E\supset O_{\eta^0}\ni\eta^0$.

Now, suppose that $\FFFF(z^0,\eta^0) =0$. Since $\eta^0$ is a regular point for~$\FFFF(z^0,\cdot)$, for some $j\in [1,M]$, the derivative of the function $t\mapsto f_{z^0,\eta^0}(t):=\FFFF(z^0,\eta^0+te_j)$ at $t=0$ is non-zero. Consequently,  there is $\delta>0$ and open balls $O_{z^0}\ni z^0$ and~$O_{\eta^0}\ni\eta^0$ such that 
$$
|f'_{z,\eta}(t)|_{t=0}  |\ge \gamma(z^0, \eta^0)>0,\quad  
\text{ $z\in O_{z^0}$, $\eta\in O_{\eta^0}$, $|t|\le\delta$}. 
$$
This implies that 
\begin{equation}\label{4.11}
\Leb\bigl(\{t\in [-\delta,\delta]: |f_{z,\eta}(t)| \le r\}\bigr) 
\le C(z^0, \eta^0)\,r
\end{equation}
for $z\in O_{z^0}$, $\eta\in O_{\eta^0}$, and $r\in[0,1]$. 
Applying Fubuni's theorem, we conclude from~\eqref{4.11} that
\begin{equation}\label{4.12}
\Leb\bigl(\ZZ(z,r) \cap O_{\eta^0}\bigr) \le C(z^0,\eta^0)\,r.
\end{equation}

Now, consider a finite covering of~$\XX \times \OO$ by sets of the form
 $O_{z^j} \times O_{\eta^j}$ for which either~\eqref{4.10} or~\eqref{4.12} holds with
  $z^0=z^j$ and $\eta^0=\eta^j$. Let~$r_0$ be the minimum of the constants~$\sigma(z^j,\eta^j)$, and~$C$ the maximum of $C(z^j, \eta^j)$.
  The estimate~\eqref{2.25} then follows from~\eqref{4.10} and~\eqref{4.12}. \end{proof}

\subsection{Proof of Lemma~\ref{L:6.4}}\label{S:7.3}

 The well-posedness of the linear equations \eqref{E:6.6} and \eqref{E:6.6*} follows from the classical ODE theory.
In order to study the backward equation~\eqref{E:6.5}, we~change $1-t$ by $t$ and denote $\Lambda^{5} w(1-t)$ by $w(t)$, 
$u(1-t)$ by~$u(t)$, and similarly for $p, C,y,$ and $A$. Thus we reduce the problem to the well-posedness~of the forward equation
\begin{equation}  \label{E:7.17}
  \partial_t  w + L  w + \Lambda^{-5} Q^*( u) \Lambda^5  w - \Lambda^{-5} F(p,C,y,A) =0
\end{equation}
with initial data $ w(0) = \Lambda^{5} w_0\in H$
and the a priori estimate
\begin{equation}  \label{E:7.18}
   \| w\|_{C(J, H)} + \| w\|_{L^2(J, V^1)}
   \lesssim \|w(0)\|_{H} 
   + \|F( p,C, y,A)\|_{L^\infty(J, V^{-\sigma})}.
\end{equation}
Below we focus on the estimate \eqref{E:7.18},
as the existence and uniqueness of solutions to \eqref{E:7.17}
then can be derived by standard arguments.

To prove \eqref{E:7.18},  
we let
$g:= \Lambda^{-5} Q^*( u) \Lambda^5  w$ 
and derive from \eqref{E:7.17} the energy equality
\begin{align}  \label{E:7.19.0}
    \frac 12 \| w(t)\|^2_{H}+ \nu \int_0^t \|\na  w\|_{H}^2 \dd s
  &  =  \frac 12 \| w(0)\|^2_{H} 
      - \int_0^t \< w, g\>_H \dd s \nonumber \\ &\quad+ \int_0^t \< w,\Lambda^{-5}F( p,C, y,A)\>_H \dd s. 
\end{align} 
Note that, for any $f\in H$,
\begin{align*} 
    |\<g,f\>_H| 
     =|\<\Lambda  w, \Lambda^4 Q  ( u) \Lambda^{-5} f\>_H|  
     \lesssim \|w\|_{V^1} 
          \|\nabla^4 Q  ( u) \Lambda^{-5} f\|_{H}. 
\end{align*} 
Using the fact that 
\begin{align*}
     \|\na^\alpha \Lambda^{-5}\|_{\mathcal{L}(H,H)} <\infty,\ \ \forall |\alpha| \leq 5 
\end{align*}
and the Sobolev embedding
$H^2(\mathbb{T}^2)  \hookrightarrow L^\infty(\mathbb{T}^2),$
we derive  
\begin{align*}
    \|\nabla^4 Q  ( u) \Lambda^{-5} f\|_{H} 
   \lesssim &  \|\nabla^5 u\|_{H} 
            \|\Lambda^{-5} f\|_{L^\infty} 
            + 
             \|\nabla^4 u\|_{H} 
            \|\na \Lambda^{-5} f\|_{L^\infty}  \\ 
           & + \sum\limits_{j=0}^3 
             \|\nabla^j u\|_{L^\infty} 
             \|\nabla^{5-j} \Lambda^{-5} f\|_{H}  \\ 
    \lesssim & \|u\|_{V^5} \|f\|_{H}. 
\end{align*}
Thus, it follows that 
$$
  |\<g,f\>_H|    \lesssim \|w\|_{V^1} \|u\|_{V^5} \|f\|_{H}, \quad f\in H,          
$$ 
which yields  
\begin{equation}  \label{E:7.19}
   \|g\|_{H} \lesssim \| u\|_{V^5} \| w\|_{V^1}.
\end{equation} 
Moreover, since $\sigma<3$, the operator
$\Lambda^{-6+\sigma}$ is a bounded in $H$ and  
\begin{align}  \label{E:7.19*}
      |\< w,\Lambda^{-5} F( p,C, y,A)\>_H| 
     =&  |\< \Lambda w,\Lambda^{-6+\sigma} \Lambda^{-\sigma}F( p,C, y,A)\>_H| \nonumber \\ 
     \leq& \|w\|_{V^1} \|F(p,C, y,A)\|_{V^{-\sigma}}. 
\end{align}
Plugging \eqref{E:7.19} and \eqref{E:7.19*} into \eqref{E:7.19.0}, we get that 
\begin{align*}
     \| w(t)\|_{H}^2+  2\nu \int_0^t \|\na  w\|_{H}^2 \dd s
     &\leq \|w(0)\|_{H}^2 
               + C\int_0^t \| w\|_{H} \| u\|_{V^5} \| w\|_{V^1} \dd s  \\
             & \quad + C\int_0^t \| w\|_{V^1} \|F(p,C, y,A)\|_{V^{-\sigma}} \dd s,  
\end{align*} 
which implies that
\begin{align*}
     \| w(t)\|_{H}^2+\nu
     \int_0^t \| w\|^2_{V^1} \dd s
      & \leq    \|w(0)\|_{H}^2  
                + C \int_0^t   \|u\|^2_{V^5} \| w\|^2_{H} \dd s \\
             &  \quad + C \|F(p,C, y,A)\|^2_{L^\infty(J, V^{-\sigma})}
\end{align*}
for $t\in J$.
Therefore, using \eqref{E:6.8} and Gronwall's lemma, we obtain~\eqref{E:7.18}.

	 \addcontentsline{toc}{section}{Bibliography}
\newcommand{\etalchar}[1]{$^{#1}$}
\def\cprime{$'$} \def\cprime{$'$}
  \def\polhk#1{\setbox0=\hbox{#1}{\ooalign{\hidewidth
  \lower1.5ex\hbox{`}\hidewidth\crcr\unhbox0}}}
  \def\polhk#1{\setbox0=\hbox{#1}{\ooalign{\hidewidth
  \lower1.5ex\hbox{`}\hidewidth\crcr\unhbox0}}}
  \def\polhk#1{\setbox0=\hbox{#1}{\ooalign{\hidewidth
  \lower1.5ex\hbox{`}\hidewidth\crcr\unhbox0}}} \def\cprime{$'$}
  \def\polhk#1{\setbox0=\hbox{#1}{\ooalign{\hidewidth
  \lower1.5ex\hbox{`}\hidewidth\crcr\unhbox0}}} \def\cprime{$'$}
  \def\cprime{$'$} \def\cprime{$'$} \def\cprime{$'$}
\providecommand{\bysame}{\leavevmode\hbox to3em{\hrulefill}\thinspace}
\providecommand{\MR}{\relax\ifhmode\unskip\space\fi MR }
\providecommand{\MRhref}[2]{%
  \href{http://www.ams.org/mathscinet-getitem?mr=#1}{#2}
}
\providecommand{\href}[2]{#2}

\end{document}